\DeclareMathOperator{\prox}{Prox}
\newtheorem{ass}[theorem]{Assumption}
\newcommand{\h}[1]{\mathbf{#1}}
\newcommand{\nn}{\nonumber}
\newcommand{\R}{\mathbb{R}}
\DeclareMathOperator*{\dom}{dom}
\DeclareMathOperator*{\inte}{int}
\begin{document}

\title{A Single-loop Proximal Subgradient Algorithm  for A Class  Structured  Fractional Programs\thanks{This work was supported by the Ministry of Science and Technology of China (No.2021YFA1003600), and the NSFC grants 12131004,
12471289, and the Jiangsu University QingLan Project.
}}
%\subtitle{Do you have a subtitle?\\ If so, write it here}

%\titlerunning{Short form of title}        % if too long for running head

\author{ {\bf \normalsize Deren Han}\thanks{School of Mathematical Sciences, Beihang University, Beijing 100191, P.R. China. Email: \texttt{\color{blue}handr@buaa.edu.cn}} \and
	{\bf \normalsize Min Tao}\thanks{School of Mathematics, National Key Laboratory for Novel Software Technology, Nanjing University, Nanjing 210093, P.R. China. Email: \texttt{\color{blue}taom@nju.edu.cn}} \and
	{\bf \normalsize Zihao Xia}\thanks{School of Mathematics, Nanjing University, Nanjing 210093, P.R. China. Email: \texttt{\color{blue}zihaoxia@smail.nju.edu.cn}}
}

%\authorrunning{Short form of author list} % if too long for running head

%\institute{}

\date{Received: date / Accepted: date}
% The correct dates will be entered by the editor

\maketitle

\begin{abstract}

In this paper, we investigate a class of nonconvex and nonsmooth fractional programming problems, where the numerator composed of two parts: a convex, nonsmooth function and a differentiable, nonconvex function, and the denominator consists of a convex, nonsmooth function composed of a linear operator. These structured fractional programming problems have broad applications, including CT reconstruction, sparse signal recovery, the single-period optimal portfolio selection problem and standard Sharpe ratio minimization problem.
We develop a single-loop proximal subgradient algorithm that alleviates computational complexity by decoupling the evaluation of the linear operator from the nonsmooth component. We prove the global convergence of the proposed single-loop algorithm to an exact lifted stationary point under the Kurdyka-\L ojasiewicz assumption. Additionally, we present a practical variant incorporating a nonmonotone line search to improve computational efficiency. Finally, through extensive numerical simulations, we showcase the superiority of the proposed approach over the existing state-of-the-art methods for three applications: $L_{1}/S_{\kappa}$ sparse signal recovery, limited-angle CT reconstruction, and optimal portfolio selection.

\noindent\textbf{keywords:} fractional programming,  single-loop,
	convergence analysis, decoupling

 \end{abstract}

\section{Introduction}
In this paper, we consider the following class of nonsmooth and  nonconvex fractional programs:
\begin{eqnarray}\label{PForm} \min_{\h x\in{\cal S}} F({\h x}):=\frac{g({\h x})+h({\h x})}{f({K}{\h x})}, \end{eqnarray}
where $g:{\mathbb R}^m\rightarrow{\overline{\mathbb R}}$,  $f:{\mathbb R}^p\rightarrow{\overline{\mathbb R}}$ are proper, convex, lower semicontinuous functions, $h:{\mathbb R}^n\rightarrow{{\mathbb R}}$ is a (possibly nonconvex) differentiable function over an open set containing ${\cal S}$  and its gradient is Lipschitz continuous with a  Lipschitz constant $L_{\nabla h}$. Additionally, ${K}: {\mathbb R}^{n}\rightarrow {\mathbb R}^p$ is linear operator. Furthermore, to ensure that (\ref{PForm}) is well-defined, we also require $g({\h x})+h({\h x})\geqslant0$ and $f({K}{\h x}) > 0$ for all $\h x\in\mathcal{S}$.

Model (\ref{PForm})  captures a range of optimization problems across diverse areas, such as the limited-angle CT reconstruction problem \cite{WTNL}, the first phase of single-period optimal portfolio selection problem involving a risk-free asset \cite{pang1980parametric}, the minimization of the Sharpe ratio \cite{CHZ11}, and the scale-invariant sparse signal reconstruction problem \cite{Tao22, ZengYuPong20,LSZ}. Here we provide several specific examples to illustrate the inherent versatility of the model (\ref{PForm}).
\begin{itemize}
\item[(a)]  {\bf $L_{1}/S_{\kappa}$ sparse signal recovery.}
Considering the following sparse signal recovery with the sparsity-promoting regularizer $L_{1}/S_{\kappa}$ \cite{LSZ}:
\begin{equation}
	\min_{\h x\in\mathcal{S}}\ \frac{\lambda||\h x||_{1} + \frac{1}{2}||A\h x - \h b||_{2}^{2}}{||\h x||_{(\kappa)}} \label{L1oLK}
\end{equation}
where ${\h x}\in\mathbb R^n$ and for a fix $\kappa\in[n]$, $||\h x||_{(\kappa)}$ represents the sum of the absolute values of the largest $\kappa$ elements. The matrix $A$ is usually generated by Gaussian distribution or oversampled discrete cosine transform, and $\h b$ is the observation vector and
$\mathcal{S} = [{\h c},{\h d}]^{n}$.
By setting $h(\h x) = \frac{1}{2}||A\h x - \h b||_{2}^{2}$, $g(\h x) = \lambda||\h x||_{1}$, $f(\h x) = ||\h x||_{(\kappa)}$, $K =  I$, (\ref{L1oLK}) falls within the framework described in (\ref{PForm}).

\item[(b)] {\bf Limited-angle CT reconstruction}. Consider the following CT reconstruction problem model \cite{boct2023full}:
\begin{equation}
	\min_{{\h x}\in{\cal S}}\ \frac{\lambda||\nabla\h x||_{1} + \frac{1}{2}||A\h x - \h b||_{2}^{2}}{||\nabla\h x||_{2}} \label{CT}
\end{equation}
%In \ref{CT} $\h x$ denotes a 2D image defined on $m\times n$ Cartesian grid, and we adopt a linear index for it, i.e., $\h x_{ij}$ is the ((i-1)*m + j)'th component of $\h x$. The operator A is generated as the discrete Radon transform with the same resolution as $\h x$. $\nabla$ denotes the discrete gradient operator:
where $\nabla: \mathbb{R}^{n\times n}\rightarrow \mathbb{R}^{n\times n} \times \mathbb{R}^{n\times n}$ is the discrete gradient operator and the linear operator $A $ is the discrete Radon transform. Regarding the box constraint  ${\cal S} := [{\h c}, {\h d}] \subseteq \mathbb{R}^{n \times n}$, representing the box constraint on the reconstructed image \cite{WTNL}, we assume that ${\cal S} \cap {\text{span}}({\bf E}) =\emptyset$, where ${\bf E}$ is the matrix with all entries equal to one.
Problem (\ref{CT}) can be considered as a special case of the model (\ref{PForm}) by setting $h(\h x) = \frac{1}{2}||A\h x - \h b||_{2}^{2},\ g(\h x) = \lambda||\nabla\h x||_{1}, \ f(\h x) = ||\h x||_{2},\ K = \nabla$.

%(2) Optimal Portfolio Selection Problem. The single-period optimal portfolio selection problem is the following: An investor wishes to invest his wealth in certain risky assets, each of which has a constant scale of return that is a random variable. The objective of the investor is to maximize his expected utility of wealth subject to his budget constraint. The problem can be solved by a two-stage procedure, and the first stage solves a fractional program:

%(3) {\bf The single-period optimal portfolio selection problem}. The single-period optimal portfolio selection problem can be described as follows: An investor aims to allocate his wealth among various risky assets, each characterized by a consistent rate of return that is a stochastic variable. The investor's goal is to maximize the expected utility of their wealth while adhering to a budget constraint. The problem can be solved by a two-stage procedure, and the first stage solves a fractional program:
\item[(c)] {\bf The single-period optimal portfolio selection problem}. The single-period optimal portfolio selection problem \cite{pang1980parametric} can be solved by a two-stage procedure, and the first stage amounts to solving the following problem:

\begin{equation}
	\begin{aligned}
		&\min_{\h x\in S}\ \frac{\h x^{\top}V\h x}{{\mu}^{\top}\h x}\\
		& {\cal S} = \{{\h x}  : \h e^{\top}\h x = 1,\h 0 \leqslant \h x \leqslant \h d\}
	\end{aligned}\label{OPS}	
\end{equation}
where ${\h e}=(1,\ldots,1)^\top \in \mathbb{R}^n$, $\h d \in\mathbb{R}^{n}, {\mu}\in\mathbb{R}^{n}$ with ${\mu}^{\top}\h x > 0$ for all $\h x\in \mathcal{S}$. The matrix $V\in\mathbb{R}^{n\times n}$ is positive definite.
If we set  $h(\h x) = \h x^{\top}V\h x,\ g(\h x) = 0,\ f(\h x) = \h x, \ K = {\mu}^{\top},\ {\cal S} = \{{\h x} : \h e^{\top}\h x = 1, \h 0 \leqslant \h x \leqslant \h d\}$, (\ref{OPS}) can also be encompassed by the model (\ref{PForm}).

\item[(d)] {\bf Standard Sharpe ratio minimization problem}. The standard Sharpe ratio optimization problem \cite{CHZ11} commonly encountered in finance can be formulated as follows:
\begin{equation}
\begin{aligned}
	\min_{\h x\in S}&\ \frac{r - \h a^{\top}\h x}{(\h x^{\top} C\h x)^{1/2}}\\
	\cal{S} = &\{\h x :\ \h e^{\top}\h x = 1, \h x\geqslant \h 0\},
\end{aligned}	\label{SSR}	
\end{equation}
where ${\h e}=(1,\ldots,1) \in \mathbb{R}^n$, $({\h a}, r)\in{\mathbb R}^n\times {\mathbb R}$, are such that $r-{\h a}^\top {\h x} \ge 0$ for all $\h x \in {\cal{S}}$, and $C$ is positive definite matrix.
Problem (\ref{SSR}) can be written as a special case of (\ref{PForm}) if we take $h(\h x) = r - \h a^{\top}\h x, \ g(\h x) = 0, \ f(\h x) = ||\h x||,\ {\cal S} = \{{\h x} : \h e^{\top}\h x = 1, \h x \geqslant \h 0\}$, and $K$ is the Cholesky decomposition matrix of $C$ such that
$C=K^\top K$.

\end{itemize}
A conventional method for nonlinear fractional programming is Dinkelbach's method \cite{dinkelbach1967nonlinear, IB83}. In each iteration of Dinkelbach's method, the problem reduces to solving a difference-of-convex (DC) programming subproblem,

\begin{eqnarray}\label{DCi}
\min_{\h x} \left[g(\h x) + h(\h x) - \theta_k f(K{\h x})\right],
\end{eqnarray}
where \(\theta_k\) is updated at each iteration via:
\[
\theta_k := \frac{g(\h x^{k+1}) + h(\h x^{k+1})}{f(K{\h x}^{k+1})}.
\]

\noindent Solving the problem in each iteration of (\ref{DCi}) is challenging. In \cite{BC17}, Bot and Csetnek studied the single-ratio fractional programming problem with a convex, lower semicontinuous numerator, and a smooth concave or convex denominator. They proposed a proximal-gradient algorithm, which converges to the global optimum for a concave denominator and to critical points for a convex denominator under the Kurdyka-\L ojasiewicz (KL) property.

Building on this, Bot, Dao, and Li \cite{BDL} extended the approach by considering a composite numerator (nonsmooth, nonconvex combined with smooth, convex) and a weakly convex denominator. They introduced an extrapolated proximal subgradient algorithm (e-PSG). Their
 extrapolation parameter is sufficiently general to include those commonly used in the FISTA algorithms \cite{beck2017first} and established sequential convergence under a KL-satisfying merit function.

More recently, \cite{LSZ} and \cite{ZLSIAM} further advanced the problem. In \cite{ZLSIAM}, they proposed a proximal gradient-subgradient method with nonmonotone line search for the single-ratio fractional programming problem where the
numerator consists of a proper lower semicontinuous function and a smooth, nonconvex function,
 with a convex denominator. In \cite{LSZ}, a method with backtracked extrapolation (PGSA\_BE) was developed for solving (\ref{PForm}) with $K=I$. Additionally, \cite{BDL23} introduced an inertial proximal block coordinate method for nonsmooth sum-of-ratios problems. Convergence to critical points was established in all these approaches under functions satisfying the KL property.

The approaches mentioned above focus on solving the fractional programming problem that doesn't involve {\it a linear operator composed of nonsmooth convex} functions in the denominator. With one exception, Bot, Li, and Tao \cite{boct2023full}
considered the single-ratio fractional programming problem where
 the sum of a convex, possibly nonsmooth function composed with a linear operator and a differentiable, possibly nonconvex function in the numerator and a convex, possibly nonsmooth function composed with a linear operator in the denominator.
 In \cite{boct2023full}, they proposed a single-loop, fully split proximal subgradient algorithm with an extrapolated step, which uses a backtracking technique to ensure the involved fractional merit function's nonnegativity automatically. \cite{boct2023full} proved subsequential convergence toward an approximate lifted stationary point and established global
  convergence under the KL property. Furthermore, \cite{boct2023full} explained the rationale behind striving for an approximate lifted stationary point by constructing a series of counterexamples to demonstrate that seeking exact stationary solutions might lead to divergence.
  For extensive discussions on fractional programming,  we refer the reader to the monographs \cite{CuiPang} and \cite{SS95}.

The theoretical results in \cite{boct2023full} are interesting, and they are indeed tight in the sense that solving the composition of a double linear function with a nonsmooth convex function, both in the denominator and numerator, using a single-loop full-splitting algorithm with a global convergence guarantee under the KL assumption can only converge to an approximate lifted stationary point. Then, for the problem (\ref{PForm})  considered in this paper, where the linear operator appears only in the denominator of the objective function, a natural question arises:

\textit{ Can we develop a single-loop full-splitting algorithm for (\ref{PForm}) that converges to an exact lifted stationary point?}

In this paper, we answer this question affirmatively. Inspired by \cite{banert2019general,boct2023full}, we propose a single-loop fully splitting proximal subgradient algorithm. Additionally, we introduce a relaxation step in the algorithm to accelerate its convergence. We prove the subsequential convergence of the proposed algorithm to an exact lifted stationary point and establish its global convergence under a suitable merit function with the KL property.
Furthermore, we propose a practical version of our algorithm by employing a nonmonotone line search strategy. This version allows for larger step sizes and reduces the dependence on the unknown parameter $L_{\nabla h}$, thereby improving convergence speed.

The rest of the paper is organized as follows: In Section \ref{pre}, we provide fundamental definitions and preliminary results,
 introduce different stationary point notions, and state the necessary assumptions. Section \ref{newalgo} presents a single-loop fully-splitting proximal subgradient algorithm with a relaxation step  and establishes
 the subsequential convergence analysis. In Section \ref{sec6}, we prove the global convergence of the proposed algorithm by assuming
 some merit functions satisfy the KL property. In Section \ref{num}, we present the practical version of our algorithm and conduct extensive numerical experiments on three specific application problems to illustrate its efficiency. Section \ref{con} contains our conclusions.

\section{Preliminaries}\label{pre}
Let bold letters denote vectors, e.g., ${\bm x} \in \mathbb{R}^{n}$. Given two vectors ${\bm x}, {\bm y} \in \mathbb{R}^{n}$, $\langle {\bm x}, {\bm y} \rangle$ denotes their standard inner product: $\sum_{i=1}^n x_i y_i$. The notation $\|\bm{x}\|_p$ refers to the $p$-norm, defined as
$
\|\bm{x}\|_p = \left(\sum_{i=1}^n |x_i|^p \right)^{1/p}
$
for $0 < p < \infty$. The subscript $p$ in $\|\cdot\|_p$ is omitted when $p=2$. Define $[n]:=\{1, 2, \ldots, n\}$.
For a fixed $\kappa\in[n]$, $||{\h x}||_{(\kappa)}$ denotes the largest-$\kappa$ norm, i.e., the sum of the absolute values of the largest $\kappa$ elements. Let $r > 0$, $B({\h x}, r)$ denotes the ball centering at ${\h x}$ with radius $r$. The notations of ${\text{ri}}(\cdot)$ and ${\text{int}}(\cdot)$ denote the relative interior and the interior of a set.
%For any matrix $A$, we use $||A||_{1}$ to denote the sum of the absolute values of all elements in $A$ and $||A||_{\text{F}}$ be the Frobenius norm.

Define the extended real line ${\overline{\mathbb R}}={\mathbb R}\cup\{+\infty\}$.
For a function $f:{\mathbb R}^n\rightarrow{\overline{\mathbb R}}$, the domain of $f$ is defined by $\text{dom}f = \{\h x :\ f(\h x) < +\infty\}$.
We call a function $f$ is ${\cal C}^1$ over an open set, which means that it is differentiable with Lipschitz continuous gradient over an open set.
Given a closed convex set ${\cal C}$, we use $\iota_{\cal C}(\cdot)$ to denote the indicator function of ${\cal C}$.
For a function $f:{\mathbb R}^n\rightarrow{\overline{\mathbb R}}$
and finite at $\overline{\h x}$. The set

\begin{equation*}
		{\hat\partial} f({\overline{\h x}})=\left\{{\h v}\ : {\mathop{\varliminf}\limits_{\h x\to{\overline{\h x}}\;{\h x}\neq {\overline{\h x}}}} \frac{f(\h x)-f(\overline{\h x})-\langle {\h v},{\h x}-{\overline{\h x}}\rangle}{\|{\h x}-\overline{\h x}\|}\ge 0\right\},
\end{equation*}
is called a Fr\'{e}chet subdifferential of $f$ at $\overline{\h x}$. Its elements are called Fr\'{e}chet subgradients.
%If ${\hat\partial} f({\overline{\h x}})\neq\emptyset$, then $f$ is  lower semicontinuous at $\overline{\h x}$ \cite{KAY}.
The limiting subdifferential $\partial f(\overline{\h x})$   is defined as
\begin{equation*}
		\partial f({\overline{\h x}}):=\left\{{\h v}\; : \;\exists\; {\h x}^k\rightarrow {\overline{\h x}},\;f({\h x}^k)\rightarrow f(\overline{\h x}),
 {\h v}^k\in{\hat\partial} f({\h x}^k), {\h v}^k\rightarrow{\h v} \right\}.
		\end{equation*}
Obviously, $\hat\partial f({\overline{\h x}})\subseteq \partial f({\overline{\h x}})$ for all ${\overline{\h x}}\in\mathbb R^n$.
$\hat\partial f({\overline{\h x}})$ is closed and convex set while  $\partial f({\overline{\h x}})$ is closed \cite[Theorem 8.6]{RockWets}.
If $f$ is convex function, $\hat\partial f({\overline{\h x}})=\partial f({\overline{\h x}})$.

Let $f: {\mathbb R}^n\rightarrow (-\infty,+\infty]$. Its convex  conjugate of $f^*$ is defined by
$$f^*({\h z})=\sup_{\h x} \{\langle{\h x},{\h z} \rangle -f({\h z}) \}.$$

\noindent Given a proper lower semicontinuous function  $f$, we define  its  proximal mapping  \cite[Definition 1.22]{RockWets}:
 $$\prox_{f,\;\kappa}({\h v}) = \arg\min_{\h x} \Big\{ f({\h x}) + \frac{1}{2\kappa}\|{\h x}-{\h v}\|^2\Big\}.$$
  When $f({\h x})=\|{\h x}\|_1$, its proximal operator has a closed-form solution and characterized by
\begin{eqnarray*}\text{\rm{Shrink}}({\h v},\kappa)= \max(|{\h v}|-\kappa,0)\odot{\text{sign}}({\h v}),\end{eqnarray*}
where $\odot$ means componentwise multiplication.
Given an image ${\h u}\in {\mathbb R}^{n\times n}$, we define a discrete gradient operator:
\begin{eqnarray*}({\nabla {\h u}})^\top =\left((\nabla_{\h x}{\h u})^\top,(\nabla_{\h y}{\h u})^\top\right),\end{eqnarray*}
where ${\nabla_{\h x}}$ and ${\nabla_{\h y}}$ are the forward  horizontal
and vertical difference operators and
${\nabla}_{\h x},{\nabla}_{\h y}:{\mathbb R}^{n\times n}\mapsto {\mathbb R}^{n\times n}$ and
``$\mapsto$" represents a mapping. %Obviously, $\nabla: {\mathbb R}^{n\times n}\mapsto {\mathbb R}^{{2n^2}\times n^2}$.
Next, we review the Kurdyka-\L ojasiewicz (KL) property  \cite{BDL07}, which is widely used in convergence analysis.
We refer the readers to \cite{ABRS,ABS} for more discussions.
\begin{definition} \label{def:KL}
A proper lower semicontinuous function
$h:\!\mathbb R^n\rightarrow(-\infty,+\infty]$
	satisfies the Kurdyka-\L ojasiewicz (KL) property at a point ${\hat {\h x}}\in {\text{\rm dom}}\partial h$ if there exist a constant $a\in(0,\infty]$, a neighborhood
	$U$ of ${\hat {\h x}},$ and a continuous concave function $\phi:\;[0,a)\rightarrow[0,\infty)$ with
	$\phi(0)=0$ such that
	\begin{itemize}
		\item[(i)] $\phi$ is continuously differentiable on $(0,a)$ with $\phi'>0$ on $(0,a)$;
		\item[(ii)] for every $\h x\in U$ with $h({\hat {\h x}})< h(\h x)< h(\hat {\h x}) +a$, it holds that
		\begin{eqnarray*}\phi'(h(\h x)-h(\hat {\h x})){\text{\rm{dist}}}(\h 0,\partial h({\h x}))\ge 1. \end{eqnarray*}
	\end{itemize}
\end{definition}

\begin{definition}\label{calm} \cite{RockWets}
The function $f: {\mathbb R}^n \rightarrow {\overline{\mathbb R}}$ is said to satisfy the {\it calm} condition at ${\h x}\in {\text{\rm dom}}(f)$,
 if there exist $\varrho>0$ and neighborhood $B({\h x},\varepsilon)$ of ${\h x}$ , such that
$$ | f({\h x})-f({\h y})|\le \varrho \|{\h x}-{\h y}\|$$
for any ${\h y}\in B({\h x},\varepsilon)$.
%We say $f$ satisfies the calm condition on ${\cal S}$ if $f$ satisfies the calm condition at any point in ${\cal S}$ relative to ${\cal S}$.
\end{definition}

%\noindent Next, we review several concepts from variation analysis.
%\begin{Def}\label{defprox} ({\bf Prox-regularity}) \cite{PR96}
%A function $f$ is prox-regular at a point ${\overline{\h x}}$ for a subgradient ${\overline {\h v}}\in \partial f({\overline{\h x}})$ if
%$f$ is finite at ${\overline{\h x}}$, locally lower semi-continuous around ${\overline{\h x}}$, and
%there exists $\rho>0$ such that
%$$ f({\h x}')\ge f({\h x})+ \langle{\h v},{\h x}'-{\h x} \rangle-\frac{\rho}{2}\|{\h x}'-{\h x}\|^2$$
%whenever ${\h x}$ and ${\h x}'$ are near ${\overline{\h x}}$ with $f({\h x})$ near $f({\overline{\h x}})$ and ${\h v}\in \partial f({\h x})$ is
%near ${\overline{\h v}}$.
%Furthermore, $f$ is prox-regular at ${\overline{\h x}}$ if it is prox-regular at ${\overline{\h x}}$ for every  ${\h v}\in\partial f({\overline{\h x}})$.
%\end{Def}

\noindent The following lemma provides a calculus rule of the Fr\'{e}chet subdifferential of the ratios of two functions.
\begin{lemma}\cite{boct2023full}\label{ratioC2}
Let $O \subseteq \R^n$ be an open set, and $f_1 : O \rightarrow \overline{\mathbb \R}$ and $f_2 : O \rightarrow \mathbb \R$ be two functions which are finite at ${\h x} \in O$ with $f_2({\h x})>0$. Suppose that $f_1$ is continuous at ${\h x}$ relative to
${\text{\rm dom}}f_1$, that $f_2$  is calm at ${\h x}$, and denote $\alpha_i:=f_i({\h x})$, $i=1,2$.

\noindent (i) Then
\begin{eqnarray*}\label{Lem3:eq1} {\hat\partial} \left(\frac{f_1}{f_2}\right)({{\h x}}) =\frac{{\hat\partial} (\alpha_2 f_1-\alpha_1 f_2)({{\h x}})}{f_2({{\h x}})^2}.\end{eqnarray*}
(ii) If, in addition, $f_2$ is convex  and $\alpha_1\ge0$, then
\begin{eqnarray*}\label{Lem3:eq2}{\hat\partial} \left(\frac{f_1}{f_2}\right)({{\h x}}) \subseteq \frac{{\hat\partial} (\alpha_2 f_1)({{\h x}})-\alpha_1{\hat\partial} f_2({{\h x}})}{f_2({{\h x}})^2}.\end{eqnarray*}
\end{lemma}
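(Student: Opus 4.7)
The plan is to prove part (i) by massaging the difference quotient that defines $\hat\partial(f_1/f_2)({\h x})$ into one for the composite function $\psi:=\alpha_2 f_1-\alpha_1 f_2$, and then obtain (ii) as a corollary by invoking the convex subgradient inequality for $f_2$.

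For (i), since $f_2$ is calm at ${\h x}$ and $f_2({\h x})>0$, there is a neighborhood of ${\h x}$ on which $f_2$ is continuous and strictly positive. For $\y$ in this neighborhood, a direct computation gives
\[
\frac{f_1(\y)}{f_2(\y)}-\frac{\alpha_1}{\alpha_2\alpha_2^{-1}\alpha_2}=\frac{\alpha_2 f_1(\y)-\alpha_1 f_2(\y)}{f_2(\y)\,f_2({\h x})}=\frac{\psi(\y)-\psi({\h x})}{f_2(\y)f_2({\h x})},
\]
where I used $\psi({\h x})=\alpha_2\alpha_1-\alpha_1\alpha_2=0$. Hence for any $\v\in\R^n$,
\[
\frac{(f_1/f_2)(\y)-(f_1/f_2)({\h x})-\langle\v,\y-{\h x}\rangle}{\|\y-{\h x}\|}
=\frac{\psi(\y)-\psi({\h x})-\langle f_2({\h x})^2\v,\y-{\h x}\rangle}{f_2(\y)f_2({\h x})\|\y-{\h x}\|}+R(\y),
\]
where $R(\y):=\dfrac{[f_2({\h x})-f_2(\y)]\langle\v,\y-{\h x}\rangle}{f_2(\y)\|\y-{\h x}\|}$. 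Calmness of $f_2$ at ${\h x}$ yields $|f_2({\h x})-f_2(\y)|\le\varrho\|\y-{\h x}\|$, so $|R(\y)|\le\varrho\|\v\|\|\y-{\h x}\|/f_2(\y)\to 0$. Since $f_2(\y)f_2({\h x})\to f_2({\h x})^2>0$, taking $\liminf_{\y\to{\h x}}$ on both sides gives
\[
\v\in\hat\partial(f_1/f_2)({\h x})\;\Longleftrightarrow\; f_2({\h x})^2\v\in\hat\partial\psi({\h x}),
\]
which is exactly the claim in (i). The continuity of $f_1$ relative to $\dom f_1$ is used to rule out pathological limits along sequences where $f_1(\y)$ misbehaves.

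For (ii), assume additionally that $f_2$ is convex and $\alpha_1\ge0$. Since $f_2$ is convex and finite on the open set $O$, $\partial f_2({\h x})\neq\emptyset$ and coincides with $\hat\partial f_2({\h x})$. Fix any $\w\in\partial f_2({\h x})$. Convexity gives $f_2(\y)-f_2({\h x})\ge\langle\w,\y-{\h x}\rangle$ for all $\y\in O$, so multiplying by $-\alpha_1\le 0$ yields
\[
\psi(\y)-\psi({\h x})\;\le\;\alpha_2\bigl(f_1(\y)-f_1({\h x})\bigr)-\alpha_1\langle\w,\y-{\h x}\rangle.
\]
Given $\tilde\v\in\hat\partial\psi({\h x})$ (the numerator in (i)), rearranging and dividing by $\|\y-{\h x}\|$ shows that the liminf-inequality defining $\tilde\v+\alpha_1\w\in\hat\partial(\alpha_2 f_1)({\h x})$ is dominated from below by the corresponding liminf for $\psi$, which is nonnegative by assumption. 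Therefore $\tilde\v\in\hat\partial(\alpha_2 f_1)({\h x})-\alpha_1\hat\partial f_2({\h x})$, and dividing by $f_2({\h x})^2$ via part (i) establishes (ii).

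The main obstacle is the equivalence in part (i): one must verify that replacing the variable denominator $f_2(\y)f_2({\h x})$ by the fixed positive constant $f_2({\h x})^2$ does not corrupt the liminf inequality. This is precisely where the calm hypothesis on $f_2$ is indispensable, providing the $O(\|\y-{\h x}\|)$ control on $|f_2({\h x})-f_2(\y)|$ that makes the remainder term $R(\y)$ negligible. Part (ii) is then essentially a Fermat-type calculation, but one should be careful that the reverse inclusion can fail because of nonsmoothness of $f_2$, which is why only an inclusion (not equality) is asserted.
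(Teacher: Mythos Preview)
The paper does not supply its own proof of this lemma; it is quoted verbatim from \cite{boct2023full} and used as a black box. So there is no ``paper's proof'' to compare against here.

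Your argument is essentially correct and is in fact the standard way this quotient rule is established. A couple of minor points: the expression $\alpha_1/(\alpha_2\alpha_2^{-1}\alpha_2)$ is a typo for $\alpha_1/\alpha_2$; and your throwaway remark that continuity of $f_1$ relative to $\dom f_1$ ``rules out pathological limits'' is vague---in the Fr\'echet computation of part (i) that hypothesis is actually not doing any work, since for $\mathbf y\notin\dom f_1$ both difference quotients are $+\infty$ (using $\alpha_2>0$), and for $\mathbf y\in\dom f_1$ your algebraic identity is exact. The calmness of $f_2$ is the genuinely load-bearing assumption, exactly as you identify, and the subgradient-inequality argument for (ii) is clean. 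One tiny refinement in (ii): you fix a single $\mathbf w\in\partial f_2(\mathbf x)$ and show $\tilde{\mathbf v}+\alpha_1\mathbf w\in\hat\partial(\alpha_2 f_1)(\mathbf x)$, which already gives $\tilde{\mathbf v}\in\hat\partial(\alpha_2 f_1)(\mathbf x)-\alpha_1\partial f_2(\mathbf x)$; there is no need to quantify over all $\mathbf w$.
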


%  Let  $\phi_1,\phi_2$ be proper lower semicontinous functions.
%  $\phi_2^*$ denotes the conjugate function of $\phi_2$.
%Given $\nu>0$,
%  define $\eta({\h x},{\h y})$ by
%\begin{eqnarray*}\eta({\h x},{\h y})=\left\{\begin{array}{ll}
%\frac{\phi_1({\h x})}{\langle {\h x},{\h y}\rangle-\phi_2^*({\h y})}&\;\;\mbox{if}\;({\h x},{\h y })\in {\text{\rm dom}}(\phi_1)\times {\text{\rm dom}}(\phi_2^*)\\
%& \;\;\mbox{and}\;\langle{\h x},{\h y}\rangle-\phi_2^*({\h y})>0,\\[0.2cm]
% +\infty & \;\;\mbox{otherwise}.\end{array}\right.\end{eqnarray*}

\begin{lemma} \label{sharpcalm}
Let ${\h x}\in{\text{\rm dom}}\phi_1$,
${\h y}\in {\text{\rm{dom}}}(\phi_2^*)$ with $\phi_1({\h x})>0$ and
$\langle{\h x},{\h y} \rangle -\phi^*_2({\h y})> \nu>0$.
Define
$$\eta({\h x},{\h y}) =\frac{\phi_1({\h x})}{ \langle{\h x},{\h y} \rangle -\phi^*_2({\h y})}.$$
Assume that $\phi_1$ is continuous at ${\h x}$ relative to ${\text{\rm dom}}(\phi_1)$ and
$\phi_2^*$ satisfies  calm at ${\h y}$.
Then, there exists $B({\h x},\varepsilon)\times B({\h y},{\varepsilon})$
 such that for any $({\h u},{\h v})\in B({\h x},\varepsilon)\times B({\h y},{\varepsilon})$,
${\h v}\in{\text{\rm dom}}(\phi_2^*)$ and $\phi_2^*$ is  calm on $B({\h y},{\varepsilon})$
and $\langle {\h u},{\h v} \rangle-\phi_2^*({\h v})\ge \nu>0$.
Denote
${\hat{\partial}}\eta({\h x},{\h y}) = {\hat{\partial}}_{\h x}\eta({\h x},{\h y}) \times {\hat{\partial}}_{\h y}\eta({\h x},{\h y}).$
 Then,
\begin{eqnarray}\label{etax}&&{\hat{\partial}}_{\h x}\eta({\h x},{\h y}) = \frac{(\langle{\h x},{\h y} \rangle -\phi^*_2({\h y}))\hat{\partial}\phi_1({\h x})-\phi_1({\h x}){\h y}}{(\langle{\h x},{\h y} \rangle -\phi^*_2({\h y}))^2}, \\[0.2cm]
\label{etay} &&{\hat{\partial}}_{\h y}\eta({\h x},{\h y})=\frac{\phi_1({\h x})({\hat{\partial}\phi_2^*}({\h y})-{\h x})}{(\langle{\h x},{\h y} \rangle -\phi^*_2({\h y}))^2}.\end{eqnarray}
 \end{lemma}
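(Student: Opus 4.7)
The plan is to first justify the neighborhood assertion using elementary continuity considerations, and then apply Lemma \ref{ratioC2}(i) separately to the two partial maps $\eta(\cdot,\mathbf{y})$ and $\eta(\mathbf{x},\cdot)$, reducing everything to computing the Fr\'echet subdifferential of a sum of a ``smooth part'' and a single nonsmooth piece.

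First I would handle the opening neighborhood claim. Since $\phi_2^*$ is calm at $\mathbf{y}$, Definition \ref{calm} supplies some $\varepsilon_0>0$ and $\varrho>0$ such that $\phi_2^*$ is finite and $\varrho$-Lipschitz (hence calm at every point) on $B(\mathbf{y},\varepsilon_0)$; in particular $B(\mathbf{y},\varepsilon_0)\subseteq \text{dom}(\phi_2^*)$. Then the map $(\mathbf{u},\mathbf{v})\mapsto \langle\mathbf{u},\mathbf{v}\rangle-\phi_2^*(\mathbf{v})$ is continuous on $\mathbb{R}^n\times B(\mathbf{y},\varepsilon_0)$ and equals the quantity $>\nu$ at $(\mathbf{x},\mathbf{y})$, so shrinking to a single radius $\varepsilon\le\varepsilon_0$ I obtain $\langle\mathbf{u},\mathbf{v}\rangle-\phi_2^*(\mathbf{v})\ge\nu$ throughout $B(\mathbf{x},\varepsilon)\times B(\mathbf{y},\varepsilon)$.

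Next I would derive \eqref{etax} by freezing $\mathbf{y}$ and setting $f_1(\cdot):=\phi_1(\cdot)$ and $f_2(\cdot):=\langle \cdot,\mathbf{y}\rangle-\phi_2^*(\mathbf{y})$. The denominator is affine in $\mathbf{x}$ (hence calm and convex), positive at $\mathbf{x}$, while $\phi_1$ is continuous at $\mathbf{x}$ relative to its domain by hypothesis, so Lemma \ref{ratioC2}(i) applies and gives
\[
\hat{\partial}_{\mathbf{x}}\eta(\mathbf{x},\mathbf{y})=\frac{\hat{\partial}\bigl(\alpha_2\phi_1(\cdot)-\alpha_1(\langle \cdot,\mathbf{y}\rangle-\phi_2^*(\mathbf{y}))\bigr)(\mathbf{x})}{\alpha_2^2},
\]
with $\alpha_1=\phi_1(\mathbf{x})$ and $\alpha_2=\langle\mathbf{x},\mathbf{y}\rangle-\phi_2^*(\mathbf{y})$. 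Since the second summand is smooth in $\mathbf{x}$ with gradient $-\alpha_1\mathbf{y}$, the standard Fr\'echet calculus rule for adding a $\mathcal C^1$ function \cite[Exercise 8.8]{RockWets} converts the inner subdifferential into $\alpha_2\hat{\partial}\phi_1(\mathbf{x})-\alpha_1\mathbf{y}$, yielding \eqref{etax}.

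For \eqref{etay} I freeze $\mathbf{x}$ and set $\tilde f_1(\cdot):=\phi_1(\mathbf{x})$ and $\tilde f_2(\cdot):=\langle \mathbf{x},\cdot\rangle-\phi_2^*(\cdot)$. Now $\tilde f_1$ is constant (trivially continuous) and $\tilde f_2$ is calm at $\mathbf{y}$ because it is the sum of a linear function and $-\phi_2^*$, with $\phi_2^*$ calm at $\mathbf{y}$ by hypothesis; its value there is $\alpha_2>\nu>0$. Lemma \ref{ratioC2}(i) then gives
\[
\hat{\partial}_{\mathbf{y}}\eta(\mathbf{x},\mathbf{y})=\frac{\hat{\partial}\bigl(\alpha_2\phi_1(\mathbf{x})-\alpha_1(\langle\mathbf{x},\cdot\rangle-\phi_2^*(\cdot))\bigr)(\mathbf{y})}{\alpha_2^2}.
\]
The term $\alpha_2\phi_1(\mathbf{x})$ is constant in $\mathbf{y}$; for the remaining piece I pull out $\alpha_1>0$, add back the smooth function $-\alpha_1\langle\mathbf{x},\cdot\rangle$ using the same $\mathcal C^1$ sum rule, and identify $\hat{\partial}\phi_2^*(\mathbf{y})=\partial\phi_2^*(\mathbf{y})$ because $\phi_2^*$ is convex. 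This produces $\alpha_1(\hat{\partial}\phi_2^*(\mathbf{y})-\mathbf{x})$ in the numerator, which is exactly \eqref{etay}.

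The main obstacle I expect is the bookkeeping that justifies applying Lemma \ref{ratioC2} in the partial-variable sense: the lemma is stated for a single-variable ratio, so I must explicitly verify that when one argument of $\eta$ is frozen, the resulting one-variable map still satisfies its continuity/calmness hypotheses at the evaluation point; the calmness of $\phi_2^*$ at $\mathbf{y}$ is precisely what allows the frozen-$\mathbf{x}$ denominator to be calm at $\mathbf{y}$. Once that is set up cleanly, the remaining content is purely the smooth-plus-nonsmooth subdifferential rule and algebraic simplification.
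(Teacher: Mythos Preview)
Your proposal is correct and uses the same key ingredient as the paper (Lemma~\ref{ratioC2}(i) followed by the smooth-plus-nonsmooth Fr\'echet sum rule), but with a small tactical difference worth noting. The paper applies Lemma~\ref{ratioC2}(i) \emph{once} to the joint function by setting $\eta_1(\mathbf{x},\mathbf{y})=\phi_1(\mathbf{x})$ and $\eta_2(\mathbf{x},\mathbf{y})=\langle\mathbf{x},\mathbf{y}\rangle-\phi_2^*(\mathbf{y})$, obtaining $\hat\partial\eta(\mathbf{x},\mathbf{y})=\hat\partial(\alpha_2\eta_1-\alpha_1\eta_2)(\mathbf{x},\mathbf{y})/\alpha_2^2$, and then reads off the two components from the separable-plus-bilinear structure of the numerator. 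You instead freeze each variable and apply the quotient rule twice to the partial maps $\eta(\cdot,\mathbf{y})$ and $\eta(\mathbf{x},\cdot)$. Your route is slightly more explicit about hypothesis-checking (you spell out the neighborhood claim and verify calmness of each one-variable denominator, which the paper leaves implicit), while the paper's joint application directly yields the product decomposition $\hat\partial\eta=\hat\partial_{\mathbf{x}}\eta\times\hat\partial_{\mathbf{y}}\eta$ that the ``Denote'' line asserts; in your approach that identity is recovered only after observing that $\alpha_2\phi_1(\mathbf{x})-\alpha_1\langle\mathbf{x},\mathbf{y}\rangle+\alpha_1\phi_2^*(\mathbf{y})$ is separable up to a smooth coupling term. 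Either way the same formulas drop out.
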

 \begin{proof}
 Define the function
 \begin{eqnarray*}\eta_1({\h x},{\h y})=\phi_1({\h x}),\;\; \eta_2({\h x},{\h y})= \langle{\h x},{\h y} \rangle-\phi_2^*({\h y}).\end{eqnarray*}
 By invoking the assertion (i) of Lemma \ref{ratioC2}, we have
 \begin{eqnarray*}{\hat\partial}\eta({\h x},{\h y}) =\frac{{\hat\partial}(\alpha_2 \eta_1-\alpha_1\eta_2)({\h x},{\h y})}{\eta_2({\h x},{\h y})^2}.
 \end{eqnarray*}
 With further calculations, we have the desired assertions (\ref{etax}) and (\ref{etay}).
 \end{proof}

\begin{remark}\label{remk25}
We note that Lemma \ref{sharpcalm} differs slightly from \cite[Proposition 2.3]{LSZ}, where ``$\phi_2^*$ satisfies calmness at ${\h y}$ relative to $\dom(\phi_2)$". Indeed,
 assuming for $\phi_2^*$ calmness relative to its domain will be dangerous since we want to have a neighborhood of the point(s) at which we calculate the subdifferential where $\phi_2^*$  is finite.
\end{remark}
%\begin{remark} \label{remk26} If $\phi_2^*$ is an indicator function of a closed convex set,
%the assumption ``$\langle {\h u},{\h v} \rangle-\phi_2^*({\h v})\ge \nu>0$"
%can guarantee $\h y$ cannot lie on the boundary of $\dom\phi_2^*$, i.e., ${\h y}\in \text{int}(\dom\phi_2^*)$.
%\end{remark}
%\section{Stationary points and assumptions of fractional programs}
Next, we introduces the following assumptions for the problem  (\ref{PForm}).

\begin{ass}\label{ass1}
Throughout this paper, we assume that:
\begin{itemize}
    \item[(a)] ${\cal S} \subseteq {\mathbb R}^n$ is a nonempty, convex, and compact set;
    \item[(b)] $g$ is a proper, convex, and lower semicontinuous function;
    \item[(c)] $h$ is differentiable with a Lipschitz continuous gradient over an open set containing the compact set ${\cal S}$, with Lipschitz constant $L_{\nabla h}$;
    \item[(d)] $f$ is a proper, convex, and lower semicontinuous function such that $K(\mathcal{S}) \subseteq {\rm int}({\rm dom} f)$ and $f(K{\h x}) > 0$ for all ${\h x} \in {\cal S}$;
    \item[(e)] ${\cal S} \cap ({\text{\rm dom}}g) \neq \emptyset$ and $\alpha := \inf_{\h x \in {\cal S}} \{ g({\h x}) + h({\h x})\} \ge 0$.
\end{itemize}
\end{ass}

Next, we present an additional assumption to ensure that the calculus rules of the subdifferential hold,
i.e., $\partial (\iota_{S}(\cdot)+g(\cdot))=\partial \iota_{S}(\cdot)+\partial g(\cdot)$.

\begin{ass}\label{ass2}
    $\text{ri}(\mathcal{S}) \cap \text{ri}(\text{dom} g) \neq \emptyset$ or $\mathcal{S} \cap \text{ri}(\text{dom} g) \neq \emptyset$ if ${\cal S}$ is polyhedral.
\end{ass}

Next, we introduce two versions of stationary points for fractional programs and explore their relationships.

\begin{definition} \cite{BDL} For problem (\ref{PForm}), we define that ${\overline{\h x}}$ is:
\begin{itemize}
    \item[(i)] a Fr\'{e}chet stationary point if
    $
    0 \in \hat{\partial} \displaystyle{\left( \frac{g + h + \iota_{\cal S}}{f \circ K} \right)({\overline{\h x}})};
    $
    \item[(ii)] a (limiting) lifted stationary point if
    $$
    0 \in \left( \partial g({\overline{\h x}}) + \nabla h(\overline{\h x}) + \partial \iota_{\cal S}({\overline{\h x}}) \right) f(K \overline{\h x}) - (g(\overline{\h x}) + h(\overline{\h x})) K^* \partial f(K {\overline{\h x}}).
    $$
\end{itemize}
\end{definition}

\noindent Any local minimizer \(\overline{\h{x}} \in \mathbb{R}^n\) of (\ref{PForm}) is clearly a Fr\'{e}chet stationary point under Assumption \ref{ass1}. Conversely, if \(\overline{\h{x}} \in \mathbb{R}^n\) is a Fr\'{e}chet stationary point of (\ref{PForm}) such that either \(\overline{\h{x}} \in \text{ri}(\mathcal{S}) \cap \text{ri}(\text{dom}\,g)\), or \(\mathcal{S}\) is polyhedral and \(\overline{\h{x}} \in \mathcal{S} \cap \text{ri}(\text{dom}\,g)\), then, by Lemma \ref{ratioC2}, \(\overline{\h{x}}\) is also a limiting lifted stationary point of (\ref{PForm}). However, a limiting lifted stationary point is not necessarily a Fr\'{e}chet stationary point (see, e.g., \cite[Example 3.4]{boct2023full}).
In the numerical section, we will discuss quantifying the violation of the conditions defining a (limiting) lifted stationary point.

\begin{remark} Obviously, the motivation examples (a)-(d) all satisfy Assumptions \ref{ass1} and \ref{ass2}.
\end{remark}

\section{Proposed algorithm}\label{newalgo}
\subsection{Fully-splitting proximal subgradient}
In this section, we introduce a single-loop fully-splitting proximal subgradient algorithm with a relaxation step (abbreviated as FPSA) for solving problem~(\ref{PForm}). Inspired by the effectiveness of relaxation techniques in both convex and nonconvex optimization, we incorporate a relaxation step in FPSA to accelerate its convergence;  employed a similar approach \cite{boct2023full}. When the proximal mapping of \( g \) admits a closed-form solution, FPSA becomes a single-loop, fully splitting algorithm, which we refer to as a proximal-friendly algorithm.

The step size \( \delta \) is allowed to vary across iterations, satisfying
\[
0 < \underline{\delta} \le \inf_{k \in \mathbb{N}} \delta_k \le \delta_k \le \sup_{k \in \mathbb{N}} \delta_k \le \overline{\delta}<\frac{1}{L_{\nabla h}}.
\]
Similarly, the relaxation parameter \( \sigma \) can also depend on the iteration index, satisfying
\[
0 < \underline{\sigma} \le \inf_{k \in \mathbb{N}} \sigma_k \le \sigma_k \le \sup_{k \in \mathbb{N}} \sigma_k \le \overline{\sigma}<2.
\]
These flexible choices of step size may influence the algorithm behavior in some ways, and the coming convergence analysis can easily extend to this more general step size setting.
To illustrate the core insights of the proposed algorithm, we adopt constant step sizes throughout our presentation and analysis:

%{\bf  (\ref{SplitI}): Full-splitting proximal subgradient algorithm with extrapolated step (Fs-PSA$_{\text e}$).}
\begin{subequations} \label{SplitI}
	\begin{numcases}{\hbox{\quad}}
{\h y}^{k+1}\in\partial f(K{\h x}^k)\label{ysub}\;\\[0.1cm]
\label{xsub} \h x^{k+1}\in\arg\min_{{\h x}\in{\cal S}} \left[ g({\h x})+\frac{1}{2\delta}\left\|{\h x}-{\h u}^k+\delta\nabla h({\h x}^k)-\theta_k\delta K^\top {\h y}^{k+1}\right\|^2\right]\;\;\\
\label{usub}{\h u}^{k+1}=(1-\sigma){\h u}^k+\sigma{\h x}^{k+1}\;\\[0.1cm]
%\label{wsub}{\h v}^{k+1}={\h v}^k+\beta({\h v}^k-{\h z}^{k+1}),\\
\theta_{k+1}=\vartheta({\h x}^{k+1},{\h u}^{k+1};\delta)\label{theta}
%\label{lamsub}{\bm \gamma}^{k+1}={\bm \gamma}^k+\rho(A{\h x}^{k+1} -{{\h z}}^{k+1}).
	\end{numcases}
\end{subequations}
%\begin{algorithm}[t]
%	\caption{Full-splitting proximal subgradient algorithm with extrapolated step (Fs-PSA$_{\text e}$)}
%	\label{subx:admm:Bes}
%	\begin{algorithmic}[1]
%		\Require{$\beta >0$, $\iota>0$, $0<q<1$ and $\delta_0,\theta_0,\sigma_0>0$; MaxItr.}
%  \For k=1:MaxItr
%   \State{${\h y}^{k+1}\in\partial f(K{\h x}^k).$}
%   \State{$\h x^{k+1}\in{\text{Proj}}_{\cal S}\left({\h u}^k+\frac{\theta_k}{\delta}K^\top {\h y}^{k+1}-\frac{1}{\delta}\nabla h({\h x}^k)-\frac{1}{\delta}A^\top {\h z}^k\right)$.}
%   \State{${\h{z}}^{k+1}=\arg\min_{\h z}\left( g^*({\h z})-\langle A{\h x}^{k+1}, {\h z}\rangle+\frac{\sigma}{2}\|\h z\|^2\right)$.}
%   \State{Choosing $\sigma_{k+1}=q*\sigma$  such that }
%   \State{$\theta^{k+1}=\vartheta({\h x}^{k+1},{\h z}^{k+1},{\h u}^{k+1};\delta,{\sigma},\sigma_{k+1})>0$.}
%   \State{Update $\delta_{k+1} = 2\iota +L_{\nabla h}+ \frac{\|A\|^2}{\sigma_{k}}+\frac{\|A\|^2}{\sigma_{k+1}^2}$.}
%   \State{${\h u}^{k+1}={\h u}^k-\beta({\h u}^k-{\h x}^{k+1})$.}
%   \EndFor
%	\State{{\bf Output} ${\overline {\h x}}$.}
%	\end{algorithmic}
%\end{algorithm}
and
\begin{eqnarray}\label{Lfun}
&&\varphi({\h x},{\h u};\delta)=g({\h x})+h({\h x})+\iota_{\cal S}({\h x})+\frac{1}{2\delta}\|{\h x}-{\h u}\|^2,\\
&& \vartheta({\h x},{\h u};\delta) = \frac{\varphi({\h x},{\h u};\delta)}{f({K}{\h x})}.
\end{eqnarray}
%Although the parameter $\beta$ in (\ref{usub}) can change from iteration to iteration, we fix it here for simplicity.
%We specify
%the parameters of $\delta,\sigma$ as follows:
%\begin{center}\fbox{
% \begin{minipage}{12.8cm}
%
%\noindent {\bf Requirement of $\sigma$ and $\delta$}.
%\begin{subequations}
%\begin{eqnarray}\label{gam} \sigma_{k+1}\le \sigma\;\mbox{and}\; \sigma\ge {\overline{\sigma}}>0,\end{eqnarray}
%\begin{eqnarray}\label{del} \delta=2\iota+L_{\nabla h}+\frac{\|A\|^2}{\sigma_{k-1}}+\frac{\|A\|^2}{\sigma^2}, \end{eqnarray}
%\end{subequations}
%where $\iota>0$.
%\end{minipage}
% \smallskip
%} \end{center}

\subsection{Subsequential convergence}

Define ${\bm W}^k:=({\h x}^k,{\h y}^k,{\h u}^k)$ and denote
\begin{equation*}\label{cofsharp}
 \varrho_{1}=\frac{1-\delta L_{\nabla h}}{2\delta},\;\;\varrho_{2}={\frac{(1-(1-\sigma)^2)}{2\sigma^2 \delta}}.
\end{equation*}
We first establish the following theorem to show the descent property of the combination of $\varphi$
with an extra term scaling with $\theta_k$ involved the denominator of (\ref{PForm}) or its conjugate.

\begin{theorem}\label{PriTheo}
Let the sequence $\{{\bm W}^k\}$
be generated by (\ref{SplitI}).
 Suppose Assumption \ref{ass1} holds.
Then,
for any $k$, it holds that
 \begin{eqnarray*}
 &&\varphi({\h x}^{k+1},{\h u}^{k+1};\delta)+\theta_k f({K}{\h x}^k)-\theta_k\left( \langle {K}{\h x}^{k+1},{\h y}^{k+1} \rangle-f^*({\h y}^{k+1})\right)\nn\\
&&\le\varphi({\h x}^{k},{\h u}^{k};\delta)
-\varrho_{1}\|{\h x}^k-{\h x}^{k+1}\|^2-\varrho_{2}\|{\h u}^k-{\h u}^{k+1}\|^2.
\end{eqnarray*}

\end{theorem}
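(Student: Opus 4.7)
The plan is a four-ingredient descent calculation: use strong convexity of the proximal subproblem, the Lipschitz descent inequality for $h$, the Fenchel--Young equality produced by the subgradient choice of ${\h y}^{k+1}$, and the algebraic identity governing the relaxation step. All inequalities will be summed, and the resulting quadratic terms manipulated until they assemble into $\varphi({\h x}^{k+1},{\h u}^{k+1};\delta)$ with the two stated negative curvature terms.

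First I would expand the proximal subproblem (\ref{xsub}) by dropping the constant term $\frac{\delta}{2}\|\nabla h({\h x}^k)-\theta_k K^\top {\h y}^{k+1}\|^2$, which does not affect the argmin, and observe that the objective is $\frac{1}{\delta}$-strongly convex in ${\h x}$. Testing the optimality with ${\h x}={\h x}^k\in\mathcal{S}$ gives
\begin{equation*}
g({\h x}^{k+1})+\frac{1}{2\delta}\|{\h x}^{k+1}-{\h u}^k\|^2+\langle\nabla h({\h x}^k)-\theta_k K^\top{\h y}^{k+1},\,{\h x}^{k+1}-{\h x}^k\rangle+\frac{1}{2\delta}\|{\h x}^k-{\h x}^{k+1}\|^2 \le g({\h x}^k)+\frac{1}{2\delta}\|{\h x}^k-{\h u}^k\|^2.
\end{equation*}
Next, the descent lemma for $h$ (justified by Assumption \ref{ass1}(c)) supplies $h({\h x}^{k+1})-h({\h x}^k)\le \langle\nabla h({\h x}^k),{\h x}^{k+1}-{\h x}^k\rangle+\tfrac{L_{\nabla h}}{2}\|{\h x}^{k+1}-{\h x}^k\|^2$. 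Adding the two inequalities cancels the $\nabla h$ inner product and produces the clean coefficient $\varrho_1=\frac{1-\delta L_{\nabla h}}{2\delta}$ in front of $\|{\h x}^{k+1}-{\h x}^k\|^2$.

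Then I would dispose of the remaining inner product $-\theta_k\langle K^\top{\h y}^{k+1},{\h x}^{k+1}-{\h x}^k\rangle$ using ${\h y}^{k+1}\in\partial f(K{\h x}^k)$. Since $f$ is proper convex lsc, Fenchel--Young equality yields $f(K{\h x}^k)+f^*({\h y}^{k+1})=\langle K{\h x}^k,{\h y}^{k+1}\rangle$, hence
\begin{equation*}
-\theta_k\langle K^\top{\h y}^{k+1},{\h x}^{k+1}-{\h x}^k\rangle=\theta_k f({K}{\h x}^k)-\theta_k\bigl(\langle K{\h x}^{k+1},{\h y}^{k+1}\rangle-f^*({\h y}^{k+1})\bigr),
\end{equation*}
which is precisely the extra term appearing in the theorem statement.

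The last and most delicate step is to convert $\tfrac{1}{2\delta}\|{\h x}^{k+1}-{\h u}^k\|^2$ into $\tfrac{1}{2\delta}\|{\h x}^{k+1}-{\h u}^{k+1}\|^2$ with a nonnegative remainder producing $-\varrho_2\|{\h u}^{k+1}-{\h u}^k\|^2$. From (\ref{usub}) one reads ${\h u}^{k+1}-{\h u}^k=\sigma({\h x}^{k+1}-{\h u}^k)$, so ${\h x}^{k+1}-{\h u}^k=\sigma^{-1}({\h u}^{k+1}-{\h u}^k)$ and ${\h x}^{k+1}-{\h u}^{k+1}=\tfrac{1-\sigma}{\sigma}({\h u}^{k+1}-{\h u}^k)$. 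Subtracting the squared norms gives
\begin{equation*}
\|{\h x}^{k+1}-{\h u}^k\|^2-\|{\h x}^{k+1}-{\h u}^{k+1}\|^2=\frac{1-(1-\sigma)^2}{\sigma^2}\|{\h u}^{k+1}-{\h u}^k\|^2,
\end{equation*}
which is exactly $2\delta\varrho_2\|{\h u}^{k+1}-{\h u}^k\|^2$. Since ${\h x}^k,{\h x}^{k+1}\in\mathcal{S}$ allows adjoining the indicator $\iota_\mathcal{S}$ at no cost, substituting this identity finishes the proof. The main obstacle, more bookkeeping than conceptual, is keeping the signs right in the relaxation identity so that $\varrho_2$ comes out with the sharp constant $(1-(1-\sigma)^2)/(2\sigma^2\delta)$; all the other ingredients are standard one-line manipulations.
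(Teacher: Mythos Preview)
Your proposal is correct and follows essentially the same approach as the paper's proof: both combine the $1/\delta$-strong convexity of the proximal subproblem, the descent lemma for $h$, the Fenchel--Young equality from ${\h y}^{k+1}\in\partial f(K{\h x}^k)$, and the algebraic identity for the relaxation step~(\ref{usub}). The only difference is cosmetic ordering---the paper applies Fenchel--Young before substituting the descent lemma, whereas you add the descent lemma first---but the computations and constants match exactly.
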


\begin{proof}
 Since $\nabla h$ is Lipschitz continuous with constant $L_{\nabla h}$, it holds that
\begin{eqnarray}\label{desx:key1}h({\h x}^{k+1})-h({\h x}^k)\le \langle {\h x}^{k+1}-{\h x}^k,
\nabla h({\h x}^k)\rangle +\frac{L_{\nabla h}}{2}\|{\h x}^{k+1}- {\h x}^k\|^2. \end{eqnarray}
Noting that  the objective of (\ref{xsub}) is $1/\delta$-strongly convex, we have
\begin{eqnarray}\label{desx:key2} &&g({\h x}^{k+1})+\langle {\h x}^{k+1}-{\h x}^k,\nabla h({\h x}^k)\rangle-\theta_k\langle {K}^\top{\h y}^{k+1},{\h x}^{k+1} \rangle
+\frac{1}{2\delta}\|{\h x}^{k+1}-{\h u}^k\|^2\nn\\
&&\le {g({\h x}^{k})}-\theta_k\langle {K}^\top
{\h y}^{k+1},{\h x}^k \rangle+\frac{1}{2\delta}\|{\h x}^k-{\h u}^k\|^2-\frac{1}{2\delta}\|{\h x}^k-{\h x}^{k+1}\|^2.
 \end{eqnarray}
 \noindent By combining the above inequality with the fact
$$\langle {\h y}^{k+1},{K}({\h x}^k-{\h x}^{k+1})\rangle =f({K}{\h x}^k)-\left(\langle {K}{\h x}^{k+1},{\h y}^{k+1} \rangle-f^*({\h y}^{k+1})\right),$$
we have that
\begin{eqnarray*} &&g({\h x}^{k+1})+\langle {\h x}^{k+1}-{\h x}^k,\nabla h({\h x}^k)\rangle+\theta_k\left[f({K}{\h x}^k)- (\langle {K}{\h x}^{k+1},{\h y}^{k+1} \rangle -f^*({\h y}^{k+1}) )\right]
\nn\\ &&+\frac{1}{2\delta}\|{\h x}^{k+1}-{\h u}^k\|^2
\le g({\h x}^{k})+\frac{1}{2\delta}\|{\h x}^k-{\h u}^k\|^2-\frac{1}{2\delta}\|{\h x}^k-{\h x}^{k+1}\|^2.
 \end{eqnarray*}

\noindent  Substituting (\ref{desx:key1}) into (\ref{desx:key2}), we have
 \begin{eqnarray*} &&g({\h x}^{k+1}) +h({\h x}^{k+1})-h({\h x}^k)+\theta_k\left[f({K}{\h x}^k)- (\langle {K}{\h x}^{k+1},{\h y}^{k+1} \rangle -f^*({\h y}^{k+1}) )\right]
\nn\\ &&+\frac{1}{2\delta}\|{\h x}^{k+1}-{\h u}^k\|^2
\le g({\h x}^{k})+\frac{1}{2\delta}\|{\h x}^k-{\h u}^k\|^2-\frac{1/\delta-L_{\nabla h}}{2}\|{\h x}^k-{\h x}^{k+1}\|^2.
 \end{eqnarray*}

\noindent It further implies that
\begin{eqnarray}\label{psides1} &&\varphi({\h x}^{k+1},{\h u}^k;\delta) +\theta_k\left[f({K}{\h x}^k)- (\langle {K}{\h x}^{k+1},{\h y}^{k+1} \rangle -f^*({\h y}^{k+1}) )\right]\nn\\
 &&\le \varphi({\h x}^{k},{\h u}^k;\delta) -\frac{1}{2}(1/\delta-L_{\nabla h})\|{\h x}^{k+1}-{\h x}^k\|^2.\end{eqnarray}

\noindent Next,
\begin{eqnarray*}
 &&\frac{1}{2\delta}\|{\h u}^{k+1}-{\h x}^{k+1}\|^2=\frac{1}{2\delta}\|{\h u}^k-\sigma({\h u}^k-{\h x}^{k+1})-{\h x}^{k+1}\|^2\nn\\
 %&=&\frac{1}{2\kappa}\|{\h u}^k-{\h y}^{k+1}\|^2-\frac{1}{2\kappa}(1-(1-\xi)^2)\|{\h u}^k-{\h y}^{k+1}\|^2\nn\\
 &&=\frac{1}{2\delta}\|{\h u}^k-{\h x}^{k+1}\|^2-\frac{(1-(1-\sigma)^2)}{2\sigma^2 \delta}\|{\h u}^k-{\h u}^{k+1}\|^2,
 \end{eqnarray*}
 where the last equality follows from (\ref{usub}).
 So, it leads to
 \begin{eqnarray}
\varphi({\h x}^{k+1},{\h u}^{k+1}; \delta)\ \le\ \varphi({\h x}^{k+1},{\h u}^{k};\delta)-\frac{(1-(1-\sigma)^2)}{2\sigma^2\delta}\|{\h u}^k-{\h u}^{k+1}\|^2.\label{phi_descent}
\end{eqnarray}
%Finally, the assertion follows directly by combining (\ref{psides1}) with the above inequality.\qed
Finally, the assertion follows directly by combining (\ref{psides1}) with the above inequality.
% \noindent Finally, the assertion follows directly by combining (\ref{psides1}) with the above inequality.\qed
\end{proof}

\begin{theorem}\label{DescentLemma}Let the sequence $\{{\bm W}^k\}$
be generated by (\ref{SplitI}). Suppose Assumption \ref{ass1} holds. Then,
  for any $k$,
 \begin{eqnarray*}&&\varphi({\h x}^{k+1},{\h u}^{k+1};\delta)-\theta_k f({K}{\h x}^{k+1})\le
-\varrho_1\|{\h x}^k-{\h x}^{k+1}\|^2-\varrho_2\|{\h u}^k-{\h u}^{k+1}\|^2.
 \end{eqnarray*}
\end{theorem}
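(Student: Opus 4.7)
The plan is to derive Theorem~\ref{DescentLemma} as an almost immediate consequence of Theorem~\ref{PriTheo}, relying on two elementary ingredients: (i) the definition of $\theta_k$ in (\ref{theta}), which ties the previous iterate's merit value to $f(K{\h x}^k)$; and (ii) the Fenchel--Young inequality applied to the convex function $f$ at $(K{\h x}^{k+1},{\h y}^{k+1})$.

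First, I would invoke the update rule (\ref{theta}) at index $k$ (i.e., $\theta_k=\vartheta({\h x}^k,{\h u}^k;\delta)=\varphi({\h x}^k,{\h u}^k;\delta)/f(K{\h x}^k)$), which yields the identity
\[
\varphi({\h x}^k,{\h u}^k;\delta)=\theta_k\, f(K{\h x}^k).
\]
Substituting this into the conclusion of Theorem~\ref{PriTheo} makes the term $\theta_k f(K{\h x}^k)$ cancel on both sides, leaving
\[
\varphi({\h x}^{k+1},{\h u}^{k+1};\delta)-\theta_k\bigl(\langle K{\h x}^{k+1},{\h y}^{k+1}\rangle-f^*({\h y}^{k+1})\bigr)\le -\varrho_1\|{\h x}^k-{\h x}^{k+1}\|^2-\varrho_2\|{\h u}^k-{\h u}^{k+1}\|^2.
\]

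Next, I would apply the Fenchel--Young inequality $\langle {\h v},{\h w}\rangle \le f({\h v})+f^*({\h w})$ with ${\h v}=K{\h x}^{k+1}$ and ${\h w}={\h y}^{k+1}$, which gives
\[
\langle K{\h x}^{k+1},{\h y}^{k+1}\rangle-f^*({\h y}^{k+1})\le f(K{\h x}^{k+1}).
\]
Combined with the nonnegativity of $\theta_k$ (guaranteed by Assumption~\ref{ass1}(d)-(e): $g+h\ge 0$ on ${\cal S}$ and $f\circ K>0$, while $\iota_{\cal S}({\h x}^k)=0$ and the quadratic term is nonnegative, so $\varphi({\h x}^k,{\h u}^k;\delta)\ge 0$), this implies
\[
-\theta_k\bigl(\langle K{\h x}^{k+1},{\h y}^{k+1}\rangle-f^*({\h y}^{k+1})\bigr)\ge -\theta_k\, f(K{\h x}^{k+1}).
\]
Chaining this lower bound with the inequality obtained in the previous step gives exactly the claimed descent estimate.

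There is no real obstacle here: the theorem is essentially a bookkeeping corollary of Theorem~\ref{PriTheo}, where the choice $\theta_k=\varphi({\h x}^k,{\h u}^k;\delta)/f(K{\h x}^k)$ is precisely tuned so that the "past" terms vanish, and Fenchel--Young converts the conjugate expression into $f(K{\h x}^{k+1})$ in the right direction. The only point worth flagging explicitly is the nonnegativity of $\theta_k$, which is what allows the Fenchel--Young bound to be used with the correct sign.
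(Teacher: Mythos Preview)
Your proposal is correct and follows essentially the same approach as the paper: both proofs combine Theorem~\ref{PriTheo} with the Fenchel--Young inequality $\langle K{\h x}^{k+1},{\h y}^{k+1}\rangle-f^*({\h y}^{k+1})\le f(K{\h x}^{k+1})$, the nonnegativity of $\theta_k$, and the identity $\theta_k f(K{\h x}^k)=\varphi({\h x}^k,{\h u}^k;\delta)$. The only difference is the order in which these ingredients are applied, which is immaterial.
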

\begin{proof}
 Since
$\langle {K}{\h x}^{k+1},{\h y}^{k+1} \rangle-f^*({\h y}^{k+1}) \le  f({K}{\h x}^{k+1})$,
and  $\theta_k \ge 0$,
 Theorem \ref{PriTheo} leads to
\begin{eqnarray*}
 &&\varphi({\h x}^{k+1},{\h u}^{k+1};\delta)+\theta_k f({K}{\h x}^k)-\theta_k f({K}{\h x}^{k+1})\nn\\
&&\le\varphi({\h x}^{k},{\h u}^{k};\delta)
-\varrho_1\|{\h x}^k-{\h x}^{k+1}\|^2-\varrho_2\|{\h u}^k-{\h u}^{k+1}\|^2.
 \end{eqnarray*}
By using
$\theta_k f({K}{\h x}^k)=\varphi({\h x}^{k},{\h u}^{k};\delta)$, the desired inequality follows.
\end{proof}

\begin{remark}\label{remk1}
	Since ${\rm int}({\text{\rm dom}}f)\neq \emptyset$ and
	${K}({\cal S})\subseteq {\rm int}(\text{dom} f)$, there exist two positive scalars $m$ and $M$ such that
	$$0<m<
	\inf\limits_{k\in{\mathbb N}}f({K}{\h x}^k)\le \sup\limits_{k\in{\mathbb N}}f({K}{\h x}^k)\le M.$$
\end{remark}

\begin{remark}({\it Boundedness of the sequence})
Since  $\{{\h x}^k\}$ is bounded,
and $int({\text{\rm dom}}f)\neq \emptyset$,  $\{{\h y}^k\}$ is bounded   \cite[Theorem 23.4]{Rock70}.
Hence, there exists a constant ${\cal M}$ such that $\|{\h x}^k\|\le {\cal M}$ for all $k$. Then
$\|{\h u}^k\|\le \|{\h u}^0\| + \frac{\sigma\mathcal{M}}{1-|\sigma-1|}$ when $0<\sigma<2$. %In addition, we see that $\{{\h z}^k\}$ is  bounded.
\end{remark}

\begin{theorem}\label{subsequentialtoStat}
Let the sequence $\{{\bm W}^k\}$ be generated by (\ref{SplitI}) and $\Omega$ represents
the set of accumulation points of $\{{\bm W}^k\}$.  Suppose that Assumptions \ref{ass1} and \ref{ass2} hold.
The parameters of  $\sigma$ and $\delta$ are defined in (\ref{SplitI}) satisfying $0<\sigma<2$ and $0<\delta<1/{L_{\nabla h}}$.
Then,
\begin{itemize}
\item[(i)]$\lim_{k\to\infty}\theta_k$ exists, denoted by ${\overline \theta}$, and $\overline{\theta}\ge 0$;
    \item[(ii)]
    $\vartheta({\h x},{\h u},\delta) \equiv{\overline \theta}$ on $\Omega$ where
 $\vartheta$ is defined in (\ref{Lfun});
\item[(iii)]
 Any accumulation point $\overline{\h x}$ of $\{{\h x}^k\}$ is a (limiting)  lifted stationary point (\ref{PForm}).
    Moreover, it holds that $\overline{\theta}=\frac{{g({\overline{\h x}})}+h(\overline{\h x})}{f({K}{\overline{\h x}})}$ for any accumulation point ${\overline{\h x}}$;
\item[(iv)]It holds that
\begin{eqnarray*}\label{eqde}\lim_{k\to+\infty}\frac{\langle {K}{\h x}^{k+1}, {\h y}^{k+1} \rangle -f^*({\h y}^{k+1})}{f({K}{\h x}^{k})}=1.\end{eqnarray*}

\end{itemize}

\end{theorem}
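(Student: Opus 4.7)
My plan treats (i) as the foundation from which (ii)--(iv) follow by standard limit-passage along a subsequence. For (i), I would divide the inequality in Theorem \ref{DescentLemma} by $f(K{\h x}^{k+1})$, which is bounded below by a positive constant (Remark \ref{remk1}), and use the definition (\ref{theta}) to rewrite the left-hand side as $\theta_{k+1}-\theta_k$, yielding
\[
\theta_{k+1}-\theta_k\le-\frac{\varrho_1\|{\h x}^k-{\h x}^{k+1}\|^2+\varrho_2\|{\h u}^k-{\h u}^{k+1}\|^2}{f(K{\h x}^{k+1})}.
\]
Since $\varphi\ge 0$ (from $\alpha\ge 0$ and ${\h x}^{k+1}\in{\cal S}$) forces $\theta_k\ge 0$, and $\varrho_1,\varrho_2>0$ under the stated ranges of $\sigma$ and $\delta$, the nonincreasing bounded sequence $\{\theta_k\}$ converges to some $\overline\theta\ge 0$. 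Telescoping also produces $\sum_k(\|{\h x}^k-{\h x}^{k+1}\|^2+\|{\h u}^k-{\h u}^{k+1}\|^2)<\infty$; combined with $\sigma({\h u}^k-{\h x}^{k+1})={\h u}^k-{\h u}^{k+1}$ from (\ref{usub}), all three of $\|{\h x}^{k+1}-{\h x}^k\|$, $\|{\h u}^{k+1}-{\h u}^k\|$, and $\|{\h u}^k-{\h x}^{k+1}\|$ vanish, which I will use freely below.

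For (ii) and the moreover-clause of (iii), fix an accumulation point with ${\bm W}^{k_j}\to({\overline{\h x}},{\overline{\h y}},{\overline{\h u}})\in\Omega$. The vanishing increments force $\overline{\h u}=\overline{\h x}$ and ${\h x}^{k_j+1}\to\overline{\h x}$. Convexity of $f$ and $K{\cal S}\subseteq\inte(\dom f)$ give continuity of $f\circ K$ in a neighborhood of $\overline{\h x}$, so $f(K{\h x}^{k_j+1})\to f(K\overline{\h x})>0$. To upgrade the trivial lsc bound on $g$ to the equality $g({\h x}^{k_j+1})\to g(\overline{\h x})$, I would test the minimization in (\ref{xsub}) at $\overline{\h x}$, take $\limsup_j$, and combine with lsc. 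Consequently $\varphi({\h x}^{k_j+1},{\h u}^{k_j+1};\delta)\to g(\overline{\h x})+h(\overline{\h x})$ (the quadratic and indicator terms collapse), and dividing by $f(K\overline{\h x})$ identifies $\overline\theta$ with $\vartheta(\overline{\h x},\overline{\h u};\delta)=(g(\overline{\h x})+h(\overline{\h x}))/f(K\overline{\h x})$, which establishes both (ii) and the $\overline\theta$-identity in (iii).

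For the lifted stationarity in (iii), the first-order optimality condition of (\ref{xsub}) with the sum rule $\partial(g+\iota_{\cal S})=\partial g+\partial\iota_{\cal S}$ licensed by Assumption \ref{ass2} reads
\[
\theta_k K^\top{\h y}^{k+1}-\tfrac{1}{\delta}({\h x}^{k+1}-{\h u}^k)-\nabla h({\h x}^k)\in\partial g({\h x}^{k+1})+\partial\iota_{\cal S}({\h x}^{k+1}).
\]
Passing to the limit along the subsequence using outer semicontinuity of $\partial g$ (which requires the $g$-continuity upgrade from the previous paragraph), $\partial\iota_{\cal S}$, and $\partial f$ (so that $\overline{\h y}\in\partial f(K\overline{\h x})$ from ${\h y}^{k+1}\in\partial f(K{\h x}^k)$), together with ${\h x}^{k+1}-{\h u}^k\to 0$, yields $\overline\theta K^\top\overline{\h y}\in\partial g(\overline{\h x})+\partial\iota_{\cal S}(\overline{\h x})+\nabla h(\overline{\h x})$. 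Multiplying through by $f(K\overline{\h x})$ and substituting $\overline\theta f(K\overline{\h x})=g(\overline{\h x})+h(\overline{\h x})$ produces exactly the lifted-stationarity inclusion.

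Part (iv) drops out cleanly from a Fenchel-Young identity. Since ${\h y}^{k+1}\in\partial f(K{\h x}^k)$, equality in Fenchel-Young at $(K{\h x}^k,{\h y}^{k+1})$ gives $\langle K{\h x}^k,{\h y}^{k+1}\rangle-f^*({\h y}^{k+1})=f(K{\h x}^k)$, so the numerator of the ratio in (iv) equals $f(K{\h x}^k)+\langle K({\h x}^{k+1}-{\h x}^k),{\h y}^{k+1}\rangle$; boundedness of $\{{\h y}^k\}$ (from the boundedness remark following Theorem \ref{DescentLemma}) paired with $\|{\h x}^{k+1}-{\h x}^k\|\to 0$ kills the correction term, and dividing by $f(K{\h x}^k)\ge m>0$ yields the limit $1$. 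The two items I expect to demand the most care are the $g$-continuity upgrade, without which outer semicontinuity of $\partial g$ does not propagate the correct value at the limit and the stationarity step in (iii) breaks, and verifying that $\varrho_1,\varrho_2>0$ throughout the stated ranges $0<\delta<1/L_{\nabla h}$ and $0<\sigma<2$, since the entire argument leans on strict descent with strictly positive coefficients.
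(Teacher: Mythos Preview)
Your proposal is correct and follows essentially the same route as the paper: divide the descent inequality of Theorem~\ref{DescentLemma} by $f(K{\h x}^{k+1})$ for (i), combine lower semicontinuity of $\vartheta$ with the optimality test of (\ref{xsub}) at $\overline{\h x}$ for (ii) and the $\overline\theta$-identity in (iii), pass to the limit in the first-order conditions of (\ref{xsub}) for the stationarity in (iii), and invoke the Fenchel--Young equality at $(K{\h x}^k,{\h y}^{k+1})$ for (iv). The only cosmetic difference is that the paper organizes (ii) as a two-sided squeeze on $\vartheta$ rather than first isolating the convergence $g({\h x}^{k_j+1})\to g(\overline{\h x})$, but the underlying computation is identical.
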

\begin{proof}
(i) It follows Theorem \ref{DescentLemma} that
\begin{eqnarray*}&&\theta_{k+1}\le \theta_k
-\varsigma_1\|{\h x}^k-{\h x}^{k+1}\|^2-\varsigma_2
\|{\h u}^k-{\h u}^{k+1}\|^2,\label{thetades} \end{eqnarray*}
where $\varsigma_i=\frac{c_i}{M}$ ($i=1,2$).
Consequently, the sequence of $\{\theta_k\}$
is nonincreasing. In addition,  $\theta_k\ge 0$.
Thus, $\lim_{k\to\infty}\theta_k={\overline{\theta}}$ exists. Obviously, $\overline{\theta}\ge 0$.
Moreover,
\begin{eqnarray*}\label{acc1} \|{\h x}^k-{\h x}^{k+1}\|\rightarrow 0,\;\|{\h u}^k-{\h u}^{k+1}\|\rightarrow 0,\end{eqnarray*}
as $k\to+\infty$.

\noindent (ii) Suppose that ${\hat {\bm W}}=({\hat{\h x}},{\hat {\h y}},{\hat{\h u}})\in \Omega$. Then, there exists a subsequence $\{{\bm W}^{k_j}\}$ such that
$\lim_{j\to\infty}\|{\bm W}^{k_j}-{\hat {\bm W}}\|=0.$
 Since  $\varphi$ is lower semicontinuous and
$f$ is continuous, $\vartheta$ is lower semicontinuous on ${\cal S}$.
Thus, we have that

\begin{eqnarray}\label{objpro}&&\vartheta({\hat{\h x}},{\hat{\h u}},\delta) \le\varliminf\limits_{j\to +\infty}\frac{\varphi({\h x}^{k_j},{\h u}^{k_j},\delta)}{f({K}{\h x}^{k_j})}=\lim_{j\to+\infty} \frac{\varphi({\h x}^{k_j},{\h u}^{k_j},\delta)}{f({K}{\h x}^{k_j})}={\overline\theta},\end{eqnarray}
 where the first equality is due to the lower semicontinuity of $\vartheta$.

On the other hand, by invoking (\ref{xsub}), we can derive that
\begin{eqnarray*}
	&& g(\h x^{k_{j}+1}) + \langle \nabla h(\h x^{k_{j}}), \h x^{k_{j}+1} - \hat{\h x}\rangle + \frac{1}{2\delta}||\h x^{k_{j}+1} - \h u^{k_{j}}||_{2}^{2}\notag \\
	&& \leqslant g(\hat{\h x}) + \theta_{k_{j}}\langle {K}^{\top}\h y^{k_{j}+1}, \h x^{k_{j}+1} - \hat{\h x}\rangle + \frac{1}{2\delta}||\hat{\h x} - \h u^{k_{j}}||_{2}^{2}.\label{g_sub_hat}
\end{eqnarray*}
Since $h$ is Lipschitz continuous with constant $L_{\nabla h}$, it holds that:
\begin{eqnarray*}
	h(\h x^{k_{j}+1}) \leqslant h(\hat{\h x}) + \langle \nabla h(\hat{\h x}), \h x^{k_{j}+1}-\hat{\h x}\rangle + \frac{L_{\nabla h}}{2}||\h x^{k_{j}+1} - \hat{\h x}||^{2}.\label{h_sub_hat}
\end{eqnarray*}
Adding the above two inequalities leads to
\begin{eqnarray}
	&& g(\h x^{k_{j}+1}) + h(\h x^{k_{j}+1}) + \frac{1}{2\delta}||\h x^{k_{j}+1} - \h u^{k_{j}}||_{2}^{2}\notag \\
	&& \leqslant g(\hat{\h x}) + h(\hat{\h x}) + \frac{1}{2\delta}||\hat{\h x} - \h u^{k_{j}}||_{2}^{2} + \frac{L_{\nabla h}}{2}||\h x^{k_{j}+1} - \hat{\h x}||^{2} \notag\\
	&& + \theta_{k_{j}}\langle {K}^{\top}\h y^{k_{j}+1}, \h x^{k_{j}+1} - \hat{\h x}\rangle - \langle \nabla h(\h x^{k_{j}}) - \nabla h(\hat{\h x}), \h x^{k_{j}+1} - \hat{\h x}\rangle. \label{T32upper1}
\end{eqnarray}
Combining (\ref{T32upper1}) with the definition of $\varphi$, we have
\begin{eqnarray}
	\varphi({\h x}^{k_{j}+1},{\h u}^{k_{j}+1},\delta) \leqslant &&\varphi(\hat{\h x},{\h u}^{k_{j}},\delta) + \theta_{k_{j}}\langle {K}^{\top}\h y^{k_{j}+1}, \h x^{k_{j}+1} - \hat{\h x}\rangle\notag \\
	&&  - \langle \nabla h(\h x^{k_{j}}) - \nabla h(\hat{\h x}), \h x^{k_{j}+1} - \hat{\h x}\rangle\notag  \\
	&& + \frac{L_{\nabla h}}{2}||\h x^{k_{j}+1} - \hat{\h x}||^{2}\notag.
\end{eqnarray}
Dividing both sides of the above equation by $f({K}\h x^{k_{j}+1})$, it leads to:
\begin{eqnarray*}
	\theta_{k_{j}+1} \leqslant &&\frac{\varphi(\hat{\h x},{\h u}^{k_{j}},\delta)}{f({K}\h x^{k_{j}+1})} + \frac{\theta_{k_{j}}}{f({K}\h x^{k_{j}+1})}\langle {K}^{\top}\h y^{k_{j}+1}, \h x^{k_{j}+1} - \hat{\h x}\rangle \\
	&& - \frac{1}{f({K}\h x^{k_{j}+1})}\langle \nabla h(\h x^{k_{j}}) - \nabla h(\hat{\h x}), \h x^{k_{j}+1} - \hat{\h x}\rangle \\
	&& + \frac{L_{\nabla h}}{2f({K}\h x^{k_{j}+1})}||\h x^{k_{j}+1} - \hat{\h x}||^{2}\notag.
\end{eqnarray*}
Note that $\varphi(\hat{\h x},\h u, \delta)$ is continuous with respect to $\h u$, $f$ is continuous on $\cal S$, and $f({K}\h x^{k_{j}+1})>m$, $\{\h y^{k_{j}+1}\}$ is bounded and $\lim_{j\rightarrow\infty}\h x^{k_{j}+1} = \hat{\h x}, \lim_{j\rightarrow\infty}\h u^{k_{j}} = \hat{\h u}$,
$\lim_{j\rightarrow\infty}\|\nabla h(\h x^{k_{j}}) - \nabla h(\hat{\h x})\|=0$ and $\lim_{j\rightarrow\infty} \theta_{k_{j}+1} = \bar{\theta}$. Consequently, as $j\rightarrow\infty$, we obtain:
\begin{eqnarray}
	\bar{\theta} \leqslant \vartheta({\hat{\h x}},{\hat{\h u}},\delta). \label{objpro2}
\end{eqnarray}
Then assertion (ii) follows from (\ref{objpro}) and (\ref{objpro2}).
%\\
\\
(iii) Invoking the optimality condition of (\ref{SplitI}), we have that
\begin{eqnarray*}\left\{\begin{array}{l}{\h y}^{k+1}\in \partial f({K}{\h x}^k),\\[0.1cm]	
{{\bf 0}\in \partial \iota _{\cal S}({{\h x}^{k+1}}) + \partial g({\h x}^{k+1})+({\h x}^{k+1}-{\h u}^k)/\delta+\nabla h({\h x}^k)-\theta_k {K}^\top {\h y}^{k+1}},\\[0.1cm]
{\h u}^{k+1}={\h u}^k-\sigma({\h u}^k-{\h x}^{k+1}).\end{array}\right.
\end{eqnarray*}
Let $\overline{\bm W}=(\overline{\h x},{\overline{\h y}},{\overline{\h u}})\in \Omega$ and let $\{{\bm W}^{k_j}\}$ be a subsequence converging  to $\overline{\bm W}$.
Then,  $\overline{\bm W}$ satisfies
\begin{eqnarray*}\left\{\begin{array}{l}{\overline{\h y}}\in \partial f({K}{\overline{\h x}}),\\
{{\bf 0}\in \partial \iota _{\cal S}({\overline{\h x}}) + \partial g({\overline{\h x}})+\nabla h({\overline{\h x}})-{\overline\theta} {K}^\top {\overline{\h y}}},\\
{\overline{\h x}}={\overline{\h u}}.\end{array}\right.
\end{eqnarray*}
It leads to
${\bf 0}\in  \partial \iota _{\cal S}({\overline{\h x}})+ \partial g({\overline{\h x}})+\nabla h({\overline{\h x}})-{\overline\theta} {K}^\top \partial f({K}{\overline{\h x}}).$
From (\ref{objpro}), we see that
$${\overline{\theta}}=\vartheta({\overline{\h x}},{\overline{\h u}},\delta)=\frac{g({\overline{\h x}})+h({\overline{\h x}})}{f({K}{\overline{\h x}})}.$$
%It further leads to
%\begin{eqnarray}\label{thestat}{\overline{\theta}}=\frac{g({\overline{\h x}})+h({\overline{\h x}})}{f(K{\overline{\h x}})}.\end{eqnarray}
 Combining these together, we see that ${\overline{\h x}}$  be a (limiting) lifted stationary point of (\ref{PForm}).\\
(iv)
 Invoking the fact of
$$\langle {\h y}^{k+1},{K}({\h x}^k-{\h x}^{k+1})\rangle =f({K}{\h x}^k)-\left(\langle {K}{\h x}^{k+1},{\h y}^{k+1} \rangle-f^*({\h y}^{k+1})\right),$$
then dividing both sides by $f({K}{\h x}^k)$ ($f({K}{\h x}^k)>m>0$),
we have that
\begin{eqnarray*}&& \frac{|f({K}{\h x}^k)-\left[ \langle {K}{\h x}^{k+1},{\h y}^{k+1}\rangle -f^*({\h y}^{k+1})\right]|}{f({K}{\h x}^k)}\nn\\
&&=\frac{|\langle {\h y}^{k+1},{K}({\h x}^k-{\h x}^{k+1})\rangle |}{f({K}{\h x}^k)}\le \frac{|\langle {\h y}^{k+1},{K}({\h x}^k-{\h x}^{k+1})\rangle |}{m}.\end{eqnarray*}
Then,  using ${\h x}^k-{\h x}^{k+1}\rightarrow 0$ and the boundedness of $\{\h y^k\}$, it leads to
\begin{eqnarray*}\label{fraclim}\lim_{k\to+\infty}\frac{|f({K}{\h x}^k)-\left[ \langle {K}{\h x}^{k+1},{\h y}^{k+1}\rangle -f^*({\h y}^{k+1})\right]|}{f({K}{\h x}^k)}=0.\end{eqnarray*}
Thus, the assertion (iv) follows directly.
%\qed
\end{proof}

Below, we present a uniform lower and upper bounds for the sequence $\{\nu^k\}$ defined by
\begin{eqnarray*}\label{nu} \nu^k:=\langle {K}{\h x}^k, {\h y}^k \rangle -f^*({\h y}^k).\end{eqnarray*}
\begin{lemma}\label{positive}Let the sequence $\{{\bm W}^k\}$ be generated by (\ref{SplitI}). Suppose that Assumption \ref{ass1}  holds.
Let $m$ and $M$ be the two positive scalars as indicated in Remark \ref{remk1}. Then,  it holds that $0<m\le \nu^k\le f({K}{\h x}^k)\le M$ for sufficiently large $k$.
\end{lemma}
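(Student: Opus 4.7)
The plan is to exploit the Fenchel--Young identity coming directly from the subgradient step (\ref{ysub}), together with the asymptotic vanishing of consecutive iterate differences already established in Theorem \ref{subsequentialtoStat}(i). Since $\h y^k\in\partial f(K\h x^{k-1})$ by (\ref{ysub}), the Fenchel--Young equality gives
\begin{equation*}
\langle K\h x^{k-1},\h y^k\rangle - f^*(\h y^k) = f(K\h x^{k-1}).
\end{equation*}
Writing $K\h x^k = K\h x^{k-1}+K(\h x^k-\h x^{k-1})$ and substituting yields the useful decomposition
\begin{equation*}
\nu^k = f(K\h x^{k-1}) + \langle \h y^k, K(\h x^k-\h x^{k-1})\rangle,
\end{equation*}
which splits $\nu^k$ into a bounded-below ``primary'' piece and a vanishing ``correction'' piece.

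For the upper bound, I would apply the Fenchel--Young inequality at the current point, giving $\langle K\h x^k,\h y^k\rangle - f^*(\h y^k) \le f(K\h x^k)$; Remark \ref{remk1} then furnishes $f(K\h x^k)\le M$.

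For the lower bound, the key observation is that Remark \ref{remk1} deliberately takes $m$ to be a \emph{strict} lower bound, so the slack $\epsilon := \inf_{k\in\mathbb{N}} f(K\h x^k) - m$ is strictly positive. Theorem \ref{subsequentialtoStat}(i) provides $\|\h x^k-\h x^{k-1}\|\to 0$, and the boundedness remark preceding it guarantees $\{\h y^k\}$ is bounded; hence
\begin{equation*}
|\langle \h y^k, K(\h x^k-\h x^{k-1})\rangle| \le \|K\|\,\|\h y^k\|\,\|\h x^k-\h x^{k-1}\| \longrightarrow 0.
\end{equation*}
For all sufficiently large $k$ the modulus of the correction term is at most $\epsilon$, and the decomposition then yields
\begin{equation*}
\nu^k \ge f(K\h x^{k-1}) - \epsilon \ge \inf_{k\in\mathbb{N}} f(K\h x^k) - \epsilon = m.
\end{equation*}

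The main (mild) obstacle is simply noticing why one needs $m$ to be a \emph{strict} lower bound in Remark \ref{remk1}: the correction term only vanishes asymptotically, so without the positive slack $\epsilon$ one could at best conclude $\liminf \nu^k \ge m$, not $\nu^k \ge m$ eventually. Apart from that, the argument is a direct combination of Fenchel--Young (equality for the subgradient we just computed, inequality for the shifted point) with the already-proved consecutive-iterate convergence.
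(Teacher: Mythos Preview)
Your proof is correct and follows essentially the same route as the paper: both arguments rest on the Fenchel--Young identity $\nu^{k+1}=f(K\h x^k)+\langle \h y^{k+1},K(\h x^{k+1}-\h x^k)\rangle$ together with $\|\h x^{k+1}-\h x^k\|\to 0$, the boundedness of $\{\h y^k\}$, and the strict slack $\inf_k f(K\h x^k)-m>0$ from Remark~\ref{remk1}. The only cosmetic difference is that the paper packages this decomposition into Theorem~\ref{subsequentialtoStat}(iv) (the limit $\nu^{k+1}/f(K\h x^k)\to 1$) and then cites it, whereas you re-derive it in place; you also make explicit the upper bound $\nu^k\le f(K\h x^k)$ via the Fenchel--Young inequality, which the paper's proof leaves implicit.
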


\begin{proof} By invoking the assertion (iv) of Theorem \ref{subsequentialtoStat},
$\frac{\nu^{k+1}}{f({K}{\h x}^k)}\rightarrow 1 \;\mbox{as}\; k\to +\infty$
 and $f({K}{\h x}^k)>m$ for all $k$, then there exists an index ${\hat K}$ such that
 $ \nu^k \ge m$ when $k\ge{\hat K}$.
\end{proof}
Next, let $m >0$ be the scalar introduced in Remark \ref{remk1},  $\delta$ is defined in (\ref{SplitI}), and the following merit function $\varpi:\{({\h x},{\h y}) \in {\mathbb R}^n \times \text{\rm{dom}} f^* : \langle  {K} {\h x}, {\h y}\rangle -f^*({\h y})> m/2 \}  \times {\mathbb R^n}\rightarrow \overline{{\mathbb \R}}$ defined by
\begin{align*} \varpi ({\h x},{\h y},{\h u})
= & \ \frac{\varphi({\h x},{\h u})}{\langle {K}{\h x},{\h y}\rangle-f^*({\h y})},\end{align*}
where $\varphi ({\h x},{\h u})=\varphi ({\h x},{\h u},\delta)$ is used for brevity.
%where
%$$\Delta({\h x},{\h z},{\h u},{\h v})=\langle{\h z},
%A{\h x}\rangle-g^*({\h z})+h({\h x})+\frac{\delta}{2}\|{\h x}-{\h u}\|^2-
%\frac{\alpha}{2}\|{\h z}-{\h v}\|^2.$$
\begin{theorem}\label{subsequential}
Let the sequence $\{{\bm W}^k\}$ be generated by (\ref{SplitI}). Suppose that Assumption \ref{ass1} holds.
The parameters   $0<\sigma<2$ and $0<\delta<1/{L_{\nabla h}}$ are defined in (\ref{SplitI}).
  Let  $\Omega$ represent
the set of accumulation points of $\{{\bm W}^k\}$.
Then,
\begin{itemize}
%\item[(i)] any accumulation point of $\{{\bm W}^k\}$ is a (limiting) strong lifted stationary point of (\ref{PForm});
\item[(i)] there exists $c>0$  and for sufficiently large $k$ \begin{eqnarray*}\label{gammaDesc}
&&\varpi ({\h x}^{k+1},{\h y}^{k+1},{\h u}^{k+1})\le  \varpi ({\h x}^k,{\h y}^k,{\h u}^k)
-c\|{\h x}^k-{\h x}^{k+1}\|^2-c\|{\h u}^k-{\h u}^{k+1}\|^2;
\end{eqnarray*}
\item[(ii)]
$\lim_{k\to+\infty} \varpi ({\h x}^{k+1},{\h y}^{k+1},{\h u}^{k+1})$ exists
and equals to $\overline\theta$;
\item[(iii)] $\varpi \equiv{\overline\theta}$ on $\Omega$.
\end{itemize}
\end{theorem}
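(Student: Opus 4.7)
The plan is to derive assertion (i) by dividing the refined descent inequality of Theorem~\ref{PriTheo} by $\nu^{k+1}$, obtain (ii) by sandwiching $\varpi({\bm W}^{k+1})$ between $\theta_{k+1}$ and $\theta_k$, and finally deduce (iii) via the Fenchel--Young equality at accumulation points. Using $\theta_k f(K{\h x}^k)=\varphi({\h x}^k,{\h u}^k;\delta)$ from (\ref{theta}), Theorem~\ref{PriTheo} reduces to
\[
\varphi({\h x}^{k+1},{\h u}^{k+1};\delta)\le \theta_k\nu^{k+1}-\varrho_1\|{\h x}^k-{\h x}^{k+1}\|^2-\varrho_2\|{\h u}^k-{\h u}^{k+1}\|^2.
\]
Dividing by $\nu^{k+1}$, which is strictly positive and bounded above by $M$ for sufficiently large $k$ thanks to Lemma~\ref{positive} and Remark~\ref{remk1}, and combining with the Fenchel--Young inequality $\nu^k\le f(K{\h x}^k)$ to conclude $\theta_k=\varphi({\h x}^k,{\h u}^k;\delta)/f(K{\h x}^k)\le \varphi({\h x}^k,{\h u}^k;\delta)/\nu^k=\varpi({\bm W}^k)$, immediately yields (i) with $c=\min\{\varrho_1,\varrho_2\}/M$.

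For (ii), I observe that the two-sided bound $\theta_{k+1}\le \varpi({\bm W}^{k+1})\le \theta_k$ holds for large $k$: the left inequality is Fenchel--Young ($\nu^{k+1}\le f(K{\h x}^{k+1})$) applied to the definitions of $\theta_{k+1}$ and $\varpi$, while the right inequality follows from the calculation above upon discarding the nonnegative squared-norm terms. Since $\theta_k\to\overline\theta$ by Theorem~\ref{subsequentialtoStat}(i), the sandwich forces $\varpi({\bm W}^k)\to\overline\theta$.

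For (iii), fix $(\hat{\h x},\hat{\h y},\hat{\h u})\in\Omega$ and a subsequence ${\bm W}^{k_j}\to(\hat{\h x},\hat{\h y},\hat{\h u})$. The inclusion ${\h y}^{k_j+1}\in\partial f(K{\h x}^{k_j})$ from (\ref{ysub}), together with $\|{\h x}^{k_j+1}-{\h x}^{k_j}\|\to 0$ (from Theorem~\ref{subsequentialtoStat}(i)) and the closedness of the graph of $\partial f$ for a convex $f$, yields $\hat{\h y}\in\partial f(K\hat{\h x})$. The Fenchel--Young equality then gives $\langle K\hat{\h x},\hat{\h y}\rangle-f^*(\hat{\h y})=f(K\hat{\h x})$, so
\[
\varpi(\hat{\h x},\hat{\h y},\hat{\h u})=\frac{\varphi(\hat{\h x},\hat{\h u})}{f(K\hat{\h x})}=\vartheta(\hat{\h x},\hat{\h u},\delta)=\overline\theta
\]
by Theorem~\ref{subsequentialtoStat}(ii).

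The main delicacy is tightening the Fenchel--Young inequality $\nu^k\le f(K{\h x}^k)$ in the limit: asymptotically it closes via Theorem~\ref{subsequentialtoStat}(iv), and at any accumulation point it closes exactly once $\hat{\h y}$ is identified as a subgradient of $f$ at $K\hat{\h x}$, which itself relies on the vanishing successive differences of $\{{\h x}^k\}$ to reconcile the shifted index in (\ref{ysub}). Once this gap is controlled, the remaining arguments are routine rearrangements of Theorem~\ref{PriTheo}.
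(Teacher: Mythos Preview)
Your arguments for (i) and (ii) coincide with the paper's: both divide the rearranged inequality of Theorem~\ref{PriTheo} by $\nu^{k+1}$, invoke Lemma~\ref{positive} for the uniform bound $0<m\le\nu^{k+1}\le M$, and use the Fenchel--Young inequality $\nu^k\le f(K{\h x}^k)$ to obtain $\theta_k\le\varpi({\bm W}^k)$; the sandwich $\theta_{k+1}\le\varpi({\bm W}^{k+1})\le\theta_k$ then forces the limit in (ii). For (iii) you take a genuinely different and more transparent route than the paper. The paper argues by factoring $\varpi({\bm W}^{k_j+1})$ as a product of three ratios and appealing to Theorem~\ref{subsequentialtoStat}(iv) to control each factor; this reproduces the limit $\overline\theta$ along the sequence but leaves the evaluation of $\varpi$ at the accumulation point itself implicit. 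Your approach instead identifies $\hat{\h y}\in\partial f(K\hat{\h x})$ via the closed graph of $\partial f$, so the Fenchel--Young \emph{equality} holds at $(\hat{\h x},\hat{\h y})$ and $\varpi(\hat{\bm W})$ collapses to $\vartheta(\hat{\h x},\hat{\h u},\delta)$, which equals $\overline\theta$ by Theorem~\ref{subsequentialtoStat}(ii). This is cleaner and makes the value of $\varpi$ at the limit point explicit. One small indexing correction: since the convergent subsequence is ${\bm W}^{k_j}\to\hat{\bm W}$, you should use ${\h y}^{k_j}\in\partial f(K{\h x}^{k_j-1})$ (not ${\h y}^{k_j+1}$) together with ${\h x}^{k_j-1}\to\hat{\h x}$, which follows from $\|{\h x}^{k_j}-{\h x}^{k_j-1}\|\to 0$; the rest of your closed-graph argument then goes through unchanged.
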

\begin{proof}
(i)
 Invoking Theorem \ref{PriTheo} %leads to
% \begin{eqnarray*}
% &&\varphi({\h x}^{k+1},{\h z}^{k+1},{\h u}^{k+1})+\theta_k f(K{\h x}^k)-\theta_k\left( \langle K{\h x}^{k+1},{\h y}^{k+1} \rangle-f^*({\h y}^{k+1})\right)\nn\\
%&&\le \varphi({\h x}^k,{\h z}^k,{\h u}^k)
%-c_1\|{\h x}^k-{\h x}^{k+1}\|^2-c_2\|{\h u}^k-{\h u}^{k+1}\|^2.
% \end{eqnarray*}
and the fact that $\theta_k f({K}{\h x}^k)=\varphi({\h x}^k,{\h u}^k)$, we obtain that
\begin{eqnarray}\label{inequetat}
 &&\varphi({\h x}^{k+1},{\h u}^{k+1})\le \theta_k\left( \langle {K}{\h x}^{k+1},{\h y}^{k+1} \rangle-f^*({\h y}^{k+1})\right)\nn\\
&&
-c_1\|{\h x}^k-{\h x}^{k+1}\|^2-c_2\|{\h u}^k-{\h u}^{k+1}\|^2.
 \end{eqnarray}
Invoking Lemma \ref{positive}, $\nu^{k+1}=\langle {K}{\h x}^{k+1}, {\h y}^{k+1} \rangle -f^*({\h y}^{k+1})\ge m>0$
when $k\ge {\hat K}$. Dividing both sides of (\ref{inequetat}) by $\nu^{k+1}$,
we have that ($\eta_{k+1}:=\frac{\varphi^{k+1}}{\nu^{k+1}}$ where $\varphi^{k+1}:=\varphi({\h x}^{k+1},{\h u}^{k+1})$)
\begin{eqnarray}\label{Theo4:thetak}&&\eta_{k+1}\le \theta_k
-\frac{c_1}{M}\|{\h x}^k-{\h x}^{k+1}\|^2-\frac{c_2}{M}\|{\h u}^k-{\h u}^{k+1}\|^2.
\end{eqnarray}
Since  $\theta_k\ge 0$,
$\theta_k\le \eta_k$.
The assertion then follows immediately by setting ${c}:=\min(\frac{c_1}{M}, \frac{c_2}{M})$.\\
(ii) Since  $\eta_k\ge\theta_k\ge 0$, $\{\eta_k\}$ is  lower-bounded. This together the fact that $\{\eta_k\}$ is nonincreasing implies that $\lim_{k\to+\infty} \eta_k$ exists.
 We now show that this limit equals to $\overline\theta$.
 First, invoking (\ref{Theo4:thetak}),
 it yields that
$\lim_{k\to\infty}\eta_{k+1}\le
 \lim_{k\to\infty} \theta_k.$
 On the other hand,
 $\theta_k\le \eta_k$.
 Combining these, we have   $\lim_{k\to+\infty} \eta_k={\overline\theta}$.
Thus, the assertion (ii) follows.\\
(iii)
Suppose that ${\hat {\bm W}}=({\hat{\h x}},{\hat {\h y}},{\hat{\h u}})\in \Omega$. Then, there exists a subsequence $\{{\bm W}^{k_j}\}$ such that
$\lim_{j\to\infty}\|{\bm W}^{k_j}-{\hat {\bm W}}\|=0.$
Note that
\begin{eqnarray}\label{Theo4:key1} \lim_{j\to+\infty} \frac{\varphi({\h x}^{k_j+1},{\h u}^{k_j+1})}{f({K}{\h x}^{k_j+1})}={\overline{\theta}}\end{eqnarray}
 due to
  $\lim_{k\to+\infty} \theta_k={\overline\theta}$.
  In addition,
  \begin{eqnarray}\label{Theo4:key2}\lim_{j\to+\infty}\frac{f({K}{\h x}^{k_j+1})}{f({K}{\h x}^{k_j})}=1\end{eqnarray} due to
   ${K}{\h x}^k\in \text{\rm int}({\text{\rm dom}}f)$ and $\|{\h x}^{k_j}-{\h x}^{k_j+1}\|\to 0$.

\noindent Consequently, we have that
\begin{eqnarray*} &&\lim_{j\to+\infty} \frac{\varphi({\h x}^{k_j+1},{\h u}^{k_j+1})}{\nu^{k_j+1}} \nn\\
%&&=\lim_{j\to+\infty} \frac{\varphi({\h x}^{k_j+1},{\h z}^{k_j+1},{\h u}^{k_j+1})}{f(K{\h x}^{k_j+1})}\frac{f(K{\h x}^{k_j+1})}{f(K{\h x}^{k_j})}\frac{f(K{\h x}^{k_j})}{\nu^{k_j+1}}\nn\\
&&=\lim_{j\to+\infty} \frac{\varphi({\h x}^{k_j+1},{\h u}^{k_j+1})}{f({K}{\h x}^{k_j+1})}\times\lim_{j\to+\infty}\frac{f({K}{\h x}^{k_j+1})}{f({K}{\h x}^{k_j})}\times\lim_{j\to+\infty}\frac{f({K}{\h x}^{k_j})}{\nu^{k_j+1}}
=\overline{\theta}.\end{eqnarray*}
The last equality follows from (\ref{Theo4:key1}), (\ref{Theo4:key2}) and the
  assertion (iv) of Theorem \ref{subsequentialtoStat}.
%$$\lim_{k\to+\infty}\frac{\varphi({\h x}^{k},{\h z}^{k},{\h u}^{k},{\h v}^{k})}{f(K{\h x}^{k})}$$ exists.
Consequently, we show that assertion (iii) holds. %\qed
%By combining Theorem \ref{subsequentialtoStat} (ii), (iv) and $\|{\h v}^k-{\h v}^{k+1}\|\to 0$, the assertion follows directly. \qed
\end{proof}

\section{Global convergence}\label{sec6}

In this section, we establish the convergence of the sequence generated by FPSA under mild assumptions. To do so, we present two distinct settings in which we can bound the distance between the origin and the limiting subdifferential of ${\rm\varpi}$ and ${\rm\vartheta}$, respectively, in terms of the residual between two consecutive iterates.
\vspace{-0.1cm}
\subsection{\texorpdfstring{$f^*$}{fstar} satisfies the calm condition}\label{subsec61}
\begin{theorem}\label{residual}
Let the sequence $\{{\bm W}^k\}$ be generated by (\ref{SplitI}). Suppose that Assumptions \ref{ass1} and \ref{ass2} hold.
The parameters of  $\sigma$ and $\delta$ are defined in (\ref{SplitI}) satisfying $0<\sigma<2$ and $0<\delta<1/{L_{\nabla h}}$.
Suppose that $f^*$ is
calm at ${\h y}^k$ (for a sufficiently large $k$).
 Then there exists  $\zeta>0$ and $K_0>0$, such that
for any $k\ge K_0$,
\begin{eqnarray*}&&{\text{\rm dist}}(\h 0,\partial\varpi ({\h x}^{k+1},{\h y}^{k+1},{\h u}^{k+1}))
\le \zeta(\|{\h x}^k-{\h x}^{k+1}\|+\|{\h u}^k-{\h u}^{k+1}\|).\end{eqnarray*}
\end{theorem}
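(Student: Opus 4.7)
The plan is to exhibit a specific element of $\hat\partial\varpi({\h x}^{k+1},{\h y}^{k+1},{\h u}^{k+1})\subseteq\partial\varpi({\h x}^{k+1},{\h y}^{k+1},{\h u}^{k+1})$ whose norm is bounded by $\zeta(\|{\h x}^k-{\h x}^{k+1}\|+\|{\h u}^k-{\h u}^{k+1}\|)$. The assumed calmness of $f^*$ at ${\h y}^{k+1}$ allows Lemma \ref{sharpcalm} (applied with $\phi_1=\varphi(\cdot,{\h u})$, $\phi_2^*=f^*$, and $\langle{\h x},{\h y}\rangle$ replaced by $\langle K{\h x},{\h y}\rangle$) to yield
\[
\hat\partial_{\h x}\varpi=\frac{\hat\partial_{\h x}\varphi-\eta K^{\top}{\h y}}{\nu},\qquad \hat\partial_{\h y}\varpi=\frac{\varphi(\hat\partial f^*({\h y})-K{\h x})}{\nu^2},\qquad \hat\partial_{\h u}\varpi=\frac{\hat\partial_{\h u}\varphi}{\nu},
\]
where $\nu=\langle K{\h x},{\h y}\rangle-f^*({\h y})$ and $\eta=\varphi/\nu$. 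Lemma \ref{positive} ensures $\nu^{k+1}\ge m/2$ for $k\ge K_0$, and $\eta_{k+1}$ is bounded by Theorem \ref{subsequential}.

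The ${\h y}$- and ${\h u}$-components are immediate. For ${\h u}$, $\hat\partial_{\h u}\varphi({\h x}^{k+1},{\h u}^{k+1})=\{({\h u}^{k+1}-{\h x}^{k+1})/\delta\}$ and (\ref{usub}) gives ${\h u}^{k+1}-{\h x}^{k+1}=-\frac{1-\sigma}{\sigma}({\h u}^{k+1}-{\h u}^k)$, yielding a partial subgradient of norm at most $C\|{\h u}^{k+1}-{\h u}^k\|$. For ${\h y}$, the update ${\h y}^{k+1}\in\partial f(K{\h x}^k)$ together with Fenchel conjugacy produces $K{\h x}^k\in\partial f^*({\h y}^{k+1})$, so $K({\h x}^k-{\h x}^{k+1})\in\hat\partial f^*({\h y}^{k+1})-K{\h x}^{k+1}$, giving a partial subgradient of norm at most $C\|{\h x}^{k+1}-{\h x}^k\|$.

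For the ${\h x}$-component, Assumption \ref{ass2} validates the sum rule $\hat\partial_{\h x}\varphi=\partial g+\partial\iota_{\cal S}+\nabla h+({\h x}-{\h u})/\delta$, and the first-order optimality of (\ref{xsub}) furnishes the element
\[
w_{\h x}^{k+1}:=\theta_k K^\top{\h y}^{k+1}+[\nabla h({\h x}^{k+1})-\nabla h({\h x}^k)]+({\h u}^k-{\h u}^{k+1})/\delta \in \hat\partial_{\h x}\varphi({\h x}^{k+1},{\h u}^{k+1}).
\]
Inserting this into $\hat\partial_{\h x}\varpi$ yields the candidate
\[
\frac{(\theta_k-\eta_{k+1})K^\top{\h y}^{k+1}+[\nabla h({\h x}^{k+1})-\nabla h({\h x}^k)]+({\h u}^k-{\h u}^{k+1})/\delta}{\nu^{k+1}}.
\]
The $L_{\nabla h}$-Lipschitz continuity of $\nabla h$ and the ${\h u}$-difference are directly controlled; the main obstacle is bounding $|\theta_k-\eta_{k+1}|$.

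For this last step, I use the decomposition
\[
\theta_k-\eta_{k+1}=\theta_k\cdot\frac{\nu^{k+1}-f(K{\h x}^k)}{\nu^{k+1}}+\frac{\varphi^k-\varphi^{k+1}}{\nu^{k+1}}.
\]
The Fenchel identity $\nu^{k+1}-f(K{\h x}^k)=\langle K({\h x}^{k+1}-{\h x}^k),{\h y}^{k+1}\rangle$ and boundedness of $\{{\h y}^k\}$ handle the first summand. For the second, the $\frac{1}{\delta}$-strongly convex optimality inequality of (\ref{xsub}) evaluated at ${\h z}={\h x}^k$ yields $g({\h x}^{k+1})-g({\h x}^k)\le C\|{\h x}^{k+1}-{\h x}^k\|$, while the convex subgradient inequality for $g+\iota_{\cal S}$ at ${\h x}^k$, applied to the bounded selection $v^k:=\theta_{k-1}K^\top{\h y}^k-\nabla h({\h x}^{k-1})-({\h x}^k-{\h u}^{k-1})/\delta\in\partial(g+\iota_{\cal S})({\h x}^k)$ coming from the previous iteration's optimality, gives the reverse inequality $g({\h x}^k)-g({\h x}^{k+1})\le C\|{\h x}^{k+1}-{\h x}^k\|$. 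Combined with the Lipschitzness of $h$ on ${\cal S}$ and local Lipschitzness of the quadratic penalty on bounded sets, this establishes $|\varphi^k-\varphi^{k+1}|\le C(\|{\h x}^{k+1}-{\h x}^k\|+\|{\h u}^{k+1}-{\h u}^k\|)$; summing the three partial bounds completes the proof with a suitable $\zeta>0$.
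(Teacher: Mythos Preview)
Your argument is correct and follows essentially the same route as the paper: the same candidate element of $\hat\partial\varpi$ built from the optimality condition of (\ref{xsub}), the same treatment of the ${\h y}$- and ${\h u}$-components via $K{\h x}^k\in\partial f^*({\h y}^{k+1})$ and (\ref{usub}), and the same decomposition of the ${\h x}$-component into a $\nabla h$/${\h u}$-difference piece and a $(\theta_k-\eta_{k+1})K^\top{\h y}^{k+1}$ piece (the paper writes the latter as $\text{II}=\text{III}+\text{IV}$, which is algebraically your two-term splitting of $\theta_k-\eta_{k+1}$ multiplied through by $\nu^{k+1}$).

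The one substantive difference is how you bound $|g({\h x}^{k+1})-g({\h x}^k)|$. The paper simply asserts that $g$ is Lipschitz on the (compact) closure of $\{{\h x}^k\}$ and uses the resulting constant $L_g$. You instead extract a two-sided estimate directly from the algorithm: the upper bound from the minimizing property of ${\h x}^{k+1}$ in (\ref{xsub}), and the lower bound from the convex subgradient inequality at ${\h x}^k$ using the bounded element $v^k\in\partial(g+\iota_{\cal S})({\h x}^k)$ supplied by the \emph{previous} iteration's optimality condition. Your variant is slightly more self-contained, since it does not rely on knowing that the closure of the iterates sits inside a region where $g$ is Lipschitz; the paper's route is shorter once that Lipschitz property is granted. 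Either way the final bound $|\varphi^k-\varphi^{k+1}|\le C(\|{\h x}^k-{\h x}^{k+1}\|+\|{\h u}^k-{\h u}^{k+1}\|)$ is the same, and the rest of the estimates coincide.
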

\begin{proof}  There exists an index $K_0$ such that $\nu^k\ge m>m/2$, and $f^*$ is
calm at ${\h y}^k$ when $k\ge K_0$.
 Note that
\begin{eqnarray*}{\text{\rm dist}}(\h 0,\partial\varpi ({\h x}^{k+1},{\h y}^{k+1},{\h u}^{k+1}))
\le
{\text{\rm dist}}(\h 0,{\hat\partial}\varpi ({\h x}^{k+1},{\h y}^{k+1},{\h u}^{k+1})).\end{eqnarray*}
\noindent Let
\begin{equation*}\label{diffwRv}
 \begin{split}
 &(\pmb{\xi}_{\h x}^{k+1},\pmb{\xi}_{\h y}^{k+1},\pmb{\xi}_{\h u}^{k+1})\in {\hat\partial}{\varpi} ({\h x}^{k+1},{\h y}^{k+1},{\h u}^{k+1}).
 \end{split}
 \end{equation*}

 Applying Fermat's rule to (\ref{xsub}),  we have that
 $$0\in {\hat\partial} \iota _{\cal S}({\h x}^{k+1})+\partial g({\h x}^{k+1})+\nabla h({\h x}^k)-\theta_k {K}^\top {\h y}^{k+1} +({\h x}^{k+1}-{\h u}^k)/\delta.$$
 It implies that $-[\partial g({\h x}^{k+1})+\nabla h({\h x}^k)-\theta_k {K}^\top {\h y}^{k+1} +({\h x}^{k+1}-{\h u}^k)/\delta]\in {\hat\partial} \iota _{\cal S}({\h x}^{k+1})$. Set ${\bm \upsilon}^{k+1}=-[\partial g({\h x}^{k+1})+\nabla h({\h x}^k)-\theta_k {K}^\top {\h y}^{k+1} +({\h x}^{k+1}-{\h u}^k)/\delta]$.
\noindent Invoking Lemma \ref{sharpcalm} with $\nu:=m/2$, we let
 \begin{equation}\label{diff}
 \begin{split}
 &\pmb{\xi}_{\h x}^{k+1}=\frac{[{\bm \upsilon}^{k+1}+\partial g({\h x}^{k+1}) +\nabla h({\h x}^{k+1})+ ({\h x}^{k+1}-{\h u}^{k+1})/\delta]\nu^{k+1}-\varphi^{k+1}{K}^\top {\h y}^{k+1}}{(\langle {K}{\h x}^{k+1},{\h y}^{k+1}\rangle-f^*({\h y}^{k+1}))^2},\\
 &\pmb{\xi}_{\h y}^{k+1}=\frac{-{K}{\h x}^{k+1}+{{\bm \chi}^{k+1}}}{(\langle {K}{\h x}^{k+1},{\h y}^{k+1}\rangle-f^*({\h y}^{k+1}))^2}\varphi^{k+1},\;\mbox{where}\;{\bm \chi}^{k+1}\in\partial f^*({\h y}^{k+1})\\
 &\pmb{\xi}_{\h u}^{k+1}=\frac{({\h u}^{k+1}-{\h x}^{k+1})/\delta}{\langle {K}{\h x}^{k+1},{\h y}^{k+1}\rangle-f^*({\h y}^{k+1})},
 \end{split}\end{equation}
 where $\varphi^{k+1}=\varphi({\h x}^{k+1},{\h u}^{k+1})$.
Consequently, $\pmb{\xi}_{\h x}^{k+1}\in {\hat\partial}_{\h x}{\varpi} ({\h x}^{k+1},{\h y}^{k+1},{\h u}^{k+1})$. We only need to get an estimation of $\|\pmb{\xi}_{\h x}^{k+1}\|$ because
 $${\text{dist}}({\bf 0},{\hat\partial_{\h x}} \varpi({\h x}^{k+1},{\h y}^{k+1},{\h u}^{k+1}))\le \|\pmb{\xi}_{\h x}^{k+1}\|.$$
 On the other hand, we obtain that
$\pmb{\xi}_{\h x}^{k+1}=\frac{\Sigma^{k+1}}{(\nu^{k+1})^2},$
 with
\begin{eqnarray}\label{SigmaxIneq} \Sigma^{k+1}&&=[\nabla h({\h x}^{k+1})-\nabla h({\h x}^k)+({\h u}^k-{\h u}^{k+1})/\delta+\theta_k {K}^\top {\h y}^{k+1}]\nn\\
&&\times[\langle {K}{\h x}^{k+1}, {\h y}^{k+1} \rangle -f^*({\h y}^{k+1})]-{K}^\top {\h y}^{k+1}\theta_{k+1} f({K}{\h x}^{k+1})\nn\\[0.2cm]
 &&=\underbrace{[\nabla h({\h x}^{k+1})-\nabla h({\h x}^k)+({\h u}^k-{\h u}^{k+1})/\delta]\times [\langle {K}{\h x}^{k+1},{\h y}^{k+1}\rangle-f^*({\h y}^{k+1})]}_{\rm{\bf I}} \nn\\[0.2cm]
  &&+\underbrace{\theta_k {K}^\top {\h y}^{k+1}[\langle {K}{\h x}^{k+1},{\h y}^{k+1}\rangle-f^*({\h y}^{k+1})]-{K}^\top {\h y}^{k+1}{\theta}^{k+1}f({K}{\h x}^{k+1})}_{\rm{\bf II}}.\end{eqnarray}
  Before bounding ${\rm{\bm I}}$ and ${\rm{\bm II}}$,
 we first present several properties.

(a) Let  $L_h$ denote the Lipschitz constant of $h$. Set $B_{\h x}:=\sup\limits_{k\in{\mathbb N}}\|{\h x}^k\|$, $B_{\h y}:=\sup\limits_{k\in{\mathbb N}}\|{\h y}^k\|$,
 %$B_{\h z}:=\sup\limits_{k\in{\mathbb N}}\|{\h z}^k\|$,
 $B_{\h u}:=\sup\limits_{k\in{\mathbb N}}\|{\h u}^k\|$ and
 all these $B_{\ast}$ are finite ($\ast={\h x},{\h y},{\h u}$).
 $B_{\nu}:=\sup\limits_{k\in{\mathbb N}}|{\nu}^k|<+\infty$.
 Note $\varphi^{k+1}=\theta_{k+1}f({K}{\h x}^{k+1})$.
  Since $\theta_k$ and $f({K}{\h x}^k)$ are bounded,  $B_\varphi:=\sup\limits_{k\in{\mathbb N}}|\varphi^k|<+\infty$.

   (b) Further, as  $\{{\h x}^k\} \subseteq (\dom g) \cap {\cal S}$, $g$ is Lipschitz continuous on the closure of $\{{\h x}^k\}$. We denote by $L_{g}$ the corresponding Lipschitz constant. $h$ is Lipschitz continuous on ${\cal S}$; denote its Lipschitz constant $L_{h}$.

(c)
  Furthermore,
  \begin{eqnarray*}\left|\|{\h x}^{k+1}-{\h u}^{k+1}\|^2-\|{\h x}^k-{\h u}^k\|^2\right|
 \le2(B_{\h x}+B_{\h u})\left(\|{\h x}^{k+1}-{\h x}^k \|+\|{\h u}^{k+1}-{\h u}^k \|\right).
  \end{eqnarray*}
  Combining the above properties, we obtain that

  \begin{eqnarray*}|\varphi^k-\varphi^{k+1}|\nn\le \bar\varrho_1\|{\h x}^k-{\h x}^{k+1}\|
  %+\varrho_2 \|{\h z}^k-{\h z}^{k+1}\|
  +\bar\varrho_2\|{\h u}^{k+1}-{\h u}^k \|.\end{eqnarray*}
  where $\bar\varrho_1=(B_{\h x}+B_{\h u})/\delta+L_h + L_g$, $\bar\varrho_2=(B_{\h x}+B_{\h u})/\delta$.

From the above properties, we have
 \begin{eqnarray}\label{boundI}\|{\rm{\bf I}}\|\le B_{\nu}(L_{\nabla h}\|{\h x}^k-{\h x}^{k+1}\|+\|{\h u}^k-{\h u}^{k+1}\|/\delta),\end{eqnarray}
  and
  \begin{eqnarray}\label{boundII} &&{\|{\rm{\bf II}}\|}=
  \left\|{K}^\top {\h y}^{k+1}\left[ \varphi^k \frac{\nu^{k+1}}{f({K}{\h x}^k)}-\varphi^{k+1}\right]\right\|\nn\\
  &&\le\underbrace{\left
  \|{K}^\top {\h y}^{k+1}\left[  \varphi^k \frac{\nu^{k+1}}{f({K}{\h x}^k)}-\varphi^{k+1} \frac{\nu^{k+1}}{f({K}{\h x}^k)}\right]\right\| }_{\bf III}
  +\underbrace{\left\|{K}^\top {\h y}^{k+1}\left[ \varphi^{k+1} \frac{\nu^{k+1}}{f({K}{\h x}^k)}-\varphi^{k+1}\right]\right\|}_{\bf IV}.\nn\\ \end{eqnarray}
\noindent Note that
\begin{eqnarray}\label{boundIII}&&\|{\rm{\bf III}}\|\le\frac{\|{K}\|B_{\h y}B_{\nu}}{m}|\varphi^{k+1}-\varphi^k|
\le \frac{\|{K}\|B_{\h y}B_{\nu}}{m}(\varrho_1\|{\h x}^k-{\h x}^{k+1}\|
  %+\varrho_2 \|{\h z}^k-{\h z}^{k+1}\|
  +\varrho_2\|{\h u}^{k+1}-{\h u}^k \| ),\nn\\\end{eqnarray}
 and
  \begin{eqnarray}\label{boundIV}&&\| {\rm{\bf IV}}\|\le \|{K}\|B_{\h y}\left|\varphi^{k+1} \frac{\langle {K}{\h x}^{k+1}, {\h y}^{k+1}\rangle-f^*({\h y}^{k+1})}{f({K}{\h x}^k)}-\varphi^{k+1}\right|\nn\\
  &&\le \frac{B_\varphi B_{\h y}^2\|{K}\|^2}{m}\|{\h x}^k-{\h x}^{k+1}\|.
  \end{eqnarray}

By combining (\ref{SigmaxIneq}), (\ref{boundI}), (\ref{boundII}),
(\ref{boundIII}) and (\ref{boundIV}), we have that
 \begin{eqnarray*} \|\Sigma^{k+1}\|\le\bar \vartheta_1\|{\h x}^k-{\h x}^{k+1}\|+\bar\vartheta_2\|{\h u}^k-{\h u}^{k+1}\|,\end{eqnarray*}
 with
  \begin{equation*}
 \begin{split}
 &\bar\vartheta_1:=B_{\nu} L_{\nabla h}+\bar\varrho_1\|{K}\| \frac{B_{\h y}B_{\nu}}{m}+\frac{B_\varphi B_{\h y}^2\|{K}\|^2}{m},\\
 &\bar\vartheta_2:=\frac{B_{\nu}}{\delta }+\bar\varrho_2\|{K}\|\frac{B_{\h y}B_{\nu}}{m}.\\
 \end{split}\end{equation*}
 It follows that
 \begin{eqnarray*} \|\pmb{\xi}_{\h x}^{k+1}\|\le \frac{\bar\vartheta_1}{m^2}\|{\h x}^k-{\h x}^{k+1}\|+\frac{\bar\vartheta_2}{m^2}\|{\h u}^k-{\h u}^{k+1}\|.\end{eqnarray*}
 Since ${K}{\h x}^{k}\in\partial f^*({\h y}^{k+1})$, we set ${\bm \chi}^{k+1}:={K}{\h x}^{k}$ in $\pmb{\xi}_{\h y}^{k+1}$ and  we have
 \begin{equation}\nn
 \begin{split}
 &\|\pmb{\xi}_{\h y}^{k+1}\|\le B_\varphi\frac{\|{K}\|}{m^2}\|{\h x}^k-{\h x}^{k+1}\|.\\
 \end{split}
 \end{equation}
 \begin{equation}\nn
 \begin{split}
 &\|\pmb{\xi}_{\h u}^{k+1}\|\le \frac{|1-\sigma|}{\delta m\sigma}\|{\h u}^k-{\h u}^{k+1}\|.\\
 \end{split}\end{equation}
 Finally, we know that
 \begin{eqnarray*}&&{\text{\rm dist}}(\h 0,\partial\varpi ({\h x}^{k+1},{\h y}^{k+1},{\h u}^{k+1}))
\le \zeta(\|{\h x}^k-{\h x}^{k+1}\|+\|{\h u}^k-{\h u}^{k+1}\|),\end{eqnarray*}
where $\zeta>0$.
The conclusion follows directly.%\qed
 \end{proof}

\subsection{\texorpdfstring{$f$}{f} is  ${\cal C}^1$  over an open set containing \texorpdfstring{$K({\cal S})$}{KS}}\label{subsec62}

In this subsection, our analysis will be based on the merit function \({\rm\vartheta}\). First, we recall the relevant calculus rules. Suppose Assumptions \ref{ass1} and \ref{ass2} hold. Let \(f\) be differentiable at \(K {\h x} \in \inte(\dom f)\) for \({\h x} \in {\cal S}\). Define \(\alpha_1 := \varphi({\h x}, {\h u}, \delta)\) and \(\alpha_2 := f(K {\h x})\), and assume that \(\alpha_1 > 0\). Then, the Fr\'{e}chet subdifferential of \({\rm\vartheta}\) at \(({\h x}, {\h u})\) is given by:
\[
\hat\partial {\rm\vartheta}({\h x}, {\h u}) = \left\{ (\pmb{\xi}_{\h x}, \pmb{\xi}_{\h u}) \ \middle| \
\begin{array}{l}
\pmb{\xi}_{\h x} \in \displaystyle{\frac{\alpha_2 (\partial g({\h x}) + \nabla h({\h x}) + \partial \iota_{\cal S}({\h x}) + ({\h x} - {\h u})/\delta) - \alpha_1 K^\top \nabla f(K {\h x})}{(f(K {\h x}))^2}}\\
\pmb{\xi}_{\h u} = \displaystyle{\frac{{\h u} - {\h x}}{\delta f(K {\h x})}}
\end{array}
\right\}.
\]

 \begin{theorem}\label{residualDiffF}
Suppose that $f$ is ${\cal C}^1$ over an open set containing $K({\cal S})$. Then there exists  $\varsigma>0$ such that for all $k$ sufficiently large,
\begin{eqnarray*}&&{\text{\rm dist}}(\h 0,\partial{\rm\vartheta} ({\h x}^{k+1},{\h u}^{k+1}))
\le \varsigma(\|{\h x}^k-{\h x}^{k+1}\|+\|{\h u}^k-{\h u}^{k+1}\|).\end{eqnarray*}
\end{theorem}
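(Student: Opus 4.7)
The plan is to mimic the structure of Theorem \ref{residual}, but to work directly with the simpler merit function $\vartheta$, taking advantage of the smoothness of $f$. Because $f$ is $\mathcal{C}^1$ on an open set containing $K(\mathcal{S})$ and $K(\mathcal{S})$ is compact, $\nabla f$ is bounded and Lipschitz (by our convention that $\mathcal{C}^1$ means differentiable with Lipschitz gradient), and the set-valued subgradient selection $\mathbf{y}^{k+1} \in \partial f(K\mathbf{x}^k)$ collapses to the single value $\mathbf{y}^{k+1} = \nabla f(K\mathbf{x}^k)$. This eliminates the need for $f^*$ and the calmness argument altogether.

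First, I would use the explicit formula for $\hat\partial \vartheta(\mathbf{x}^{k+1},\mathbf{u}^{k+1})$ displayed just before the theorem and select a particular element $(\pmb{\xi}_{\mathbf{x}}^{k+1},\pmb{\xi}_{\mathbf{u}}^{k+1})$. Applying Fermat's rule to the $\mathbf{x}$-subproblem (\ref{xsub}) gives
\begin{equation*}
-\bigl[(\mathbf{x}^{k+1}-\mathbf{u}^k)/\delta + \nabla h(\mathbf{x}^k) - \theta_k K^\top \nabla f(K\mathbf{x}^k)\bigr] \in \partial\iota_{\cal S}(\mathbf{x}^{k+1}) + \partial g(\mathbf{x}^{k+1}).
\end{equation*}
Substituting this choice into the bracket in the numerator of $\pmb{\xi}_{\mathbf{x}}^{k+1}$, and using $\varphi^{k+1} = \theta_{k+1} f(K\mathbf{x}^{k+1})$, the numerator simplifies to
\begin{equation*}
f(K\mathbf{x}^{k+1})\bigl[\nabla h(\mathbf{x}^{k+1})-\nabla h(\mathbf{x}^k)+(\mathbf{u}^k-\mathbf{u}^{k+1})/\delta\bigr] + f(K\mathbf{x}^{k+1})\bigl[\theta_k K^\top\nabla f(K\mathbf{x}^k) - \theta_{k+1} K^\top \nabla f(K\mathbf{x}^{k+1})\bigr].
\end{equation*}

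Next, I would bound each piece. The first bracket is directly controlled by the Lipschitz constant $L_{\nabla h}$, yielding the bound $L_{\nabla h}\|\mathbf{x}^k-\mathbf{x}^{k+1}\| + \|\mathbf{u}^k-\mathbf{u}^{k+1}\|/\delta$ times $f(K\mathbf{x}^{k+1}) \le M$. For the second bracket, I would split as
\begin{equation*}
\theta_k K^\top(\nabla f(K\mathbf{x}^k)-\nabla f(K\mathbf{x}^{k+1})) + (\theta_k-\theta_{k+1})K^\top\nabla f(K\mathbf{x}^{k+1}).
\end{equation*}
The first term is handled by boundedness of $\{\theta_k\}$ and Lipschitz continuity of $\nabla f$ on $K(\mathcal{S})$, giving a bound of the form $C_1\|\mathbf{x}^k-\mathbf{x}^{k+1}\|$. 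For the second term, I would bound $|\theta_k-\theta_{k+1}|$ using the identity
\begin{equation*}
\theta_k-\theta_{k+1} = \frac{\varphi^k-\varphi^{k+1}}{f(K\mathbf{x}^k)} + \varphi^{k+1}\cdot \frac{f(K\mathbf{x}^{k+1})-f(K\mathbf{x}^k)}{f(K\mathbf{x}^k)f(K\mathbf{x}^{k+1})},
\end{equation*}
combined with the Lipschitz-type estimate for $|\varphi^k-\varphi^{k+1}|$ already established in the proof of Theorem \ref{residual} (using boundedness of the iterates plus local Lipschitz continuity of $g$ and $h$), with the Lipschitz continuity of $f$ on $K(\mathcal{S})$, and with the uniform lower bound $f(K\mathbf{x}^k)\ge m$ from Remark \ref{remk1}. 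This gives $|\theta_k-\theta_{k+1}| \le C_2(\|\mathbf{x}^k-\mathbf{x}^{k+1}\| + \|\mathbf{u}^k-\mathbf{u}^{k+1}\|)$.

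Finally, I would divide the bound on the numerator by $f(K\mathbf{x}^{k+1})^2 \ge m^2$ to obtain $\|\pmb{\xi}_{\mathbf{x}}^{k+1}\|$ controlled by a constant times $\|\mathbf{x}^k-\mathbf{x}^{k+1}\| + \|\mathbf{u}^k-\mathbf{u}^{k+1}\|$. For the $\mathbf{u}$-component, the identity $\mathbf{u}^{k+1}-\mathbf{x}^{k+1}=-\frac{1-\sigma}{\sigma}(\mathbf{u}^{k+1}-\mathbf{u}^k)$ derived from (\ref{usub}), together with $f(K\mathbf{x}^{k+1})\ge m$, yields $\|\pmb{\xi}_{\mathbf{u}}^{k+1}\|\le \frac{|1-\sigma|}{\sigma\delta m}\|\mathbf{u}^k-\mathbf{u}^{k+1}\|$. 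Combining everything, using $\mathrm{dist}(\mathbf{0},\partial\vartheta(\mathbf{x}^{k+1},\mathbf{u}^{k+1}))\le\mathrm{dist}(\mathbf{0},\hat\partial\vartheta(\mathbf{x}^{k+1},\mathbf{u}^{k+1}))\le \|\pmb{\xi}_{\mathbf{x}}^{k+1}\|+\|\pmb{\xi}_{\mathbf{u}}^{k+1}\|$, furnishes the desired constant $\varsigma$. The main technical obstacle, as in Theorem \ref{residual}, is the tracking of $|\theta_k - \theta_{k+1}|$; the rest is a careful but routine assembly of Lipschitz estimates made cleaner here by the smoothness of $f$.
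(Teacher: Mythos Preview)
Your proposal is correct and follows exactly the approach the paper intends: the paper's own proof simply reads ``The proof is similar to Theorem \ref{residual}, thus omitted here,'' and your adaptation---replacing $\varpi$ by $\vartheta$, using the displayed formula for $\hat\partial\vartheta$, collapsing $\mathbf{y}^{k+1}=\nabla f(K\mathbf{x}^k)$, and exploiting the Lipschitz continuity of $\nabla f$ on the compact set $K(\mathcal{S})$ to bound the cross term---is precisely the intended simplification. The only cosmetic difference is that your splitting of the second bracket (via $\theta_k-\theta_{k+1}$ and $\nabla f(K\mathbf{x}^k)-\nabla f(K\mathbf{x}^{k+1})$) is slightly more direct than the III/IV decomposition in Theorem~\ref{residual}, which is natural since here $f(K\mathbf{x}^{k+1})$ replaces $\nu^{k+1}$ in the denominator.
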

\begin{proof}The proof is similar to Theorem \ref{residual}, thus omitted here. \end{proof}
% \end{lemma}
Now, we are in the stage to present the global convergence results of FPSA.

\begin{theorem} \label{theo8}
Suppose Assumption \ref{ass1} and Assumption \ref{ass2}  hold, and one of the following conditions is fulfilled:
  \begin{itemize}
  \item[{\rm (i)}] $f^*$ is
calm at ${\h y}^k$ (for a sufficiently large $k$)  and $\varpi$ possesses the KL property at every point of ${\rm dom} \, \partial (\varpi)$.
 \item[{\rm (ii)}] $f$ is differentiable with Lipschitz continuous gradient over an open set containing $K({\cal S})$ and $\vartheta$ satisfies the KL property at every point of ${\rm dom} \, \partial (\vartheta)$.
     \end{itemize}
 Then,
$$
\sum_{k} \Big(\|{\h x}^k-{\h x}^{k+1}\| + \|{\h u}^k-{\h u}^{k+1}\|  \Big) <+\infty,
$$
And $\{\h x^k\}$ converges to a  (limiting) lifted stationary point of (\ref{PForm}).
\end{theorem}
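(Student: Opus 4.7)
The plan is to run the standard Kurdyka--\L ojasiewicz convergence argument in the spirit of Attouch--Bolte--Redont--Soubeyran and Bolte--Sabach--Teboulle, applied to the appropriate merit function (the function $\varpi$ in case (i), and $\vartheta$ in case (ii)). All three ingredients required for that template have already been prepared in the preceding sections, so the proof amounts to assembling them. Concretely, in case (i) the ingredients are: a sufficient-decrease inequality for $\varpi$ on consecutive iterates (Theorem \ref{subsequential}(i)); a relative-error bound $\text{dist}(0,\partial\varpi(\bm W^{k+1}))\le\zeta(\|{\h x}^k-{\h x}^{k+1}\|+\|{\h u}^k-{\h u}^{k+1}\|)$ (Theorem \ref{residual}); and the limiting/continuity information that $\varpi\equiv\overline\theta$ on the accumulation set $\Omega$ (Theorem \ref{subsequential}(ii)--(iii)). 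In case (ii) one uses exactly the analogous three facts for $\vartheta$ (sufficient decrease derived by dividing the inequality of Theorem \ref{DescentLemma} by $f(K{\h x}^{k+1})\ge m$, Theorem \ref{residualDiffF}, and Theorem \ref{subsequentialtoStat}(ii)).

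First I would verify that the sequence $\{\bm W^k\}$ is bounded (already noted in the paper: $\{{\h x}^k\},\{{\h u}^k\}$ are bounded by compactness of $\mathcal{S}$ and the relaxation update, and $\{{\h y}^k\}$ is bounded because $K\mathcal{S}\subseteq\inte(\dom f)$ via \cite[Theorem 23.4]{Rock70}), so $\Omega$ is nonempty and compact. Then I would invoke the \emph{uniformized} KL property on the compact set $\Omega$: since the merit function takes the constant value $\overline\theta$ on $\Omega$, there exist $\varepsilon,a>0$ and a concave $\phi\in\mathcal{C}^1((0,a))$ with $\phi(0)=0$ and $\phi'>0$, such that for every point in an $\varepsilon$-neighborhood of $\Omega$ with value in $(\overline\theta,\overline\theta+a)$ the inequality $\phi'(\cdot-\overline\theta)\text{dist}(0,\partial(\cdot))\ge 1$ holds. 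Pick $K_0$ large enough that $\bm W^k$ lies in this neighborhood, the merit values lie in $(\overline\theta,\overline\theta+a)$, and the relative-error and descent estimates are both active. Without loss of generality I assume the merit values are strictly above $\overline\theta$ (otherwise the sufficient decrease forces the sequence to terminate and the conclusion is immediate).

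Next comes the standard telescoping step. Using concavity of $\phi$,
\begin{equation*}
\phi(M^k-\overline\theta)-\phi(M^{k+1}-\overline\theta)\ge \phi'(M^k-\overline\theta)(M^k-M^{k+1}),
\end{equation*}
where $M^k$ denotes the merit value at step $k$. Combining the KL inequality with the relative-error bound gives $\phi'(M^k-\overline\theta)\ge 1/[\zeta(\|{\h x}^{k-1}-{\h x}^k\|+\|{\h u}^{k-1}-{\h u}^k\|)]$, while sufficient decrease gives $M^k-M^{k+1}\ge c(\|{\h x}^k-{\h x}^{k+1}\|^2+\|{\h u}^k-{\h u}^{k+1}\|^2)$. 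Set $\Delta_k:=\|{\h x}^k-{\h x}^{k+1}\|+\|{\h u}^k-{\h u}^{k+1}\|$; the resulting inequality reads $\Delta_k^2\lesssim \Delta_{k-1}\cdot[\phi(M^k-\overline\theta)-\phi(M^{k+1}-\overline\theta)]$. Applying the AM--GM (or Young's) inequality $2\sqrt{ab}\le \alpha a+\alpha^{-1}b$ with a suitably small $\alpha$ then yields
\begin{equation*}
\Delta_k\le \tfrac12\Delta_{k-1}+C\bigl[\phi(M^k-\overline\theta)-\phi(M^{k+1}-\overline\theta)\bigr].
\end{equation*}
Summing from $k=K_0$ to $N$ and using that $\phi\ge 0$ produces $\sum_{k\ge K_0}\Delta_k<+\infty$.

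From the finite-length property $\sum_k(\|{\h x}^k-{\h x}^{k+1}\|+\|{\h u}^k-{\h u}^{k+1}\|)<+\infty$, the sequences $\{{\h x}^k\}$ and $\{{\h u}^k\}$ are Cauchy and hence converge, and by Theorem \ref{subsequentialtoStat}(iii) the limit $\overline{\h x}$ is a lifted stationary point of (\ref{PForm}). I expect the \emph{main obstacle} to be purely notational: keeping case (i) and case (ii) uniform despite the merit function having different argument tuples ($(\h x,\h y,\h u)$ versus $(\h x,\h u)$). This is handled smoothly because neither the descent estimate nor the relative-error bound involves $\|\h y^k-\h y^{k+1}\|$; the $\h y$-component enters only through the KL inequality for $\varpi$, and since we only need summability of $\Delta_k$ to conclude convergence of $\{\h x^k\}$, the $\h y$-variable never has to be controlled directly.
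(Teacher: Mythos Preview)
Your proposal is correct and follows precisely the standard KL recipe that the paper invokes: the paper's own proof simply reads ``similar to \cite[Theorem 6.7]{boct2023full}, thus omitted here,'' and your three-ingredient assembly (sufficient decrease, relative-error bound, constant merit value on $\Omega$, followed by the uniformized KL inequality and the AM--GM telescoping) is exactly that argument. There is nothing substantively different to compare.
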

\begin{proof}
	The proof is similar to \cite[Theorem 6.7]{boct2023full}, thus omitted here.
\end{proof}
\begin{remark}
In the two settings above, we also require that either $f^*$ is
calm at ${\h y}^k$ (for a sufficiently large $k$)  or that \( f \) is differentiable with a Lipschitz continuous gradient over an open set containing \( K({\cal S}) \). We note that these conditions are satisfied in many practical applications. All examples (a)-(d) in the introduction satisfy the conditions stated in Theorem \ref{theo8}, i.e.,
(a) satisfies the assumption (i) while the others satisfy the assumption (ii).

\end{remark}

\section{Numerical Results}\label{num}
In this section, we further develop a nonmonotone line search version of the FPSA and compare its performance with several state-of-the-art algorithms. All experiments were conducted using MATLAB R2024a on a desktop running Windows 11, equipped with an Intel Core i9-13980HX CPU (2.20 GHz) and 16 GB of memory.

 To avoid the dependency
on the constant \( L_{\nabla h} \), we propose a nonmonotone line search FPSA, referred to as FPSA-{nl}. The specific algorithm is detailed in Algorithm \ref{FPSA-nlr}.
The stopping criterion throughout the numerical experiments is
\begin{equation}\label{STOPC}
	\begin{aligned}
		\frac{||\h x^{k+1} - \h x^{k}||}{\max\{||\h x^{k}||,\text{eps}\}} & < {\tt Tol}\;
		\text{or}\;k> {\tt MaxIt},
	\end{aligned}		
\end{equation}
where eps denotes the machine precision.

\begin{algorithm}[ht]
	\caption{FPSA-{nl}}	
	\begin{algorithmic}[1]
		\Require{ $\h x^{0}$, ${\h u}^0$,  $0<\sigma<2$, $\rho_{1}>0$, $0<q<1$, $\varepsilon>0$, $N$, $T$ and {\tt MaxIt}.}
		\For\; $k$ = 0:{\tt MaxIt}
%		\State{Let $\delta_k^0=\rho\frac{||\h x^{k} - \h x^{k-1}||_{2}}{||\nabla h(\h x^{k}) -\nabla h(\h x^{k-1})||_{2}}$.}
		\State{Choose the initial step size $\delta_{k,0} > 0$.}
		\State{ ${\h y}^{k+1}\in\partial f({K}{\h x}^k).$}
		\For  {$j = 1:N$}
			\State{$\delta_{k,j} = \delta_{k,0}*q^{j-1}$ }
			\State{$\h x^{k+1}\in\arg\min_{{\h x}\in{\cal S}} \left( g({\h x})+\frac{1}{2\delta_{k,j} }\left\|{\h x}-{\h u}^k+\delta_{k,j} \nabla h({\h x}^k)-\theta_k\delta_{k,j}  {K}^\top {\h y}^{k+1}\right\|^2\right)$}
			\State{$\theta_{k+1}=\vartheta({\h x}^{k+1},{\h u}^{k};\delta_{k,j})$}
			\If{$\theta_{k+1} < \max\limits_{[k-T]_{+}<s\leqslant k}\theta_{s} - \rho_{1}||\h x^{k+1}-\h x^{k}||^{2}$}
			\State{break}
			\EndIf
		\EndFor
		\State{${\h u}^{k+1}=(1-\sigma){\h u}^k+\sigma{\h x}^{k+1}.$}
		\If{Stopping Criterion}
		\State{break}
		\EndIf	
		\EndFor
		\State{{\bf Output} ${\h x}$.}
	\end{algorithmic}\label{FPSA-nlr}
\end{algorithm}

\subsection{$L_{1}/S_{\kappa}$ sparse signal recovery}

We solve the $L_{1}/S_{\kappa}$ regularized sparse recovery problem (\ref{L1oLK}),  and compare the FPSA-nl with two other methods: the Proximity Gradient Subgradient Algorithm with Backtracked Extrapolation (PGSA\_{BE}) from \cite{LSZ}, and the Extrapolated Proximal Subgradient (e-PSG) algorithm from \cite{BDL}. We use (\ref{STOPC}) as stopping criterion with ${\tt MaxIt} = 5000$ and ${\tt Tol} = 10^{-6}$.

We set $\lambda = 10^{-3}$, ${\h c} = -{\h e}, \, {\h d} = {\h e}$ in the model (\ref{L1oLK}). We generate the Oversampled DCT (O-DCT) matrix $A = [{\bf a}_1,{\bf a}_2,\ldots,{\bf a}_n] \in {\mathbb R}^{m \times n}$, where each column ${\bf a}_j = \frac{1}{\sqrt{m}} \cos \left( \frac{2\pi {\bf w}j}{F} \right)$ for $j=1,\ldots,n$. Here, ${\bf w} \in {\mathbb R}^m$ is a uniformly distributed random vector over $[0,1]$, and $F \in {\mathbb R}_+$ controls the coherence. Next, we generate a $\kappa$-sparse ground truth signal $\h x^* \in {\mathbb R}^n$:
(1) The support set $\Lambda$ is generated randomly with a minimum distance $2F$;
(2) We generate a nonzero vector $\h y \in \mathbb{R}^{\kappa}$ subject to a standard Gaussian distribution, and $\h x^*|_{\Lambda} = \text{sign}{(\h y)}/2$;
(3) $\h b = A \h x^{*}$.

The parameters in FPSA-nl are set to $\sigma = 1.35$, $\rho_{1} = 10^{-3}$, and
{
\begin{eqnarray}\label{del0}
\delta_{k,0} =\left\{ \begin{array}{cl} \frac{\|\h x_0\|}{\max\{\| \nabla h(\h x_0)\|,\text{eps}\}}& k=0,\\[0.1cm]
\varsigma \cdot \frac{\|\h x^{k} - \h x^{k-1}\|}{\max\{\|\nabla h(\h x^{k}) - \nabla h(\h x^{k-1})\|,\text{eps}\}} &k\ge 1,\end{array}\right.\end{eqnarray}
}
with $\varsigma=0.8$.
 %$\delta_{0,0} = \|\h x_0\| / \max\{ \|h(\h x_0)\|,\text{eps}\}$, and
%\[
%\delta_{k,0} = 0.8 \cdot ||\h x^{k} - \h x^{k-1}|| / \max\{ ||\nabla h(\h x^{k}) - \nabla h(\h x^{k-1})||,\text{eps}\}
%\]
Set $q = 0.9$, $T = 20$.

For PGSA\_{BE}, we set $f(\h x) = \lambda ||\h x||_1 + \iota_{\mathcal{S}}(\h x)$, $h(\h x) = \frac{1}{2} ||A \h x - \h b||^2$, $g(\h x) = ||\h x||_{(\kappa)}$, and we set $\epsilon = 10^{-4}$, $\alpha = 1/||A||_2^2$, $l = 0$. Then, we use a recursive sequence to update
\[
\theta_{k+1} = \frac{1 + \sqrt{1 + 4\theta_k^2}}{2}, \quad \text{where} \quad \theta_{-1} = \theta_0 = 1.
\]
We set $\beta_k = (\theta_{k-1} - 1)/\theta_k$ and reset $\theta_{k-1} = \theta_k = 1$ for every 100 iterations ($\beta_{100} \approx 0.97$). Then $\{\beta_k\} \subseteq [0, \bar{\beta}]$ for some $0 < \bar{\beta} < 1$.

For e-PSG, we set $f^{n}(\h x) = \lambda ||\h x||_1$, $f^{s}(\h x) = \frac{1}{2} ||A \h x - \h b||^2$, $g(\h x) = ||\h x||_{(\kappa)}$, and we set $\beta = 0$, $\delta = 10^{-3}$, $\ell = ||A||_2^2$, $\tau_{k} \equiv 1/\delta$. Note that (\ref{L1oLK}) doesn't satisfy the BC condition; therefore, $\bar{\mu} = \bar{\kappa} = 0$.

All these algorithms are initialized from $\h x_0 = \h x^* + 0.2 * \h z$, where $\h z$ is subjected to the uniform distribution on $[-1, 1]^{n}$.

We measure performance in terms of CPU time, the lifted stationary residual (StatRes), defined as
\begin{equation*}
    \text{StatRes}(\h x) = \text{dist}({\h 0}, (\nabla h({\h x}) + \partial g({\h x}) + \partial \iota_{\cal S}({\h x}))f({K}{\h x}) - (h({\h x}) + g({\h x})) {K}^{\top}\partial f(K{\h x})),
\end{equation*}
and the relative error (Err):
\[
    \text{Err}(\h x) = \frac{||\h x - \h x^{*}||}{\max\{||\h x^{*}||,\text{eps}\}},
\]
where $\h x$ denotes the last iterate.

\begin{figure}[htb]
	\vspace{0cm}\centering{\vspace{0cm}
		\begin{tabular}{ccc}
			\includegraphics[scale = .35]{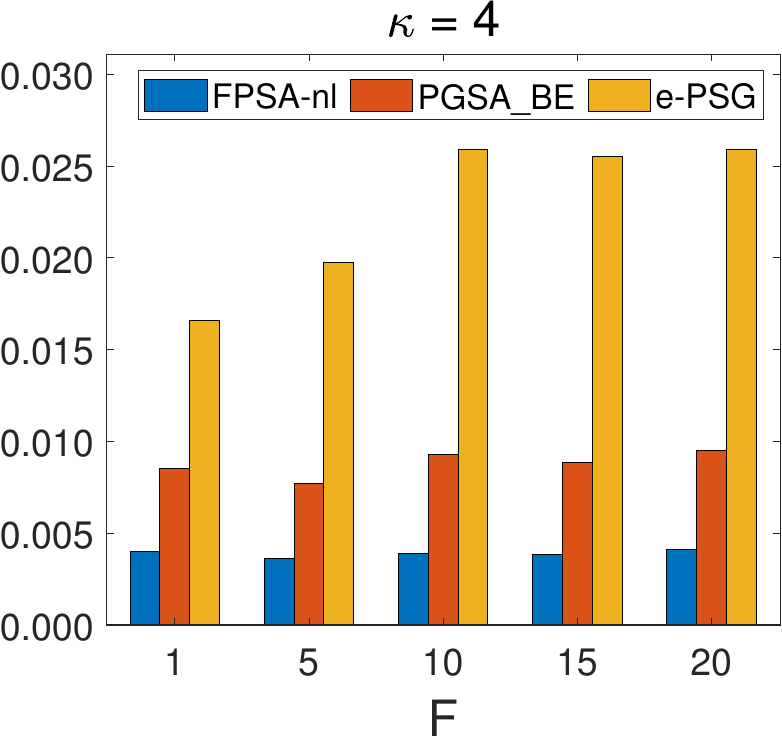}
			& \includegraphics[scale = .35]{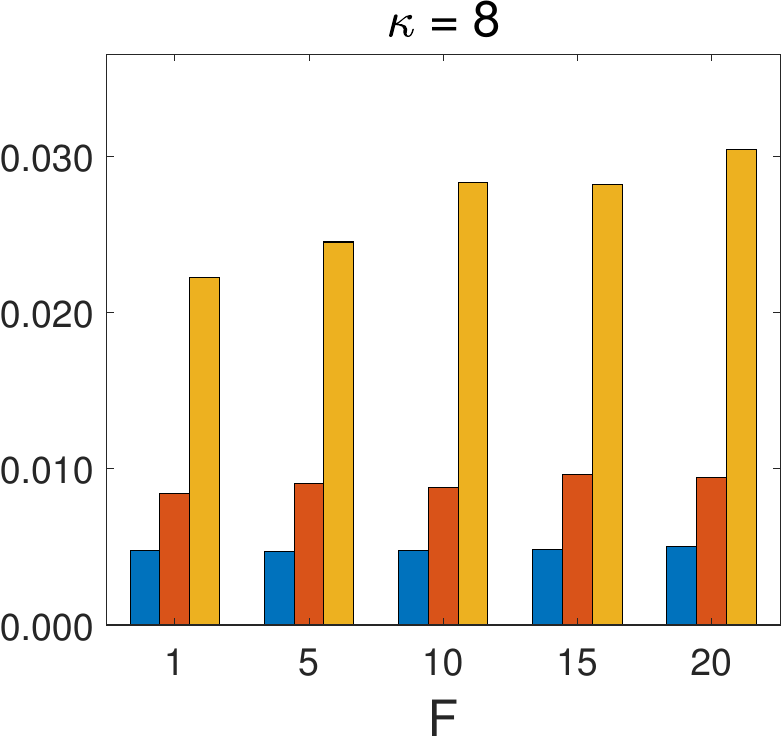}
			& \includegraphics[scale = .35]{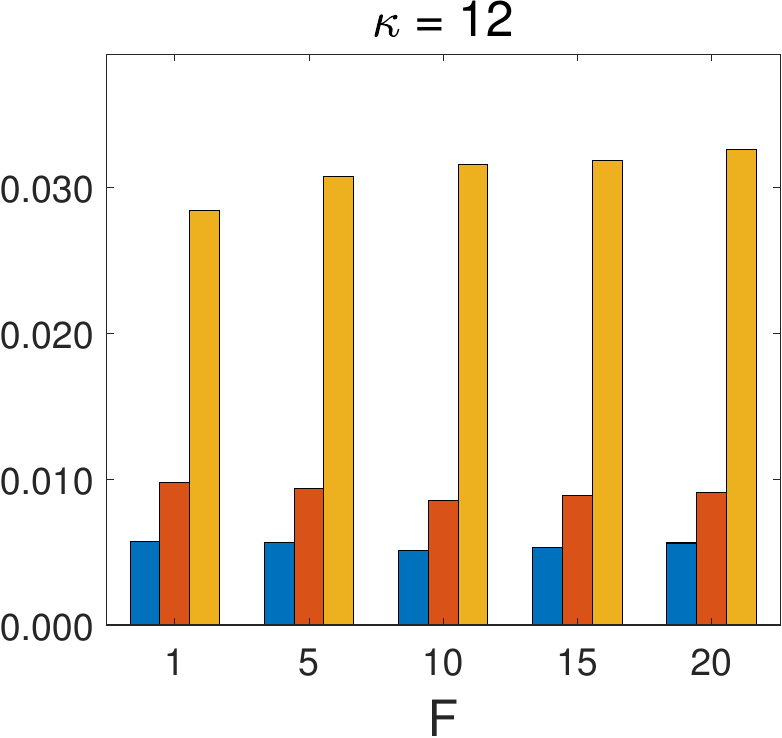}
		\end{tabular}
	} \caption{The averaged results of CPU time from three algorithms with $\kappa\in\{4, 8, 12\}$ and $F\in\{1, 5, 10, 15, 20\}$.}
	\label{L1oLK_CPU}
\end{figure}

\begin{figure}[htb]
	\vspace{0cm}\centering{\vspace{0cm}
		\begin{tabular}{ccc}
			\includegraphics[scale = .35]{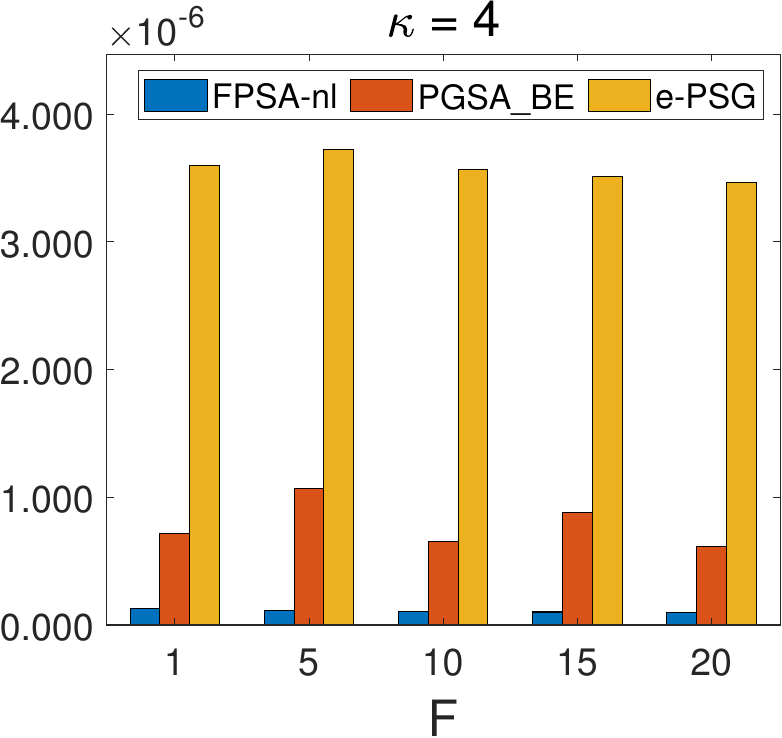}
			& \includegraphics[scale = .35]{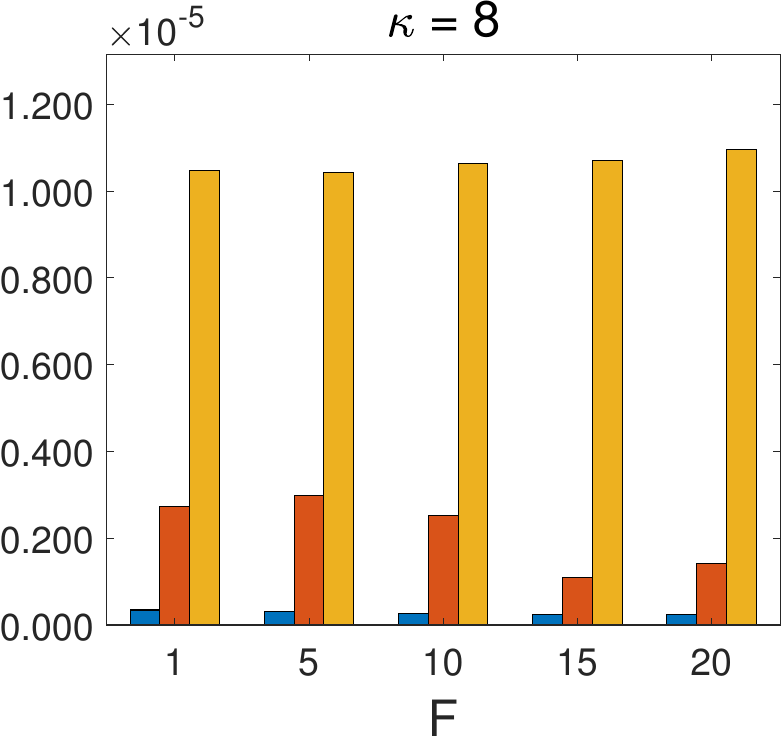}
			& \includegraphics[scale = .35]{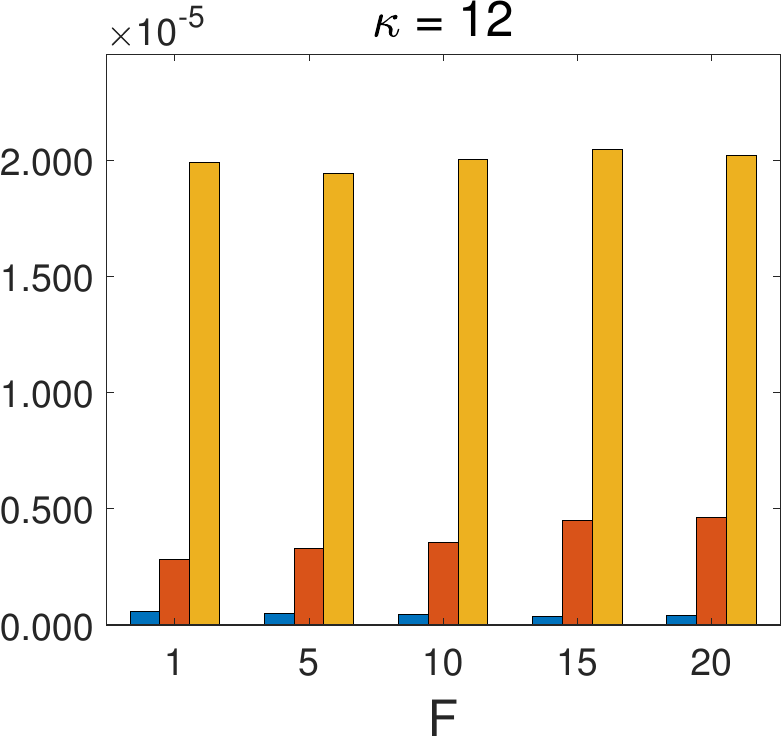}
		\end{tabular}
	} \caption{The averaged results of StatRes from  three algorithms with $\kappa\in\{4, 8, 12\}$ and $F\in\{1, 5, 10, 15, 20\}$.}
	\label{L1oLK_kkt}
\end{figure}

\begin{figure}[htb]
	\vspace{0cm}\centering{\vspace{0cm}
		\begin{tabular}{ccc}
			\includegraphics[scale = .35]{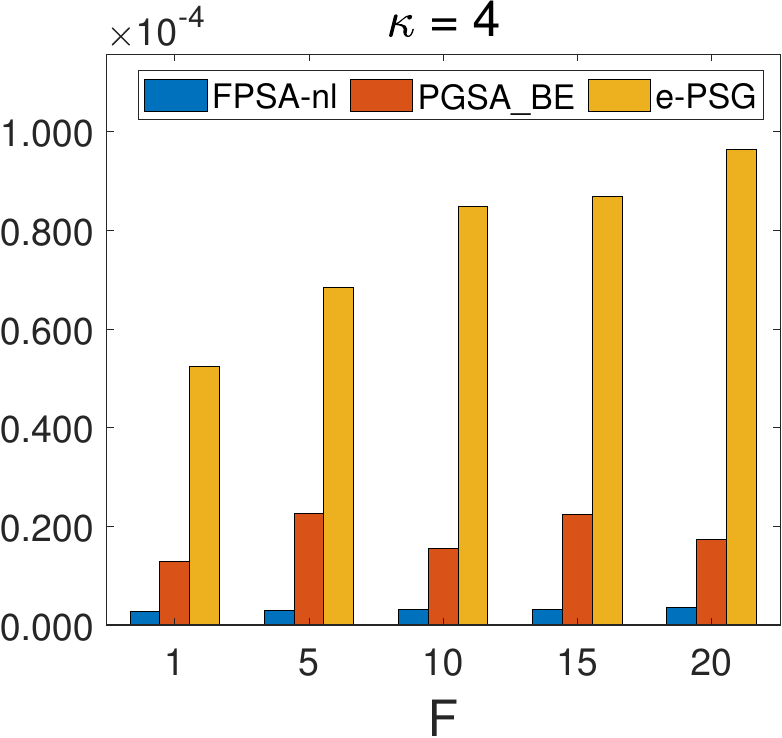}
			& \includegraphics[scale = .35]{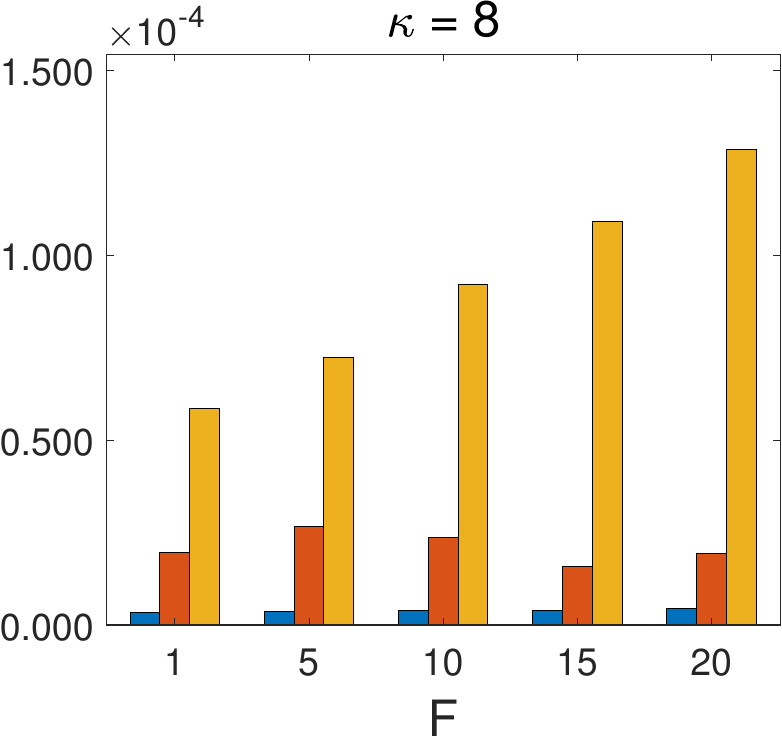}
			& \includegraphics[scale = .35]{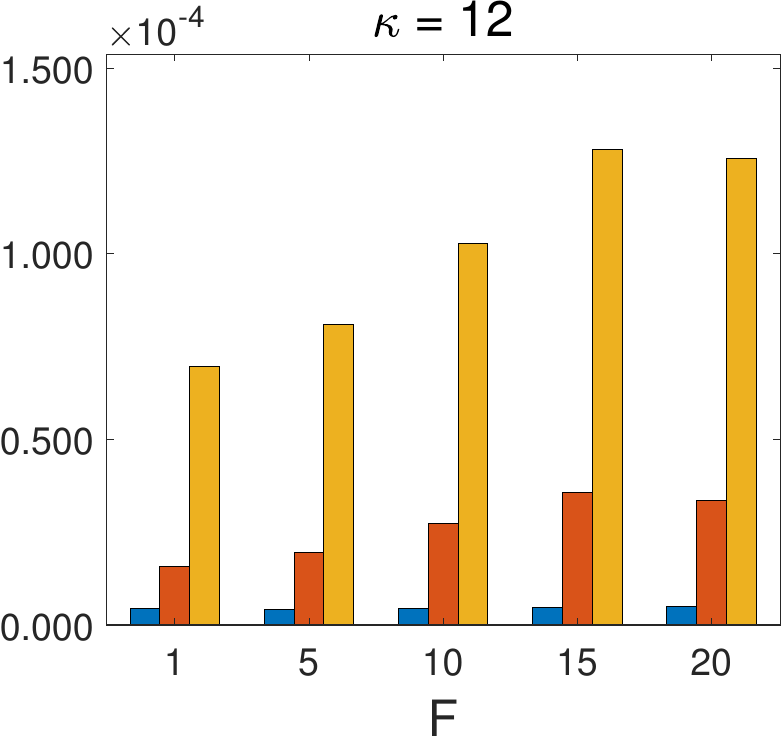}
		\end{tabular}
	} \caption{The averaged results of Err from three algorithms with $\kappa\in\{4, 8, 12\}$ and $F\in\{1, 5, 10, 15, 20\}$.}
	\label{L1oLK_err}
\end{figure}

  We test on the scenarios of $(m,\ n) = (64, \ 1024)$ with  varying sparsity $\kappa \in \{4, 8, 12\}$ and $F \in \{1, 5, 10, 15, 20\}$.
  For each scenario, we take the average of  20 repeated experiments.
In Figures \ref{L1oLK_CPU}, \ref{L1oLK_kkt}, and \ref{L1oLK_err}, we present bar charts showing the average results in terms of
 CPU time, StatRes, and Err from FPSA-nl, PGSA\_BE, and e-PSG under different sparsity levels and different values of $F$.
 Notably, FPSA-nl requires less CPU time while achieving much more accurate solutions, as evidenced by its significantly lower StatRes and Err values compared to PGSA\_BE and e-PSG.

\subsection{Limited-Angle CT Reconstruction}

We consider the limited-angle scan model for CT reconstruction  (\ref{CT}) with \({\h c} = {\bf 0}\) and \({\h d} = {\bf e}\) in \({\cal S}\). We compare the numerical performance of three algorithms: the FPSA-nl, the e-PSA \cite{BDL}, and the PGSA\_{BE} \cite{LSZ}. We use the same stopping criterion (\ref{STOPC}) with \({\tt MaxIt} = 5000\) and \({\tt Tol} = 10^{-6}\).

We use two phantoms: the Shepp-Logan (SL) and the FORBILD (FB) \cite{70}. We followed the routines outlined in \cite{WTNL} to generate the limited-angle CT data. The SL and FB phantoms were discretized at a resolution of \(128 \times 128\). The parallel beam CT scanning was simulated using the AIR and IR toolbox \cite{gazzola2019ir,hansen2012air}. The linear operator \(A \in \mathbb{R}^{5611 \times 16384}\) was formulated as the discrete Radon transform, corresponding to the phantoms' resolution. The maximum angle \(\theta_{\text{Max}}\) was set to either \(90^{\circ}\) or \(150^{\circ}\). Sampling was performed at intervals of \(\theta_{\text{Max}}/30\) over the range from \(0^{\circ}\) to \(\theta_{\text{Max}}\).

The additive noise was subjected to a standard Gaussian distribution, and the noise levels  were 0\% (noiseless), 0.1\%, and 0.5\%. The parameter $\lambda$ in (\ref{CT}) was set by hand-tuning and is provided in Table \ref{SL_FB_para}.

	\begin{table}[ht]
	\caption{The choice of $\lambda$ for SL and FB Phantoms.}
	\label{SL_FB_para}
\begin{center}
	{\vskip -1mm \centering
	\resizebox{0.75\linewidth}{0.55in}{
		\begin{tabular}{c|c|c|c|c|c|c|c}
			\hline
			\multirow{7}{*}{SL}      & Projection  					& Noise level & $\lambda$ &
			\multirow{7}{*}{FB} 		   & Projection  					& Noise level & $\lambda$ \\ \cline{2-4} \cline{6-8}
			& \multirow{3}{*}{90$^{\circ}$}  & noiseless   & 0.25      &
			& \multirow{3}{*}{90$^{\circ}$}  & noiseless   & 0.25      \\ \cline{3-4} \cline{7-8}
			&                                & 0.1\%       & 0.25      &                          &                                & 0.1\%       & 0.25      \\ \cline{3-4} \cline{7-8}
			&                                & 0.5\%       & 1.00      &                          &                                & 0.5\%       & 1.00      \\ \cline{2-4} \cline{6-8}
			& \multirow{3}{*}{150$^{\circ}$} & noiseless   & 0.25      &
			& \multirow{3}{*}{150$^{\circ}$} & noiseless   & 0.25      \\ \cline{3-4} \cline{7-8}
			&                                & 0.1\%       & 0.25      &                          &                                & 0.1\%       & 0.80      \\ \cline{3-4} \cline{7-8}
			&                                & 0.5\%       & 1.00      &                          &                                & 0.5\%       & 1.00      \\ \hline
		\end{tabular}
	}}\end{center}
\end{table}

In implementing FPSA-nl,   the ${\h x}$-subproblem is
\begin{equation*}
	\h x^{k+1} = {\arg\min}_{ \h x \in {\cal S}}\ \left[\lambda||\nabla\h x||_{1} + \frac{1}{2\delta_{k,j}}||\h x - \h z^{k}||^2 \right] \label{ADMM_sub}
\end{equation*}
where $\h z^{k} = \h u^{k} - \delta_{k,j}\nabla h(\h x^{k}) + \theta_{k}\delta_{k,j}\nabla^{\top}\h y^{k+1}$. We reformulated it as:

\begin{equation*}
	\begin{aligned}
		\min_{\h w, \h x,\h h}\ & \iota_{{\cal S}}(\h h)+\lambda||\h w||_{1} + \frac{1}{2\delta_{k,j}}||\h x - \h z^{k}||^2  \\
		s.t.\ &  \nabla \h x = \h w ,\ \h h = \h x.
	\end{aligned}
\end{equation*}

\noindent By using the alternating direction method of multiplier (ADMM), we have the following iterative scheme:

\begin{subequations}
	\begin{numcases}{\hbox{\quad}}
		{\h w}^{t+1} = \text{shrink}(\nabla \h x^{t} + \h v^{t},\lambda/\alpha),\notag\\[0cm]
		\h x^{t+1} =  \left( \alpha\nabla^{\top}\nabla + (\frac{1}{\delta_{k,j}} + \beta)I  \right)^{-1}\left( \frac{\h z^{k}}{\delta_{k,j}} - \alpha\nabla^{\top}\h v^{t} + \alpha\nabla^{\top}\h w^{t+1} - \beta \bm{\mu}^{t} + \beta \h h^{t}    \right),\notag\\
		{\h h}^{t+1}=\min\{\max\{\h x^{t+1} + \bm{\mu}^{t},\h c\},\h d\},\notag\\[0.1cm]
		\h v_{t+1}=\h v^{t} + \nabla \h x^{t+1} - {\h w}^{t+1}, \notag\\
		\bm{\mu}^{t+1} = \bm{\mu}^{t} + \h x^{t+1} - \h h^{t+1},\notag
	\end{numcases}
\end{subequations}
where $j$ denotes the inner iteration number.

Parameters setting in FPSA-nl: In the outer iteration, we set \(\sigma = 1.0\),
and set $\delta_{k,0}$  as (\ref{del0})
with $\varsigma=0.8$.
 We took \(q = 0.95\), \(T = 5\), \(\rho_{1} = 10^{-3}\), and \(N = 250\).
In the inner iteration, we set \(\alpha = 5\) and \(\beta = 0.0005\), with a maximum  iteration number 3 for the \(\h x\)-subproblem.

For PGSA\_{BE}, we took $f(\h x) = \lambda||\nabla \h x||_1 + \iota_{\mathcal{S}}(\h x)$, $h(\h x) = \frac{1}{2}||A\h x - \h b||^2$, $g(\h x) = ||\nabla \h x||_{2}$ and we set $\epsilon = 10^{-4}$, $\ell = 0$, $\beta_k \equiv 0.99$ for both SL and FB. We took $\alpha = 4\times 10^{-4}$ for SL, and  $\alpha = 3.8\times 10^{-4}$ for FB. At each iteration, PGSA\_BE also required solving a denoise subproblem with
\(\h z^k = \h u^{k+1} - \alpha\nabla h(\h u^{k+1}) + \alpha c_k \h y^{k+1}\)
or
\(\h z^k = \h x^{k} - \alpha\nabla h(\h x^{k}) + \alpha c_k \h y^{k+1}\).
We still use ADMM to solve it with \(\rho_1\) and \(\rho_2\) as penalty parameters to  \(\h w = \nabla \h x\) and \(\h h = \h x\), respectively. We set \(\rho_1 = 5\times 10^{-5}\), \(\rho_2 = 10^{-2}\), and the termination criterion of the inner loop was that the relative error between two consecutive iterations was less than \(10^{-6}\) or the inner iteration number reached 300.

For e-PSG, we took $f^{n}(\h x) = \lambda||\nabla \h x||_1$, $f^{s}(\h x) = \frac{1}{2}||A\h x - \h b||^2$, $g(\h x) = ||\nabla \h x||_{2}$, and we set $\beta = 0$, $\ell = \|A^\top A\|$. We set \(\delta = 1500\) for SL, and \(\delta = 2400\) for FB. Then we took \(\tau_k \equiv 1/\delta\) for all \(k \ge 0\). Note that (\ref{CT}) did not satisfy the BC condition. Thus, we set \(\bar{\mu} = \bar{\kappa} = 0\). At each step, e-PSG also required solving a denoise subproblem with
\(\h z^k = \h x^k + (\theta_k \h g^k - \nabla f^{s}(\h x^k))/(1/\tau_k + l)\).
We adopted ADMM to solve it. Let \(\rho_1\) and \(\rho_2\) be the penalty parameters for \(\h w = \nabla \h x\) and \(\h h = \h x\).
We set \(\rho_1 = 0.05\), \(\rho_2 = 0.01\), and the termination criterion of the inner loop was that the relative error between two consecutive iterations was less than \(10^{-6}\) or the number of iterations reached 300.
For all the solvers, the initial point was chosen as the zero matrix.

We measured performance in terms of the root mean squared error (RMSE), the overall structural similarity index (SSIM), and the CPU time (CPU). The RMSE was defined as

\[
\text{RMSE}(\h x, \h x^{*}) := \frac{\|\h x - \h x^{*}\|}{n \times n},
\]
where \(\h x^{*} \in \mathbb{R}^{n \times n}\) is the ground truth. The SSIM was defined as:
$$
\text{SSIM}({\h u},{{\h v}}) = \frac{1}{M} \sum_{i=1}^{M} {\rm ssim}(u^{(i)}, v^{(i)}),
$$
where \(u^{(i)}\) and \(v^{(i)}\) represented the \(i\)-th \(3 \times 3\) windows for \({\h{u}}\) and \({\h{v}}\), respectively, and \(M\) was the total number of such windows. The local similarity index was computed using the formula:

$$
{\rm ssim}(u^{(i)}, v^{(i)}) = \frac{(2\mu_u^{(i)}\mu_v^{(i)} + c_1)(2\sigma_{uv}^{(i)} + c_2)}{((\mu^{(i)}_u)^2 + (\mu^{(i)}_v)^2 + c_1)((\sigma^{(i)}_u)^2 + (\sigma^{(i)}_v)^2 + c_2)},
$$
where \(\mu^{(i)}_{u}\) and \(\mu^{(i)}_{v}\) represented the averages, \((\sigma^{(i)}_{u})^2\) and \((\sigma^{(i)}_{v})^2\) denoted the variances of \(u^{(i)}\) and \(v^{(i)}\), and \(\sigma^{(i)}_{uv} = \frac{1}{63} \sum (u^{(i)} - \mu_u^{(i)})(v^{(i)} - \mu_v^{(i)})\). The constants \(c_1\) and \(c_2\) were set to \(0.05\) to stabilize the division with a weak denominator. We set \(c_{1} = c_{2} = 0.05\).

	\begin{figure}[htb]
		\vspace{0cm}\centering{\vspace{0cm}
			\begin{tabular}{cccc}
				\includegraphics[scale = .5]{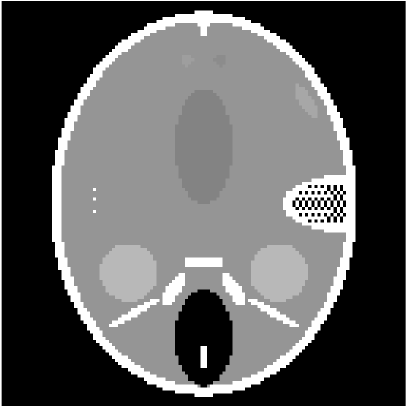}
				& \includegraphics[scale = .5]{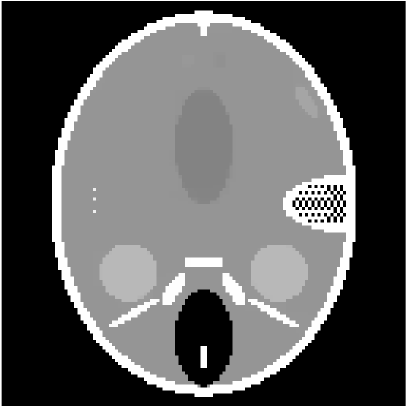}
				& \includegraphics[scale = .5]{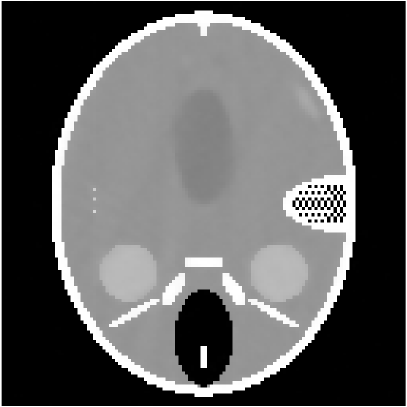}
				& \includegraphics[scale = .5]{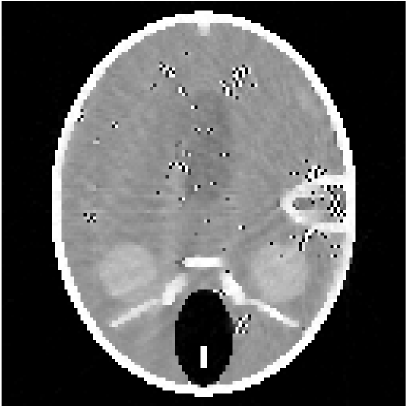}
			\end{tabular}
		}
		\caption{The original images of FB, as well as the reconstructed results from FPSA-nl, PGSA\_BE, e-PSG (from left to right)  under the projection angle of 150$^{\circ}$ and noise level at 0.1\%. }
		\label{SL_FB}
	\end{figure}

\begin{table}[ht]
	\caption{ CT reconstruction results of the Shepp-Logan  and FORBILD.}
			\label{SL_tabular}
			\renewcommand{\arraystretch}{1.15}
	\begin{tabular}{cccccccc}
		\toprule
\multicolumn{8}{c}{Shepp-Logan}\\
\toprule
		\multicolumn{2}{c}{noise level}      & \multicolumn{2}{c}{noiseless} & \multicolumn{2}{c}{0.1\%}    & \multicolumn{2}{c}{0.5\%}    \\
		\cmidrule[\heavyrulewidth](lr){1-2} \cmidrule[\heavyrulewidth](lr){3-4} \cmidrule[\heavyrulewidth](lr){5-6} \cmidrule[\heavyrulewidth](lr){7-8}
		\multicolumn{2}{c}{Projection Angle} & $90^{\circ}$  & $150^{\circ}$ & $90^{\circ}$ & $150^{\circ}$ & $90^{\circ}$ & $150^{\circ}$ \\ \toprule[0.8pt]
		\multirow{3}{*}{SSIM}
%		& SART       & 8.77e-01      & 9.16e-01      & 8.76e-01     & 9.15e-01      & 8.51e-01     & 8.89e-01      \\
		& PGSA\_BE   & 9.99e-01      & 1.00e+00      & 9.99e-01     & 1.00e+00      & 9.98e-01     & 9.99e-01      \\
		& e-PSG      & 9.08e-01      & 9.99e-01      & 9.06e-01     & 9.98e-01      & 8.57e-01     & 9.97e-01      \\
		& FPSA-nl    & 1.00e+00      & 1.00e+00      & 1.00e+00     & 1.00e+00      & 9.99e-01     & 1.00e+00      \\ \hline
		\multirow{3}{*}{RMSE}
%		& SART       & 1.00e-03      & 7.97e-04      & 1.00e-03     & 8.02e-04      & 1.10e-03     & 9.19e-04      \\
		& PGSA\_BE   & 5.77e-05      & 2.00e-05      & 5.81e-05     & 2.12e-05      & 9.59e-05     & 5.87e-05      \\
		& e-PSG      & 8.29e-04      & 6.27e-05      & 8.39e-04     & 6.37e-05      & 1.04e-03     & 1.06e-04      \\
		& FPSA-nl    & 9.94e-06      & 5.29e-06      & 2.90e-05     & 1.73e-05      & 6.94e-05     & 3.18e-05      \\ \hline
		%\multirow{3}{*}{PSNR}
%%		& SART       & 3.57e+01      & 3.36e+01      & 3.57e+01     & 3.36e+01      & 3.57e+01     & 3.36e+01      \\
%		& PGSA\_BE   & 3.10e+01      & 3.10e+01      & 3.10e+01     & 3.10e+01      & 3.11e+01     & 3.10e+01      \\
%		& e-PSG      & 3.31e+01      & 3.10e+01      & 3.31e+01     & 3.10e+01      & 3.33e+01     & 3.11e+01      \\
%		& FPSA-nl    & 3.09e+01      & 3.09e+01      & 3.09e+01     & 3.09e+01      & 3.10e+01     & 3.10e+01      \\ \hline
		\multirow{3}{*}{CPU}
%		& SART       & 4.22e+00      & 3.99e+00      & 4.16e+00     & 4.22e+00      & 4.14e+00     & 4.19e+00      \\
		& PGSA\_BE   & 6.37e+02      & 5.32e+02      & 6.54e+02     & 4.83e+02      & 8.39e+02     & 7.14e+02      \\
		& e-PSG      & 1.84e+03      & 1.87e+03      & 1.87e+03     & 1.88e+03      & 2.38e+03     & 2.39e+03      \\
		& FPSA-nl    & 1.34e+01      & 1.24e+01      & 1.99e+01     & 6.91e+00      & 1.23e+01     & 7.31e+00      \\
	\toprule
\multicolumn{8}{c}{FORBILD}\\
		\toprule
		\multirow{3}{*}{SSIM}
%		& SART     & 8.44e-01      & 9.09e-01      & 8.39e-01     & 7.58e-01      & 7.58e-01     & 8.03e-01      \\
		& PGSA\_BE & 9.99e-01      & 1.00e-01      & 9.99e-01     & 9.87e-01      & 9.85e-01     & 9.97e-01      \\
		& e-PSG    & 7.83e-01      & 8.26e-01      & 7.78e-01     & 9.39e-01      & 7.90e-01     & 8.53e-01      \\
		& FPSA-nl  & 1.00e+00      & 1.00e+00      & 1.00e+00     & 9.90e-01      & 9.90e-01     & 9.98e-01      \\ \hline
		\multirow{3}{*}{RMSE}
%		& SART     & 1.20e-03      & 8.50e-04      & 1.21e-03     & 8.76e-04      & 1.52e-03     & 1.36e-03      \\
		& PGSA\_BE & 5.62e-05      & 1.72e-05      & 6.03e-05     & 5.70e-05      & 2.89e-04     & 1.09e-04      \\
		& e-PSG    & 1.36e-03      & 1.07e-03      & 1.38e-03     & 1.75e-05      & 1.37e-03     & 1.01e-03      \\
		& FPSA-nl  & 2.54e-05      & 4.98e-06      & 5.34e-05     & 1.75e-05      & 2.48e-04     & 9.84e-05      \\ \hline
		%\multirow{3}{*}{PSNR}
%%		& SART     & 2.34e+01      & 2.22e+01      & 2.34e+01     & 2.22e+01      & 2.34e+01     & 2.22e+01      \\
%		& PGSA\_BE & 2.12e+01      & 2.12e+01      & 2.12e+01     & 2.22e+01      & 2.14e+01     & 2.12e+01      \\
%		& e-PSG    & 2.23e+01      & 2.12e+01      & 2.23e+01     & 2.15e+01      & 2.24e+01     & 2.16e+01      \\
%		& FPSA-nl  & 2.12e+01      & 2.12e+01      & 2.12e+01     & 2.12e+01      & 2.13e+01     & 2.12e+01      \\ \hline
		\multirow{3}{*}{CPU}
%		& SART     & 5.52e+00      & 5.40e+00      & 4.91e+00     & 4.91e+00      & 6.74e+00     & 4.27e+00      \\
		& PGSA\_BE & 1.17e+03      & 1.03e+03      & 1.69e+03     & 5.16e+01      & 8.24e+02     & 5.24e+02      \\
		& e-PSG    & 1.91e+03      & 2.24e+03      & 2.24e+03     & 2.31e+03      & 2.04e+03     & 1.92e+03      \\
		& FPSA-nl  & 3.17e+01      & 1.34e+01      & 4.61e+01     & 1.42e+01      & 4.37e+01     & 1.93e+01      \\ \bottomrule
	\end{tabular}
\end{table}

 In Figure \ref{SL_FB}, we display the original images of  FB, as well as the reconstructed results via PGSA\_BE, e-PSG, and FPSA-nl under the projection angle of 150$^{\circ}$ and noise level at 0.1\%. The contrast ranges of  FB were adjusted to [0,0.65]. As can be seen, FPSA-nl almost perfectly reconstructs the original image, while the image obtained by PGSA\_BE is slightly blurry. In contrast, the image obtained by e-PSG lacks many details and contains some noisy pixels.

Table \ref{SL_tabular}  presents the SSIM and RMSE values, along with the CPU time  using SL and FB via FPSA-nl, PGSA\_BE, and e-PSG, across various angles and noise levels. Among FPSA-nl, PGSA\_BE, and e-PSG, the first two methods significantly outperform e-PSG. This is because the BC condition of e-PSG doesn't satisfy. It leads to interpolation step lengths being 0, which worsens the performance of e-PSG.

The SSIM values  from FPSA-nl and PGSA\_BE are similar, with FPSA-nl slightly outperforming PGSA\_BE. Both methods outperform e-PSG. Moreover, FPSA-nl achieves the lowest RMSE while requiring the least computational time, highlighting its efficiency among these three.

\subsection{Optimal Portfolio Selection}

We now consider the optimal portfolio selection model (\ref{OPS}). The matrix \(V \in \mathbb{R}^{n \times n}\) has the form \(V = {\Sigma} + L L^{\top}\), where \({\Sigma}\) is an \(n \times n\) diagonal matrix with positive diagonal entries and \(L\) is an \(n \times m\) matrix with \(m < n\).
The data (\(V\), \({\mu}\), \(\h d\)) were generated as follows: the diagonal entries of \({\Sigma}\) were set equal to 2, and each entry of \(L\) was uniformly drawn from \([-1, 1]\). The components of the vector \({\mu}\) were randomly generated from a uniform distribution in \((0, 1)\), which ensured that \({\mu^{\top}}\h x > 0\) for all \(\h x \in {\cal S}\). All entries of the vector \(\h d\) were set to \(1.75/n\).

We utilize FPSA-nl, e-PSG, and PGSA\_BE to solve (\ref{OPS}) and measure their performance in terms of the objective function value (ObjVal), infeasibility (Infeas:= \(|\h e^{\top}\h x - 1| + ||\max\{-\h x, \h 0\}||_1 + ||\max\{\h x - \h d, \h 0\}||_1\)), CPU time (CPU), and StatRes.
We use the stopping criterion (\ref{STOPC}) with \({\tt MaxIt} = 3000\) and \({\tt Tol} = 10^{-8}\).

The iteration for \({\h x}^{k+1}\) in FPSA-nl is

\begin{equation}
	\h x^{k+1} = \arg\min_{\h x\in \cal{S}} \frac{1}{2\delta_{k}}||\h x - \h z^{k}||^{2}  \label{OPSsub}
\end{equation}
where $\h z^{k} = \h u^{k} -\delta_{k}\nabla h(\h x^{k}) + \theta_{k}\delta_{k}\nabla^{\top}\h y^{k+1}$.
The solution of (\ref{OPSsub}) is:
\begin{equation*}
	\h x^{k+1} = \min\{\max\{ \h z^{k} - \eta^{*}_{k}\h e, \h 0\},\h d\}
\end{equation*}
$\eta^{*}_{k}$ is the root of the following one-dimension equation:
\begin{equation*}
	l(\eta) = \h e^{\top}\min\{\max\{ \h z^{k} - \eta \h e, \h 0 \},\h d\} - 1 = 0.
\end{equation*}
%hence we can employ the binary search method for solving it by the stopping criterion $|l(\eta)| < 10^{-8}$.

For FPSA-nl, we set \(\sigma = 1.05,\ \rho_{1}  = 10^{-3}\). Set $\delta_{k,0}$  as (\ref{del0})
with $\varsigma=0.82$.
Let \(q = 0.95,\ T = 20, \ N = 250\).

For PGSA\_BE, we took \(f(\h x) = \iota_{\mathcal{S}}(\h x),\ h(\h x) = \h x^{\top}V\h x,\ g(\h x) = \mu^{\top}\h x\), and we set \(\epsilon = 10^{-4},\ \alpha = 1/(2||V||), \ l = 0\). Then we calculated a recursive sequence \(\theta_{k+1} = (1 + \sqrt{1 + 4\theta^2_k})/2\), where \(\theta_{-1} = \theta_0 = 1\). We set \(\beta_k = (\theta_{k-1} - 1)/\theta_k\) and reset \(\theta_{k - 1} = \theta_k = 1\) every 100 iterations \((\beta_{100} \approx 0.97)\). Then \(\{\beta_k\} \subseteq [0, \bar{\beta}]\) for some \(0 < \bar{\beta} < 1\).

For e-PSG, we took \(f^{n}(\h x) = 0,\ f^{s}(\h x) = \h x^{\top}V\h x,\ g(\h x) = \mu^{\top}\h x\), and we set \(\beta = 0,\ \delta = 10^{-3},\ l = 2||V||_2,\ \tau_{k} \equiv 1/\delta\). Note that in this case, the BC condition was satisfied with \(m = \min_{1 \leqslant i \leqslant n} \{\mu_{i}\}\) and \(M = \max_{1 \leqslant i \leqslant n} \{\mu_{i}\}||\h d||_1\). We set \(\bar{\mu} = \frac{0.99\delta}{2}\sqrt{\frac{m}{M}}\) and \(\bar{\kappa} = 0\).

All these comparing algorithms were started from \(\h e/n\).

\begin{table}[htbp]
	\caption{The averaged results for solving optimal portfolio selection model. }
	\renewcommand{\arraystretch}{1}
	\label{OPS_200}
	\centering
	%\resizebox{1\textwidth}{0.95in}{
		\begin{tabular}{ccccccc}
			\toprule
			\multicolumn{2}{c}{n} &\multicolumn{5}{c}{n = 200}                                                               \\
			\cmidrule[\heavyrulewidth](lr){1-2} \cmidrule[\heavyrulewidth](lr){3-7}
			\multicolumn{2}{c}{m}              & m=1      & m=5      & m=20     & m=40     & m=50     \\ \toprule
			\multirow{3}{*}{ObjVal}&FPSA-nl& 1.89e-02 & 1.93e-02 & 2.08e-02 & 2.36e-02 & 2.55e-02 \\
			&PGSA\_BE& 1.89e-02 & 1.93e-02 & 2.08e-02 & 2.36e-02 & 2.55e-02 \\
			&e-PSG& 1.89e-02 & 1.93e-02 & 2.08e-02 & 2.36e-02 & 2.55e-02 \\ \hline
			\multirow{3}{*}{StatRes}&FPSA-nl& 1.84e-08 & 1.20e-06 & 9.40e-06 & 8.49e-05 & 3.47e-04 \\
			&PGSA\_BE& 1.63e-07 & 1.26e-06 & 9.40e-06 & 8.49e-05 & 3.47e-04 \\
			&e-PSG& 1.93e-07 & 1.32e-06 & 9.40e-06 & 8.49e-05 & 3.47e-04 \\ \hline
			\multirow{3}{*}{Infeas}&FPSA-nl& 4.60e-09 & 3.86e-09 & 4.12e-09 & 2.61e-09 & 2.24e-09 \\
			&PGSA\_BE& 4.40e-09 & 4.52e-09 & 3.50e-09 & 4.47e-09 & 3.56e-09 \\
			&e-PSG& 4.40e-09 & 4.52e-09 & 3.50e-09 & 4.47e-09 & 3.56e-09 \\ \hline
			\multirow{3}{*}{CPU}&FPSA-nl& 4.52e-02 & 4.56e-02 & 6.39e-02 & 7.44e-02 & 8.38e-02 \\
			&PGSA\_BE& 1.44e-01 & 1.58e-01 & 1.89e-01 & 2.66e-01 & 2.66e-01 \\
			&e-PSG& 1.68e-01 & 1.62e-01 & 1.92e-01 & 2.44e-01 & 2.56e-01 \\
			\toprule
			\multicolumn{2}{c}{n} &\multicolumn{5}{c}{n = 800}                                                               \\
			\cmidrule[\heavyrulewidth](lr){1-2} \cmidrule[\heavyrulewidth](lr){3-7}
			\multicolumn{2}{c}{m}                                 & m=4     & m=20    & m=80    & m=160   & m=200   \\ \toprule
			\multirow{3}{*}{ObjVal}&FPSA-nl& 4.70e-03 & 4.76e-03 & 5.17e-03 & 5.92e-03 & 6.55e-03 \\
			&PGSA\_BE& 4.70e-03 & 4.76e-03 & 5.17e-03 & 5.92e-03 & 6.55e-03 \\
			&e-PSG& 4.70e-03 & 4.76e-03 & 5.17e-03 & 5.92e-03 & 6.55e-03 \\ \hline
			\multirow{3}{*}{StatRes}&FPSA-nl& 1.09e-07 & 1.79e-06 & 1.44e-05 & 2.30e-04 & 1.76e-03 \\
			&PGSA\_BE& 5.04e-07 & 1.97e-06 & 1.44e-05 & 2.30e-04 & 1.76e-03 \\
			&e-PSG& 1.56e-07 & 1.80e-06 & 1.44e-05 & 2.30e-04 & 1.76e-03 \\ \hline
			\multirow{3}{*}{Infeas}&FPSA-nl& 2.86e-09 & 4.10e-09 & 5.99e-09 & 4.27e-09 & 4.10e-09 \\
			&PGSA\_BE& 4.80e-09 & 4.59e-09 & 3.71e-09 & 4.30e-09 & 2.79e-09 \\
			&e-PSG& 4.80e-09 & 4.59e-09 & 3.71e-09 & 4.30e-09 & 2.79e-09 \\ \hline
			\multirow{3}{*}{CPU}&FPSA-nl& 5.88e-01 & 7.19e-01 & 8.67e-01 & 8.66e-01 & 9.00e-01 \\
			&PGSA\_BE& 4.09e+00 & 4.29e+00 & 3.82e+00 & 4.88e+00 & 4.87e+00 \\
			&e-PSG& 3.41e+00 & 3.36e+00 & 3.39e+00 & 3.35e+00 & 3.23e+00 \\  \bottomrule
		\end{tabular}
		%}
\end{table}

 We tested different scenarios of
\begin{eqnarray*}
&&(m,n)= \nn\\&&\{(1,200), (5,200),(20,200), (40,200), (50,200), (4,800), (20,800), (80,800), (160,800), (200,800)\}
\end{eqnarray*}
and recorded the average results from 20 runs, in terms of the objective function value (ObjVal), StatRes, Infeas, and CPU time (CPU) for FPSA-nl, PGSA\_BE, and e-PSG in Table \ref{OPS_200}. As shown in the table, in all cases, FPSA-nl usually took less time to achieve almost the same ObjVal and Infeas, and much lower StatRes. We further illustrated the performance profile of FPSA-nl, PGSA\_BE, and e-PSG in terms of CPU time and StatRes in Figure \ref{PR-DW}. As mentioned before, the analysis involved 200 {\it  different instances}.

%Let $t_{p,j}$ denote a performance metric for the $j$th solver on problem $p$ where $j=1,2,3$ and $p=1,\ldots,200$.
% We calculate the ratio $r_{p,j}$ by dividing $t_{p,j}$ by the smallest value achieved by any of the $j$ solvers for problem $p$, i.e.,
%$r_{p,j}=\frac{t_{p,j} }{\min\{t_{p,j}:1\le j\le n_j\}}$.
%For a given threshold $\tau>0$, $\pi_j(\tau)$ is the ratio of the number of problems where $r_{p,j}\le\tau$ to the total number.
% This provides insight into whether solver $j$ performs within a factor of $\tau$ compared to the best-performing solver.

\begin{figure}[htbp]
	\vspace{0cm}\centering{\vspace{0cm}
		\begin{tabular}{cc}
			\includegraphics[scale = .45]{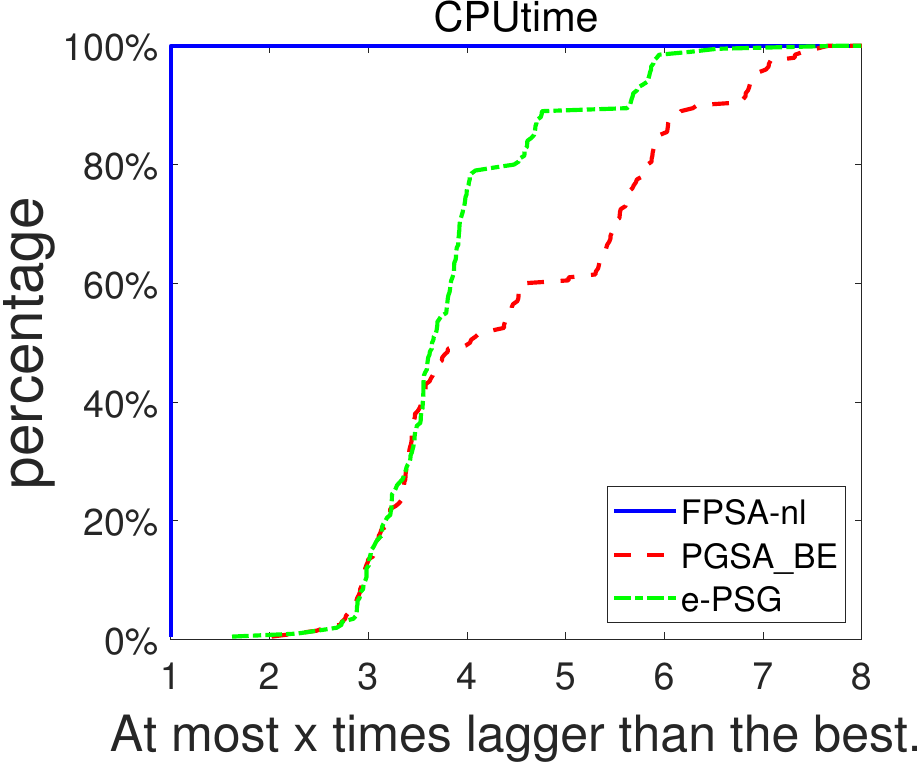}
			& \includegraphics[scale = .45]{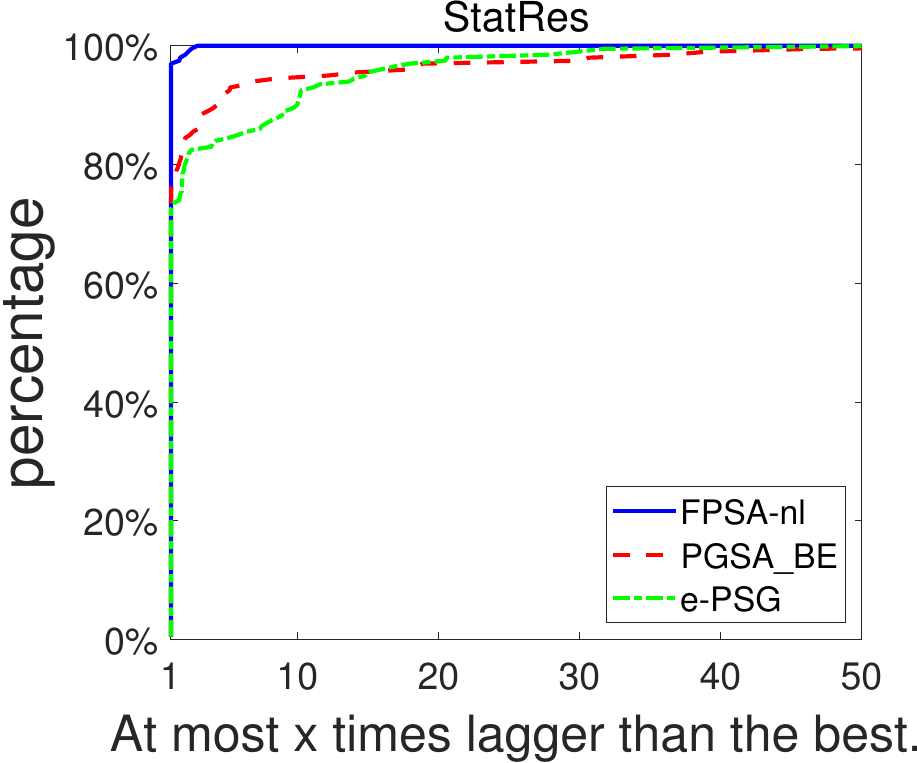}
		\end{tabular}
	}
	\caption{The performance profile of CPU time and StatRes.}
	\label{PR-DW}
\end{figure}

Figure \ref{PR-DW} presents the performance profiles for StatRes and CPU time, respectively. Notably, in each figure, the point of intersection between the `percentage' axis and the curve indicates the ratio at which the current/best solver stands among the FPSA-nl, PGSA\_BE, and e-PSG. Analysis of the curves demonstrates that for all cases, FPSA-nl has the smallest CPU time, and in nearly 80\% of the cases, the CPU time of e-PSG and PGSA\_BE is approximately at least three times that of FPSA-nl. Besides, in about 95\% of cases, the output of FPSA-nl has the lowest StatRes, demonstrating the high efficiency of FPSA-nl.

\section{Conclusions}\label{con}

This paper focused on solving a nonsmooth, nonconvex fractional programming problem with a special structure. The numerator consisted of a sum of a smooth function and a nonsmooth function, while the denominator was a composition of a nonsmooth convex function and a linear operator. We proposed a single-loop full-splitting algorithm with a relaxation step. The primary contribution of this paper was that the algorithm could achieve global convergence to an exact lifted stationary point under the suitable merit functions with the KL property. Furthermore, we combined a nonmonotone line search scheme to enhance the convergence speed and reduce the dependence on the unknown parameters. Finally, we validated the theoretical results and showcased the proposed algorithm's superiority over existing state-of-the-art methods in solving three concrete applications.

%\section*{Acknowledgments}
%\vspace{-0.4cm}
% The second author extends gratitude to Prof. Radu Ioan Bo\c{t} from the University of Vienna for his valuable discussions on Lemma 2 during her visit to him in 2023.
%
%\vspace{0.4cm}

\end{document}